\pgfplotsset{compat=newest}
\definecolor{teal}{rgb}{0.0, 0.5, 0.5}
\newcounter{mnotecount}[section]
\newcommand{\rmnote}[1]{}
\DeclareFontFamily{U}{mathb}{\hyphenchar\font45}
\DeclareFontShape{U}{mathb}{m}{n}{
      <5> <6> <7> <8> <9> <10> gen * mathb
      <10.95> mathb10 <12> <14.4> <17.28> <20.74> <24.88> mathb12
      }{}
\DeclareSymbolFont{mathb}{U}{mathb}{m}{n}
\let\dot\relax
\DeclareMathAccent{\dot}{0}{mathb}{"39}
\let\ddot\relax
\DeclareMathAccent{\ddot}{0}{mathb}{"3A}
\let\dddot\relax
\DeclareMathAccent{\dddot}{0}{mathb}{"3B}
\let\ddddot\relax
\DeclareMathAccent{\ddddot}{0}{mathb}{"3C}
\theoremstyle{plain}
\newtheorem*{theorem*}{Theorem}
\newtheorem{theorem}{Theorem}[section]
\newtheorem*{lemma*}{Lemma}
\newtheorem{lemma}[theorem]{Lemma}
\newtheorem*{assumption*}{Assumption}
\newtheorem*{proposition*}{Proposition}
\newtheorem{proposition}[theorem]{Proposition}
\newtheorem*{corollary*}{Corollary}
\newtheorem{corollary}[theorem]{Corollary}
\newtheorem*{claim*}{Claim}
\newtheorem*{conjecture*}{Conjecture}
\newtheorem*{question*}{Question}
\newtheorem*{result*}{Result}
\theoremstyle{definition}
\newtheorem*{definition*}{Definition}
\newtheorem{definition}[theorem]{Definition}
\newtheorem*{example*}{Example}
\newtheorem*{algorithm*}{Algorithm}
\newtheorem*{remark*}{Remark}
\newtheorem*{remarks*}{Remarks}
\newtheorem{remark}[theorem]{Remark}
\newtheorem*{convention*}{Convention}
\theoremstyle{plain}
\def\al{\alpha}
\def\be{\beta}
\def\ga{\gamma}
\def\de{\delta}
\def\ep{\epsilon}
\def\ze{\zeta}
\def\th{\theta}
\def\la{\lambda}
\def\rh{\rho}
\def\ta{\tau}
\def\vh{\varphi}
\def\Ga{\Gamma}
\def\Si{\Sigma}
\def\C{\mathbb{C}}
\def\N{\mathbb{N}}
\def\R{\mathbb{R}}
\def\Z{\mathbb{Z}}
\def\cC{\mathcal{C}}
\def\cF{\mathcal{F}}
\def\cH{\mathcal{H}}
\def\cL{\mathcal{L}}
\def\cR{\mathcal{R}}
\def\fd{\mathfrak{d}}
\def\fn{\mathfrak{n}}
\def\p{\partial}
\def\<{\langle}
\def\>{\rangle}
\renewcommand{\o}{\circ}
\def\CN{C_\nn}
\def\nn{\mathbf{N}}
\def\ol{\overline}
\let\on=\operatorname
\newcommand{\sr}[1]%
{\ifmmode{}^\dagger\else${}^\dagger$\fi\ifvmode
\vbox to 0pt{\vss
 \hbox to 0pt{\hskip\hsize\hskip1em
 \vbox{\hsize3cm\raggedright\pretolerance10000
 \noindent #1\hfill}\hss}\vss}\else
 \vadjust{\vbox to0pt{\vss%
 \hbox to 0pt{\hskip\hsize\hskip1em%
 \vbox{\hsize3cm\raggedright\pretolerance10000%
 \noindent #1\hfill}\hss}\vss}}\fi%
}
\providecommand{\mapsfrom}{\kern.2em%
\setbox0=\hbox{$\leftarrow$\kern-.10em\rule[0.26mm]{0.1mm}{1.3mm}}\box0%
\kern.3em}
\DeclareMathOperator*{\esssup}{ess\,sup}
\DeclareMathOperator*{\essinf}{ess\,inf}
\title[Tame properties of differentiable functions]
{Quantitative tame properties of differentiable functions with controlled derivatives}
\author[A.~Rainer]{Armin Rainer}
\address{Fakult\"at f\"ur Mathematik, Universit\"at Wien,
Oskar-Morgenstern-Platz~1, A-1090 Wien, Austria}
\email{armin.rainer@univie.ac.at}
\begin{document}

\begin{abstract}
We show that differentiable functions, defined on a convex body $K \subseteq \R^d$,
whose derivatives are controlled by 
a suitable given sequence of positive real numbers share many properties with polynomials.
The role of the degree of a polynomial is hereby played by an integer associated
with the given sequence of reals, the diameter of $K$, and a real parameter linked to the $\cC^0$-norm
of the function. We give quantitative information on the size of the zero set,
show that it admits a local parameterization by Sobolev functions,
and prove an inequality of Remez-type. From the latter, we deduce several consequences,
for instance, a bound on the volume of sublevel sets and a comparison of $L^p$-norms
reversing H\"older's inequality. The validity of many of the results
only depends on the derivatives up to some finite order; the order can be specified
in terms of the given data.
\end{abstract}

\thanks{Supported by FWF-Project P 32905-N}
\keywords{Differentiable functions, controlled derivatives, quasianalyticity, zero sets, level sets,
sublevel sets, Remez inequality, reverse H\"older inequality, Markov inequality, Bernstein inequality}
\subjclass[2020]{
  26D07,  
  26E10,  	
  41A17,    
  41A63,    
	58C25}  	
\date{\today}

\maketitle


\section{Introduction}

Yomdin showed in \cite{Yomdin:1984dg} that the zero set $Z_f$ of a non-zero
$\cC^\infty$-function $f : B \to \R$ defined on the
unit ball $B \subseteq \R^d$ behaves in many respects as the zero set of a polynomial,
if the partial derivatives of some order $j \ge 2$ are sufficiently small on $B$.
To be precise, let $\|f\|_{j,B} := \sum_{|\al|=j} \frac{j!}{\al!}\|f^{(\al)}\|_B$
and $\|f\|_B := \|f\|_{0,B} = \sup_{x \in B} |f(x)|$.
If, for some $j \ge 2$,
\begin{equation} \label{eq:Yomdin}
   \|f\|_{j,B} \le \frac{1}{2^{j+1}} \|f\|_{0,B},
\end{equation}
then $Z_f$ has the following properties:
\begin{enumerate}
  \item[(a)] There is some ball $B' \subseteq B$ such that for each affine line $\ell$ in $\R^d$
  that meets $B'$ the restriction $f|_\ell$ has at most $j-1$ zeros (counted with multiplicities).
  \item[(b)] $Z_f$ is contained in a countable union of compact $\cC^\infty$-hypersurfaces.
  \item[(c)] The $(d-1)$-dimensional Hausdorff measure of $Z_f$ satisfies
  \begin{equation*}
     \cH^{d-1}(Z_f) \le C(d,j),
  \end{equation*}
  where $C(d,j)$ is a positive constant depending only on $d$ and $j$.
\end{enumerate}
An important fact (which is needed to get (c)) is that the radius of $B'$ depends only on $j$.

This result is in drastic contrast to the classical result that any closed subset of $\R^d$
is the zero set of some $\cC^\infty$-function.
In fact, by (a), a non-zero function satisfying \eqref{eq:Yomdin} cannot have points of infinite flatness.

In this paper, we will show that the zero sets of smooth functions with a different type of
constraints have similar quantitative tame properties.
Instead of assuming that the Fr\'echet derivative of \emph{some} order is ``small'', as in \eqref{eq:Yomdin},
we ask that the growth of the sequence of derivatives of \emph{all} orders is controlled.

In a spirit similar to \cite{Yomdin:1984dg},
Yomdin proved in \cite{Yomdin:2014aa} a Remez-type inequality for smooth functions involving a ``remainder term''
expressible through bounds on the derivatives.
We will prove a Remez-type inequality (without ``remainder term'') and
deduce several consequences for smooth functions with controlled
derivatives.

\subsection{Functions with controlled derivatives}

Let a positive increasing sequence $(\mu_j)_{j \ge 1}$ and a positive real number $M_0>0$ be given and set
$M_j:= M_0 \mu_1 \mu_2 \cdots \mu_j$ for all $j\ge 1$.
Let $K \subseteq \R^d$ be a $d$-dimensional convex body (i.e., compact with non-empty interior $K^\o$).
We will study $\cC^\infty$-functions $f : K \to \R$ such that
\begin{equation} \label{eq:DC}
   \|f\|_{j,K} \le M_j, \quad j \in \N.
\end{equation}
Now it is well-known that the qualitative behavior of functions satisfying \eqref{eq:DC}
--- let us call them \emph{$(M_j)_j$-smooth functions} ---
fundamentally depends on the convergence or divergence of the series $\sum_{j} \frac{1}{\mu_j}$ (the
\emph{quasianalyticity threshold}).
The series $\sum_{j} \frac{1}{\mu_j}$ and its partial sums play a central role in our
quantitative analysis. In fact, this analysis is based on a quantity $\fd_{(\de_K \mu_j)_j}(b)$
which depends on the sequence $(\de_K \mu_j)_j$ and an additional parameter $b>0$.
Here $\de_K:= \on{diam}(K)$ is the diameter of $K$.
Roughly speaking, $\fd_{(\de_K \mu_j)_j}(b)$ is the greatest integer $n$ (possibly infinite) such that
\[
\sum_{j = j_0(b)+1}^n \frac{1}{\mu_j}< \de_K e,
\]
where the integer $j_0(b)$ is explicitly computed from $b$; see \Cref{sec:terminology} for precise definitions.
We call $\fd_{(\de_K\mu_j)_j}(b)$ the \emph{$(\de_K \mu_j)_j$-degree}, since it has similar properties as the degree of
a polynomial. Note that it is closely related to the \emph{Bang degree} introduced by
Nazarov, Sodin, and Volberg \cite{NazarovSodinVolberg04}.

The $(\de_K \mu_j)_j$-degree $\fd_{(\de_K \mu_j)_j}(b)$ is finite if $\sum_{j \ge j_0(b)+1} \frac{1}{\mu_j}$ exceeds $\de_K e$
which is always the case provided that the series $\sum_{j} \frac{1}{\mu_j}$ diverges.
But also if $\sum_{j} \frac{1}{\mu_j}$ converges to a large enough sum,
$\fd_{(\de_K \mu_j)_j}(b)$ is finite and contains useful information on functions satisfying \eqref{eq:DC}.
In fact, this may occur if $\de_K$ is relatively small.

\subsection{The zero set of functions with controlled derivatives}

We will see that the zero set $Z_f$ of $(M_j)_j$-smooth functions $f : K \to \R$
has properties similar to (a), (b), and (c) whenever the $(\de_K \mu_j)_j$-degree
$\fd_{(\de_K \mu_j)_j}(\tfrac{b}{2M_0})$ is finite.
The parameter $b>0$ hereby acts as a lower bound on the $C^0$-norm of $f$, that is, we require that $\|f\|_K \ge b$.

Let us summarize our results in the following statements (A)--(D):
\begin{enumerate}
  \item[(A)] There is a ball $B$ contained in the interior $K^\o$ of $K$ 
      whose radius depends only on $K$ and
  the ratio $\frac{b}{M_1}$
  such that for each affine line $\ell$ in $\R^d$
  that meets $B$ the restriction $f|_\ell$ has at most $2 \fd_{(\de_K \mu_j)_j}(\tfrac{b}{2M_0})$ zeros (counted with multiplicities).
  (\Cref{prop:numberofzeros})
\end{enumerate}
The key to this result is a bound for the number of zeros of $(M_j)_j$-smooth univariate functions which goes back to
Bang \cite{Bang53}; see also \cite{NazarovSodinVolberg04} for an
exposition of Bang's ideas. We revisit Bang's result in some detail in \Cref{sec:univariate}.

As a consequence of Malgrange's preparation theorem and Yomdin's observation \cite[Lemma 6]{Yomdin:1984dg}
we obtain:
\begin{enumerate}
  \item[(B)] $Z_f$ is contained in a countable union of compact $\cC^\infty$-hypersurfaces. (\Cref{prop:Hausdorff})
\end{enumerate}
Then, using a Crofton-type argument, (A) and (B) allow to conclude:
\begin{enumerate}
    \item[(C)] The $(d-1)$-dimensional Hausdorff measure of $Z_f$ satisfies
    \begin{equation*}
       \cH^{d-1}(Z_f) \le C\, \fd_{(\de_K \mu_j)_j}(\tfrac{b}{2M_0}) \de_K^{d-1},
    \end{equation*}
    where the constant $C>0$ depends only on $d$ and the ratio $\frac{\de_B}{\de_K}$;
    $B$ is the ball from (A). (\Cref{prop:Hausdorff})
\end{enumerate}
Combining Malgrange's preparation theorem with results of Parusi\'nski and Rainer \cite{ParusinskiRainer15},
we find that $Z_f$ locally admits a Sobolev parameterization:
\begin{enumerate}
  \item[(D)] There is a finite cover of $K$ by rectangular boxes $U$ with corresponding orthogonal
  coordinates $(x_1,x_2,\ldots,x_d) = (x',x_d)$
  such that $Z_f \cap U$ is contained in the graphs $\{x_d = \xi_i(x')\}$
  of at most $2 \fd_{(\de_K \mu_j)_j}(\tfrac{b}{2M_0})$ continuous functions $\xi_i$
  of Sobolev class $W^{1,p}$, for all $1 \le p < p_0$, where
  \[
    p_0 := \frac{2 \fd_{(\de_K\mu_j)_j}(\tfrac{b}{2M_0})}{2 \fd_{(\de_K\mu_j)_j}(\tfrac{b}{2M_0})-1}.
  \]
  (\Cref{prop:Sobolev})
\end{enumerate}
That means that the partial derivatives $\p_k \xi_i$ of first order of $\xi_i$ exist almost everywhere,
agree with the weak partial derivatives, and $\xi_i$ as well as $\p_k \xi_i$ are in the Lebesgue space $L^p$.

As a by-product, our reasoning shows that (D) remains true for $\cC^\infty$-functions satisfying \eqref{eq:Yomdin}
for some $j\ge 2$ with the role of $2 \fd_{(\de_K \mu_j)_j}(\tfrac{b}{2M_0})$
replaced by $j-1$.

We discuss uniformity of these results at the end of \Cref{sec:multidim}.

\subsection{A Remez-type inequality for functions with controlled derivatives}

The classical Remez inequality \cite{Remes:1936wl} states that for a Lebesgue measurable subset $E$ of $[0,1]$ with
Lebesgue measure $|E|>0$
we have
\[
  \|p\|_{[0,1]} \le T_n\Big(\frac{2-|E|}{|E|}\Big)\, \|p\|_E
\]
for all polynomials $p \in \R[x]$ with degree $\deg p \le n$.
Here $T_n(x)= \cos (n \arccos x)$ is the $n$-th Chebyshev polynomial.
In higher dimensions, there is the generalization due to Yu.\ Brudnyi and Ganzburg \cite{Brudnyi:1973un}:
for a convex body $K \subseteq \R^d$ and a Lebesgue measurable subset $E\subseteq K$ with $|E|>0$
we have the sharp inequality
\[
  \|p\|_{K} \le T_n\Big(\frac{1+(1-\frac{|E|}{|K|})^{1/d}}{1-(1-\frac{|E|}{|K|})^{1/d}}\Big)\, \|p\|_E
\]
for all polynomials $p \in \R[x_1,\ldots,x_d]$ with degree $\deg p \le n$. It implies
\[
  \|p\|_{K} \le \Big(\frac{4d\, |K|}{|E|}\Big)^n \, \|p\|_E.
\]
A version of the latter inequality for analytic functions was obtained by A.\ Brudnyi \cite{Brudnyi_1999}, where the role of $n$
is played by the \emph{analytic degree}; cf.\ \Cref{sec:analyticdegree}.
As already mentioned there is a version for smooth function due to \cite{Yomdin:2014aa}
which however contains a ``remainder term''.

In this paper, we are interested in Remez-type inequalities for $(M_j)_j$-smooth functions $f : K \to \R$
(without ``remainder term'').
On the unit interval $K=[0,1]$, such an inequality was obtained in \cite[Theorem B]{NazarovSodinVolberg04}.
We recall and slightly adjust this result in \Cref{thm:NSV}.
The crucial difference is that, instead of the Bang degree, we work with the $(\de_K \mu_j)_j$-degree
$\fd_{(\de_K \mu_j)_j}(\tfrac{b}{M_0})$
which in a sense allows more ``leeway'': it works for all  $(M_j)_j$-smooth functions $f : K \to \R$ with $\|f\|_K \ge b$.
(In contrast to above, there is no factor $2$ in the denominator.)

In this way, we are able to apply the strategy of restriction to $1$-dimensional sections of Brudnyi and Ganzburg
and obtain a Remez-type inequality for $(M_j)_j$-smooth functions in several variables (\Cref{thm:Remez}).
Actually, the Remez-type inequality is invariant under the $\R^*$-action on $f$ by multiplication with non-zero constants.
This observation leads to a useful version (\Cref{cor:Remez2}) 
for $(M_j)_j$-smooth functions $f$, where $M_j = \|f\|_K \cdot \mu_1 \cdots \mu_j$, for $j \ge 1$, and $M_0=\|f\|_K>0$,
and with $(\de_K \mu_j)_j$-degree $\fd_{(\de_K \mu_j)_j}(1)$.

From this version, we deduce (in a standard way, see for instance
\cite{Brudnyi:1999tb,Brudnyi_1999} and \cite{Brudnyi:1973un})
a number of consequences for functions with controlled derivatives:
\begin{enumerate}
  \item A bound for the volume of sublevel sets. (\Cref{cor:Remez3})
  \item A comparison of $L^p$-norms (reversing H\"older's inequality). (\Cref{cor:Remez4})
  \item A bound for the mean oscillation of $\log |f|$. (\Cref{cor:Remez6} and \Cref{cor:BMO})
\end{enumerate}

Generally, we deduce the main multivariate results from respective univariate ones by 
restriction to affine lines. See \Cref{rem:sectioning} for how this relates to 
results of Bochnak--Siciak type for controlled functions.

\subsection{Complementary results}

In \Cref{sec:polynomialdegree} and \Cref{sec:analyticdegree}, we attempt to clarify the relation between
the $(\mu_j)_j$-degree on the one hand and polynomial and analytic degree on the other hand.
The $(\mu_j)_j$-degree is a more general object, but in a polynomial or analytic setting it is generally
larger than the polynomial or analytic degree, respectively. This is related to the general assumption
that $(\mu_j)_j$ is increasing, which is crucial for many of the results.

As already mentioned,
the $(\mu_j)_j$-degree can be finite also in the \emph{non-quasianalytic} setting (i.e., when $\sum_j \frac{1}{\mu_j}$ converges).
In fact, we obtain in \Cref{cor:cpsupp} a quantitative necessary condition for the existence of
$\cC^\infty$-functions with uniform bounds,
like \eqref{eq:DC}, and compactly supported in
a given ball, in terms of the size of the ball and the $C^0$-norm of the function.
For completeness, we sketch a proof of the fact that, if $\sum_j \frac{1}{\mu_j}< \infty$,
each non-empty closed subset of $\R^d$ is the zero set of a
$\cC^\infty$-function $f$ such that $\|f\|_{j,\R^d} \le A^{j+1} M_j$ for all $j \in \N$ and some $A>0$ (\Cref{rem:complement}).

Bang's univariate result, i.e.\ \Cref{thm:Bang}, not only contains useful information on the numbers of zeros
but also on the locus of points, where derivatives of higher order vanish.
For instance, we get a uniform bound for the number of critical points on certain affine lines (\Cref{prop:critical}).

\subsection{Finite determinacy}

So far we only considered $\cC^\infty$-functions with controlled derivatives of all orders.
Actually, many of our results depend only on the derivatives up to some finite order
and remain true even for functions that are only differentiable up to said finite order.
We will make this precise below, in particular, we will specify the finite order.
To account for this fact, we introduce a customized terminology in \Cref{sec:terminology},
\Cref{sec:finitedeterminacy}, and \Cref{sec:homogeneous}.

\subsection{Notation}

We often write $|E|$ for the Lebesgue measure $\cL^d(E)$ of a measurable set $E \subseteq \R^d$.
If not stated otherwise, ``measurable'' always means ``Lebesgue measurable''.
The $k$-dimensional Hausdorff measure of $E$ is denoted by $\cH^k(E)$.

Let $B_r(a) := \{x \in \R^d : |x-a|<r\}$
and $\ol B_r(a) := \{x \in \R^d : |x-a|\le r\}$ denote the open and closed Euclidean ball
in $\R^d$ centered at $a$ with radius $r>0$, respectively.
If $a$ is the origin, we simply write
$B_r := B_r(0)$ and $\ol B_r := \ol B_r(0)$.

Throughout the paper, we write $\de_K := \on{diam}(K)$ for the diameter of a set $K \subseteq \R^d$.
We denote by $\ol K$, $K^\o$, and $\p K$ the closure, the interior, and the boundary of $K$ in $\R^d$, respectively.

The integral part of a real number $x$ is denoted by $\lfloor x\rfloor := \max\{n \in \Z : n \le x\}$.
Similarly, $\lceil x\rceil := \min\{n \in \Z : n \ge x\}$, but most of the time we will use
$\lceil x\rceil_{\N} := \min\{n \in \N : n \ge x\}$, where $\N := \Z_{\ge 0} := \{n \in \Z : n\ge 0\}$.
Similarly, we use
$\N_{\ge m} := \{n \in \N : n\ge m\}$,
$\R_{>a} := \{x \in \R : x>a\}$, and variations thereof.

\section{$M^{|N}$-smooth functions and $\mu^{|N}$-degree} \label{sec:terminology}

\subsection{Admissible weights}

Let $(\mu_j)_{j \ge 1}$ be an infinite increasing sequence of elements in $\R_{>0} \cup \{\infty\}$, i.e.,
$0< \mu_1 \le \mu_2 \le \cdots$. We allow that sequence elements attain the value $\infty$.
By the requirement that the sequence is increasing,
if $\mu_{j_0} = \infty$, then $\mu_{j} = \infty$ for all $j \ge j_0$.
We say that such a sequence $(\mu_k)_{k \ge 1}$ is an \emph{admissible weight}.
It will be convenient to keep track of the index (if any), where the sequence ceases to be finite. So we write
\begin{align*}
  \mu^{|N} &:= (\mu_j)_{j\ge 1} = (\mu_1,\ldots,\mu_N,\infty,\infty,\ldots) \quad \text{ if } \mu_N <\infty
  = \mu_{N+1} \text{ for } N \ge 1,
  \\
  \mu^{|0} &:= (\mu_j)_{j\ge 1} = (\infty,\infty,\ldots) \quad \text{ if } \mu_j =\infty \text{ for all } j \ge 1,
  \\
  \mu^{|\infty} &:= (\mu_j)_{j\ge 1}  \quad \text{ if } \mu_j <\infty \text{ for all } j \ge 1.
\end{align*}
From now on,
we let $N$ be an element of $\N_{\ge 1} \cup \{\infty\}$ so that the notation $\mu^{|N}$ also includes the case $\mu^{|\infty}$.
(We exclude the case $N=0$, where the weight is worthless.)
Let us adopt the usual conventions for the arithmetic with $\infty$:
$\infty \pm 1 = \infty$, $\frac{1}{\infty}=0$, and $r \cdot \infty = \infty$ if $r\in \R_{>0} \cup \{\infty\}$.

If $\mu^{|N}$ is an admissible weight and $M_0>0$ is any positive real number, then
we call the pair $(\mu^{|N},M_0)$ a \emph{full admissible weight}.
With the full admissible weight $(\mu^{|N},M_0)$ we associate a sequence $M^{|N} = (M_j)_{j\ge 0}$ by setting
\[
  M_j := M_0 \mu_1 \mu_2 \cdots \mu_j, \quad  j \ge 1.
\]
Then $M_j = \infty$ if $N < \infty$ and $j > N$.
That $\mu^{|N}$ is increasing amounts to the property $M^2_j \le M_{j-1} M_{j+1}$ for $j \ge 1$.
Given a sequence $M^{|N} = (M_j)_{j\ge 0}$ with this property, we may recover the pair $(\mu^{|N},M_0)$ in a unique way by
\[
  \mu_j := \frac{M_j}{M_{j-1}} \quad \text{ for } 1\le j \le N \quad \text{ and } \quad \mu_j := \infty \quad \text{ for } j >N,
\]
if $N <\infty$, and simply by $\mu_j := \frac{M_j}{M_{j-1}}$ for all $j \ge 1$ otherwise.
Thus there is a one-to-one correspondence between $(\mu^{|N},M_0)$ and $M^{|N}$.
We also call $M^{|N}$ a \emph{full admissible weight}.

Notice that, given a full admissible weight $(\mu^{|N},M_0)$ and positive constants $r,C>0$,
the pair $(r\mu^{|N},CM_0) = ((r\mu_j)_{j \ge 1},CM_0)$ is again a full admissible weight;
it corresponds to $(C r^j M_j)_{j\ge 0}$.

\subsection{$M^{|N}$-smooth functions} \label{ssec:Msmooth}

Let $M^{|N}$ be a full admissible weight.
Let $K \subseteq \R^d$ be a convex body and $f : K \to \R$ a function.
We say that $f$ is \emph{$M^{|N}$-smooth} (or \emph{$(\mu^{|N},M_0)$-smooth}) if
$f$ is of class $\cC^{N}$ (in an open neighborhood of $K$) and
\begin{equation} \label{eq:Bd1}
   \|f\|_{j,K} \le M_j, \quad 0 \le j < N+1,
\end{equation}
where we set
\[
\|f\|_{j,K} := \sum_{|\al|=j} \frac{j!}{\al!} \|f^{(\al)}\|_K
\]
and $\|f\|_K = \|f\|_{0,K} = \sup_{x \in K} |f(x)|$ denotes the sup-norm.
If $N=\infty$, this means that $f$ is of class $\cC^\infty$ and $\|f\|_{j,K} \le M_j$ for all $j \in \N$.

\subsection{The $\mu^{|N}$-degree} \label{sec:degree}

Let $\mu^{|N}$ be an admissible weight.
We consider the function $\Si_{\mu^{|N}} : \N_{\ge 1} \times \N_{\ge 1}  \to [0,\infty)$
defined by
\[
  \Si_{\mu^{|N}}(m,n) := \sum_{j =m}^n \frac{1}{\mu_j}
\]
and extend the definition to $(\N_{\ge 1} \cup \{\infty\}) \times (\N \cup \{\infty\})$ by setting
\begin{align*}
      \Si_{\mu^{|N}}(m,n) :=
      \begin{cases}
        0 & \text{ if } 1 \le m \le \infty,~ n = 0,
        \\
        \sum_{j =m}^\infty \frac{1}{\mu_j} & \text{ if } 1 \le m < \infty,~ n = \infty,
        \\
        0 &  \text{ if } m = \infty, ~ 1 \le n \le \infty.
      \end{cases}
\end{align*}
Note that, if $m> n$, then $\Si_{\mu^{|N}}(m,n)$ is an empty sum and hence has the value $0$.
For $n \ge N$, we have $\Si_{\mu^{|N}}(m,n) = \Si_{\mu^{|N}}(m,N)$ (since $\frac{1}{\infty}=0$).
The map $(m,n)\mapsto \Si_{\mu^{|N}}(m,n)$ is increasing in $n$
and decreasing in $m$ (even strictly in the range, where $\mu_j$ is finite).

For $b>0$ let $j_0(b)$ be the smallest integer $j\in \N$ with $j \ge \log b^{-1}$, i.e.,
\[
  j_0(b):= \lceil \log b^{-1}\rceil_{\N}.
\]

We define the \emph{$\mu^{|N}$-degree}
$\fd_{\mu^{|N}}(b) \in \N \cup \{\infty\}$
by setting
\begin{align*}
  \fd_{\mu^{|N}}(b) := \sup\big\{n \in \N:  \Si_{\mu^{|N}}(j_0(b)+1,n)
  < e \big\}.
\end{align*}
We remark that $\fd_{\mu^{|N}}(b) \ge j_0(b)$
and $\fd_{\mu^{|N}}(b) = j_0(b)$ occurs precisely if $\mu_{j_0(b)+1}\le  1/e$.
Clearly, $\fd_{\mu^{|N}}(b) = \infty$ if $\Si_{\mu^{|N}}(j_0(b)+1,n) < e$ for all $n$.
The map $\R_{>0} \times \R_{>0} \ni (a,b) \mapsto \fd_{a\mu^{|N}}(b)$ is increasing in $a$ and decreasing in $b$.

  We could assign a $\mu^{|N}$-degree to a $(\mu^{|N},M_0)$-smooth function $f : [0,1] \to \R$ by
  putting
  \[
     \mu^{|N}\text{-degree of } f  := \fd_{\mu^{|N}}(\tfrac{\|f\|_{[0,1]}}{M_0});
  \]
  like the \emph{Bang degree} in \cite{NazarovSodinVolberg04}.
  But the unspecified argument $b$ allows for more flexibility;
  it will however always be related to the ratio of the $\cC^0$-norm of $f$ by $M_0$.

If the domain $K$ of $f$ is a $d$-dimensional convex body,
then, in our results, the diameter $\de_K$ of the domain $K$ (not the dimension!)
will affect the $\mu^{|N}$-degree as a multiplicative factor of the sequence $\mu^{|N}$, i.e.,
\[
  \fd_{\de_K \mu^{|N}}(b).
\]
See, for instance, the proof of \Cref{prop:numberofzeros},
where the function defined on $K$ is restricted to affine lines and 
a rescaling allows to consider it on the interval $[0,1]$.

\section{Zeros of univariate functions} \label{sec:univariate}

In this section, we revisit some results of Bang \cite{Bang53} (see also \cite{NazarovSodinVolberg04}).
Since we have to rephrase them in our terminology, we give detailed proofs.

\subsection{Bang's metric theory revisited}

Let $M^{|N}= (M_j)_{j\ge 0}$ be a full admissible weight.
Let $I \subseteq \R$ be a non-trivial compact interval and
$f : I \to \R$ an $M^{|N}$-smooth function.

Following Bang \cite{Bang53}, we associate with $f$ and $M^{|N}$ a sequence $(b_n)_{n\ge 0}$ of functions
defined by
\begin{equation} \label{eq:Bn}
   b_{n}(t) = b_{f,M^{|N},n}(t) :=  \sup_{j \ge n} \frac{|f^{(j)}(t)|}{e^j M_j}, \quad t \in I,
\end{equation}
with the interpretation that $\frac{|f^{(j)}(t)|}{e^j M_j} = 0$ if $M_j = \infty$ (even if $f^{(j)}(t)$ is not defined).
In other words, if $N$ is finite, then
$b_{n}(t) = \sup_{n \le j \le N} \frac{|f^{(j)}(t)|}{e^j M_j}$ for $n\le N$ and
$b_n \equiv 0$ for $n >N$.

\begin{lemma} \label{lem:Bang}
The sequence $(b_n)_{n \ge 0}$ has the following properties.
  \begin{enumerate}
    \item[(i)] $e^{-n} \ge b_{n}$ for all $n \ge 0$.
    \item[(ii)]$b_{n-1} \ge b_n$ and $f^{(n-1)}(t_0) = 0$ implies $b_{n-1}(t_0) = b_{n}(t_0)$ for all $n \ge 1$.
    \item[(iii)] For all $k>n$
    and all distinct $t, s \in I$,
    \begin{equation} \label{eq:iii}
       b_{n}(s) < \max \{b_{n}(t) , e^{-k} \}\, e^{e |t-s| \mu_k},
    \end{equation}
    where the right-hand side is interpreted as $\infty$ if $\mu_k=\infty$.
    In particular, all $b_n$ are continuous.
  \end{enumerate}
\end{lemma}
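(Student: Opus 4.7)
The plan is to treat the three items in sequence, with (i) and (ii) following directly from the definition, and (iii) as the technical core.

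For (i), the $M^{|N}$-smoothness hypothesis gives $|f^{(j)}(t)| \leq \|f^{(j)}\|_I = \|f\|_{j,I} \leq M_j$ for $0 \leq j \leq N$; under the convention that the quotient vanishes for $j > N$, this yields $\frac{|f^{(j)}(t)|}{e^j M_j} \leq e^{-j} \leq e^{-n}$ for all $j \geq n$, so $b_n(t) \leq e^{-n}$. For (ii), the monotonicity $b_{n-1} \geq b_n$ is immediate because the supremum for $b_{n-1}$ ranges over a strictly larger index set. If $f^{(n-1)}(t_0) = 0$, then the one extra term at $j = n-1$ in that supremum equals $0$, which is dominated by $b_n(t_0) \geq 0$; hence $b_{n-1}(t_0) = b_n(t_0)$.

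For (iii), I would assume $\mu_k < \infty$ (the statement being vacuous otherwise, which in particular forces $k \leq N$). For each $j$ with $n \leq j < k$, Taylor-expand $f^{(j)}(s)$ around $t$ to order $k-j-1$ with integral remainder
\[
f^{(j)}(s) = \sum_{i=0}^{k-j-1} \frac{(s-t)^i}{i!} f^{(j+i)}(t) + \int_t^s \frac{(s-u)^{k-j-1}}{(k-j-1)!} f^{(k)}(u)\, du.
\]
Then estimate $|f^{(j+i)}(t)| \leq e^{j+i} M_{j+i}\, b_n(t)$ (using $j+i \geq n$ together with $b_n \geq b_{j+i}$) and $\|f^{(k)}\|_I \leq M_k$. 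Dividing through by $e^j M_j$ and invoking the telescoping identity $\frac{M_{j+i}}{M_j} = \mu_{j+1} \cdots \mu_{j+i} \leq \mu_k^i$---which crucially uses that $(\mu_j)_j$ is nondecreasing---one arrives at
\[
\frac{|f^{(j)}(s)|}{e^j M_j} \leq b_n(t) \sum_{i=0}^{k-j-1} \frac{x^i}{i!} + e^{-k}\, \frac{x^{k-j}}{(k-j)!}, \qquad x := e|s-t|\mu_k.
\]
Since $\max\{b_n(t), e^{-k}\}\, e^x = \max\{b_n(t), e^{-k}\} \sum_{i=0}^\infty \frac{x^i}{i!}$ and $e^{-k} > 0$, the missing tail $\sum_{i \geq k-j+1} \frac{x^i}{i!}$ is strictly positive for $s \neq t$, which upgrades the comparison to a strict inequality. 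For the remaining indices $j \geq k$ in the supremum for $b_n(s)$, the crude bound $\frac{|f^{(j)}(s)|}{e^j M_j} \leq e^{-j} \leq e^{-k}$ combined with $e^x > 1$ (since $s \neq t$ and $\mu_k > 0$) handles those terms as well.

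Taking the supremum over $j \geq n$ yields (iii). Continuity of $b_n$ then follows at once: when $N < \infty$, $b_n$ is the pointwise maximum of the finitely many continuous functions $\frac{|f^{(j)}|}{e^j M_j}$ for $n \leq j \leq N$; when $N = \infty$, one applies (iii) symmetrically in the pair $(t,s)$ with $k$ chosen large enough that $e^{-k}$ is arbitrarily small, then shrinks $|s-t|$ so that $e^{e|s-t|\mu_k}$ is arbitrarily close to $1$. The main obstacle in the whole argument is the bookkeeping in (iii): setting up the Taylor expansion at exactly the right order to separate the $b_n(t)$-contribution from the $e^{-k}$-contribution, and exploiting monotonicity of $(\mu_j)_j$ to collapse the ratios $M_{j+i}/M_j$ into the single factor $\mu_k^i$ so that the two contributions recombine cleanly into the truncated and tail parts of the series for $e^x$.
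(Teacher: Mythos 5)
Your proof is correct and follows essentially the same route as the paper: (i) and (ii) read off from the definition, (iii) via Taylor's theorem at $t$ up to order $k-j-1$ together with monotonicity of $(\mu_j)_j$ to collapse $M_{j+i}/M_j$ into $\mu_k^i$ and recombine the two contributions into the truncated and tail parts of the series for $e^x$, and continuity deduced from (iii). The only cosmetic differences are the choice of the integral form of the remainder in place of the Lagrange form, and handling continuity in the finite-$N$ case as a pointwise maximum of finitely many continuous functions rather than splitting into the cases $n<N$, $n=N$, $n>N$.
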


\begin{proof}
  (i) and (ii) are obvious (by \eqref{eq:Bd1}).

  Let us prove (iii).
  Now \eqref{eq:iii} is trivial if $N$ is finite and $k>N$ which means that $\mu_k =\infty$.
  So we may assume that $k< N+1$ and that $\mu_k$ and $M_k$ are finite.
  Let $n \le j < k$ and $t, s \in I$.
  Then, by Taylor's formula, for some $\xi$ between $t$ and $s$,
  \begin{align*}
    \frac{|f^{(j)}(s)|}{e^j M_j}
    &\le \sum_{i = 0}^{k-j-1} \frac{|f^{(j+i)}(t)|\, |t-s|^i}{e^j M_j\, i!}
    + \frac{ |f^{(k)}(\xi)|\, |t-s|^{k-j}}{e^j M_j\, (k-j)!}  \\
    &= \sum_{i = 0}^{k-j-1} \frac{M_{j+i}}{M_j}\frac{|f^{(j+i)}(t)| }{e^{j+i} M_{j+i}} \frac{(e|t-s|)^i}{i!}
    + e^{-k} \frac{M_{k}}{M_j} \frac{ |f^{(k)}(\xi)|}{M_k} \frac{ (e|t-s|)^{k-j}}{(k-j)!} \\
    &\le b_{n}(t) \sum_{i = 0}^{k-j-1} \mu_k^i \frac{(e|t-s|)^i}{i!}
    + e^{-k} \mu_k^{k-j}  \frac{ (e|t-s|)^{k-j}}{(k-j)!} \\
    &< \max\{ b_{n}(t) , e^{-k} \}\, e^{e |t-s| \mu_k},
  \end{align*}
  where we used that $(\mu_j)_j$ is increasing.
  If $j \ge k$, then trivially
  \begin{align*}
    \frac{|f^{(j)}(s)|}{e^j M_j} \le e^{-j} < \max\{ b_{n}(t), e^{-k} \} \, e^{e |t-s| \mu_k}.
  \end{align*}
  This implies \eqref{eq:iii}.

  To see the continuity of the $b_n$, we treat separately the cases $N<\infty$ and $N=\infty$.
  First, if $N$ is finite, then $b_n$, for $n \le N$, is a maximum of finitely many continuous functions and $b_n \equiv 0$, for $n >N$; 
  thus continuity is clear.
  Second, if $N= \infty$, then
  continuity of $b_n$, for all $n \ge 0$, follows from \eqref{eq:iii}:
  fix $t \in I$ and a sequence $t_\nu \to t$ (with $t_\nu \ne t$) in $I$.  
  By \eqref{eq:iii}, for each $k > n$ and all $\nu$, 
  \[
       b_{n}(t_\nu) < \max \{b_{n}(t) , e^{-k} \}\, e^{e |t-t_\nu| \mu_k},
  \]
  whence
  \[
      \limsup_{\nu \to \infty} b_{n}(t_\nu) \le \max \{b_{n}(t) , e^{-k} \}.
  \]
  Since this holds for all $k>n$, we have 
  \[
      \limsup_{\nu \to \infty} b_{n}(t_\nu) \le b_n(t).
  \]
  Again by \eqref{eq:iii}, for each $k > n$ and all $\nu$, 
  \[
       b_{n}(t) < \max \{b_{n}(t_\nu) , e^{-k} \}\, e^{e |t-t_\nu| \mu_k},
  \]
  and so we find 
  \[
       b_{n}(t) \le \max \{\liminf_{\nu \to \infty} b_{n}(t_\nu) , e^{-k} \}
  \]
  for all $k>n$, and therefore
  \[
        b_{n}(t) \le \liminf_{\nu \to \infty} b_{n}(t_\nu). 
  \]
  It follows that $\lim_{\nu \to \infty} b_n(t_\nu) = b_n(t)$.  
\end{proof}

The following proposition is due to \cite{Bang53}; we implement several modifications.

 \begin{proposition} \label{thm:Bang}
   Let $M^{|N}= (M_j)_{j \ge0}$ be a full admissible weight.
   Let $I \subseteq \R$ be a non-trivial compact interval and
   $f : I \to \R$ an $M^{|N}$-smooth function.
   Let $m \in \N$ be such that $m+1\le N$.
   Assume that for all $0\le j \le m$ there is $x_j \in I$ such that $f^{(j)}(x_j) = 0$.
   Let $x_{-1}$ be an arbitrary point in $I$.
   Then,
 	\begin{equation} \label{eq:assertion}
 		\sum_{j=0}^m |x_{j-1} -x_j| \ge \frac{1}{e}\Si_{\mu^{|N}}(j_0+1,m+1),
 	\end{equation}
  where
  \begin{equation} \label{eq:k0}
    j_0 = j_0(b_{f,M^{|N},0}(x_{-1})) = \lceil \log b_{f,M^{|N},0}(x_{-1})^{-1} \rceil_{\N}.
  \end{equation}
  If $j_0\le m$, then the inequality \eqref{eq:assertion} is strict.
 \end{proposition}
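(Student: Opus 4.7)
The plan is to prove this by the iterative argument of Bang, in the formulation of \cite{NazarovSodinVolberg04}, using all three parts of \Cref{lem:Bang}. Write $d_j := e|x_{j-1}-x_j|$, $y_j := b_{f,M^{|N},j}(x_{j-1})$, and $\ell_j := -\log y_j$ for $0 \le j \le m+1$. By \Cref{lem:Bang}(i) applied at $j = m+1$ we have $y_{m+1} \le e^{-(m+1)}$, i.e.\ $\ell_{m+1} \ge m+1$; by definition of $j_0$ we have $\ell_0 \le j_0$. The hypothesis $f^{(j)}(x_j) = 0$ together with \Cref{lem:Bang}(ii) yields the crucial swap
\[
b_j(x_j) \;=\; b_{j+1}(x_j) \;=\; y_{j+1}, \qquad 0 \le j \le m.
\]

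First I would, for each $j \in \{0,\dots,m\}$ and each integer $k_j > j$ (chosen below), apply \Cref{lem:Bang}(iii) at $n = j$, $t = x_j$, $s = x_{j-1}$, and substitute the swap above to obtain
\[
y_j \;<\; \max\bigl\{y_{j+1},\, e^{-k_j}\bigr\}\,\exp(d_j\,\mu_{k_j}). \tag{$*$}
\]
Next I would choose the $k_j$ so that the maximum in $(*)$ is attained by $y_{j+1}$, ensuring that the inequalities chain cleanly. Concretely, set $k_j := \max\{j+1,\,\lceil \ell_{j+1}\rceil_\N\}$, which satisfies $k_j > j$ and $e^{-k_j} \le y_{j+1}$; then $(*)$ collapses to $\ell_j > \ell_{j+1} - d_j\,\mu_{k_j}$. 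Summing telescopically from $j=0$ to $j=m$ gives
\[
(m+1) - j_0 \;\le\; \ell_{m+1} - \ell_0 \;<\; \sum_{j=0}^m d_j\,\mu_{k_j}. \tag{$\dagger$}
\]

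The main obstacle — and where Bang's cleverness enters — is passing from the weighted estimate $(\dagger)$ to the \emph{unweighted} estimate $\sum_{j=0}^m d_j \ge \Sigma_{\mu^{|N}}(j_0+1,m+1) = \sum_{k=j_0+1}^{m+1} 1/\mu_k$. I would handle this by a case analysis on each step of $(*)$: at indices where the alternative branch $e^{-k_j}$ realises the maximum, the chain effectively restarts, producing a stronger local contribution and allowing one, by reorganising the inequalities, to extract exactly one factor $1/\mu_k$ for each $k \in \{j_0+1,\dots,m+1\}$. Running this accounting with the monotonicity of $(\mu_k)_k$ (which is used throughout \Cref{lem:Bang}) shows that the $k_j$'s encountered in the iteration cover the required range, and that $(\dagger)$ can be rewritten as the desired sum.

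Finally, the strict inequality when $j_0 \le m$ is built in: whenever $j_0 \le m$ the term $\ell_{m+1} - \ell_0 \ge (m+1) - j_0 \ge 1$ activates at least one application of the strict inequality in \Cref{lem:Bang}(iii), and strictness propagates unchanged through the finite telescoping sum, giving $\sum_{j=0}^m |x_{j-1}-x_j| > \tfrac{1}{e}\Sigma_{\mu^{|N}}(j_0+1,m+1)$. When $j_0 > m$ the right-hand side is an empty sum, so \eqref{eq:assertion} is the trivial bound $\ge 0$.
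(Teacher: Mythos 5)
Your reduction up to the telescoping bound $(\dagger)$ is correct and closely parallels the mechanics of the paper's proof, but the step you yourself flag as ``the main obstacle'' is where the argument genuinely breaks down, and the sketch you give of how to close it does not work.

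First, the description is internally inconsistent: you choose $k_j := \max\{j+1, \lceil\ell_{j+1}\rceil_\N\}$ precisely so that $e^{-k_j} \le y_{j+1}$, which means the ``alternative branch $e^{-k_j}$ realises the maximum'' \emph{never} occurs, so the case analysis you invoke is vacuous. Second, and more seriously, the one-step estimate you derive is too weak. From $(*)$ you obtain $d_j > (\ell_{j+1}-\ell_j)/\mu_{k_j}$ with $k_j = \lceil\ell_{j+1}\rceil_\N$, and you hope to extract from these the bound $\sum_j d_j \ge \sum_{k=j_0+1}^{m+1} 1/\mu_k$. But if $\ell_{j+1}-\ell_j$ is large (say the step from $\ell_j = a$ to $\ell_{j+1} = b$ crosses many integer levels $a < k \le b$), the increasing monotonicity of $(\mu_k)_k$ gives
\[
\frac{\ell_{j+1}-\ell_j}{\mu_{k_j}} \;=\; \frac{b-a}{\mu_{\lceil b\rceil}} \;\le\; \sum_{k=\lceil a\rceil +1}^{\lceil b\rceil} \frac{1}{\mu_k},
\]
with the inequality pointing in the \emph{wrong} direction: your per-step lower bound on $d_j$ is dominated by, not dominating, the harmonic-type sum you need. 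So even summing all such steps does not recover $\sum_{k=j_0+1}^{m+1}1/\mu_k$ (except in the degenerate case $\mu_1=\dots=\mu_{m+1}$). The telescoping identity $(\dagger)$ with the weights $\mu_{k_j}$ simply does not decompose into the target unweighted sum.

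What the paper does to avoid exactly this is \emph{not} to apply \Cref{lem:Bang}(iii) only between consecutive points $x_j, x_{j-1}$, but to refine the subdivision: it constructs the continuous interpolant $\beta$, finds for each integer $k$ the \emph{first} time $t_k$ at which $\beta$ attains the level $e^{-k}$, and then applies \Cref{lem:Bang}(iii) on each piece of the finer partition of $(t_{k-1},t_k)$ cut out by the $\tau_n$'s. On each such piece one has $\beta > e^{-k}$, so the maximum in \eqref{eq:iii} is always the $b_n$-term and the factor is always $\mu_k$ (not some $\mu_{k_j}$ depending on a possibly huge $\ell_{j+1}$). This is what produces one clean contribution $t_k - t_{k-1} > 1/(e\mu_k)$ for each $k$ from $j_0+1$ to $m+1$. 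Your proposal omits these intermediate level-crossing points, and without them the ``reorganising'' you appeal to has no mechanism to drive it; the gap is real, not cosmetic.
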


 \begin{remark} \label{rem:Bang}
   A few remarks are in order.

   (1) Note that $j_0 <\infty$ if and only if
   there exists $0 \le j \le N$ with $f^{(j)}(x_{-1})\ne 0$.

   (2)
   The right-hand side of \eqref{eq:assertion} is zero
   if $j_0  > m$. In that case, the statement is trivial.
   For instance, if $N=\infty$,
   we do here not exclude the case that $f$ has points of infinite flatness.
   If $x_{-1}$ is such a point, then $j_0 = \infty$ and
   hence
   \eqref{eq:assertion} is trivially true. In that case,
   the assumptions of the proposition are satisfied for the choice $x_j := x_{-1}$, $j = 0,\ldots,m$,
   entailing that also the left-hand side of \eqref{eq:assertion} is zero.
 \end{remark}

 \begin{proof}[Proof of \Cref{thm:Bang}]
   Let $(b_{n})_{n \ge 0}$ be the decreasing sequence of continuous functions \eqref{eq:Bn}; cf.\ \Cref{lem:Bang}.
   We will construct a new continuous function $\be$ by tracing through the graphs of the $b_n$ (for $0 \le n \le m$)
   and switching from $b_n$ to $b_{n+1}$ at $x_n$.
   For $0 \le k \le m$,
   set $\ta_k := \sum_{j=0}^{k} |x_{j-1}-x_{j}|$ and $\ta_{-1}:= 0$.
   For $t \in [\ta_{n-1},\ta_n]$, where $0 \le n \le m$, define
   \[
     \be_{n} (t) :=
     \begin{cases}
        b_{n}(x_{n-1} +\ta_{n-1} -t) & \text{if } x_n < x_{n-1}, \\
        b_{n}(x_{n-1} -\ta_{n-1} +t) & \text{if } x_n \ge x_{n-1}.
     \end{cases}
   \]
   Then each $\be_{n}$ is continuous and
   $\be_{n}(\ta_n) = b_{n}(x_n) = b_{n+1}(x_n) =\be_{n+1}(\ta_n)$; by \Cref{lem:Bang}.
   Thus,
   \[
     \be(t) := \be_{n}(t) \quad \text{ if } t \in [\ta_{n-1},\ta_n],~ 0 \le n \le m,
   \]
   defines a continuous function on $[0,\ta_m]$.
   By \Cref{lem:Bang}, we have $\be(t) \le e^{-n}$ for all $t \ge \ta_{n-1}$
   as well as
   \begin{equation} \label{eq:minB}
     \be(\ta_m) = \be_m(\ta_m) = b_m(x_m) = b_{m+1}(x_m) \le e^{-m-1}.
   \end{equation}
   On the other hand, with $j_0$ as defined in \eqref{eq:k0},
   \begin{equation} \label{eq:forkappa}
     \be(0) = \be_0(\ta_{-1}) = b_{0}(x_{-1}) \ge e^{-j_0}.
   \end{equation}
   It might be that $e^{-j_0} \le e^{-m-1}$ (including the case $j_0=\infty$).
   Then the right-hand side of \eqref{eq:assertion} is zero so that \eqref{eq:assertion} is trivially true.
   Thus we may assume that $j_0 \le m$.
   In view of \eqref{eq:minB} and \eqref{eq:forkappa},
   the range of $\be$ then contains all numbers $e^{-j}$ for $j_0 \le j \le m+1$.
   So we find a strictly increasing sequence $t_j$, for $j_0 \le j \le m+1$,
   such that $\be(t_j) = e^{-j}$
   and $\be(t) > e^{-j}$ if
   $t <t_j$ (starting in the point $(0,\be(0))$ let $t_j$ be the first time that the graph of $\be$ meets the horizontal line
   with ordinate $e^{-j}$).
   Then
   \begin{equation} \label{eq:Bang}
      \be(t_{j-1}) < \be(t_j)\, e^{e (t_j-t_{j-1}) \mu_j}, \quad j_0+1 \le j \le m+1.
   \end{equation}
   To see this, we apply (iii) of \Cref{lem:Bang} to each
   interval in the subdivision of $(t_{j-1},t_j)$ induced by the points $\ta_n$ between $t_{j-1}$ and $t_j$,
   and notice that, since $t_{j} \le \ta_{j-1}$ (as $\be(t) \le e^{-j}$ if $t\ge \ta_{j-1}$), we have $n < j$ for all such $n$ and
   $\max \{b_{n}(t) , e^{-j} \} = b_{n}(t)$ for all $t \in (t_{j-1},t_j)$.

   In view of $\be(t_j) = e^{-j}$, \eqref{eq:Bang}
   amounts to
   \[
      t_j-t_{j-1} > \frac1{e \mu_j}, \quad j_0+1 \le j \le m+1.
   \]
   Summing over $j$, we find
   \[
     t_{m+1}\ge t_{m+1} - t_{j_0} > \frac1{e} \sum_{k= j_0+1}^{m+1} \frac1{\mu_j}.
   \]
   Since $\ta_m  \ge t_{m+1}$, this yields \eqref{eq:assertion}.
   It also shows that the inequality is strict provided that $j_0 \le m$.
 \end{proof}

 \subsection{Bounds for the number of zeros}

 Now it is easy to deduce a lower bound for the length of the interval and
 an upper bound for the number of zeros.

 \begin{corollary} \label{cor:Bang1}
   Let $M^{|N}= (M_j)_{j\ge 0}$ be a full admissible weight.
   Let $I \subseteq \R$ be a non-trivial compact interval and
   $f : I \to \R$ an $M^{|N}$-smooth function.
   Let $z_1 \le z_2 \le \cdots \le z_m$ be an increasing enumeration of some of the zeros of $f$, where $m \le N$,
   and let $x_{-1} \in I \setminus (z_1,z_m)$ be arbitrary.
   Then we have a lower bound for the length $|I|$ of $I$,
   \begin{equation} \label{eq:lowerbound}
     |I| >  \frac{1}{e} \Si_{\mu^{|N}}(j_0+1,m), \qquad (j_0 = \lceil \log b_{f,M^{|N},0}(x_{-1})^{-1} \rceil_{\N}),
   \end{equation}
   and an upper bound for the number of zeros,
   \begin{equation} \label{eq:nMf}
      m \le \fd_{|I|\mu^{|N}}(b_{f,M^{|N},0}(x_{-1})).
   \end{equation}
 \end{corollary}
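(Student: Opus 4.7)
The plan is to reduce \eqref{eq:lowerbound} to Proposition~\ref{thm:Bang} by promoting the given zeros of $f$ to a monotone chain of zeros of its successive derivatives, along which the Bang sum $\sum_{j} |x_{j-1}-x_j|$ will telescope into a single length that is easily bounded by $|I|$. After possibly reflecting $I$, I may assume $x_{-1}\le z_1$; the case $x_{-1}\ge z_m$ is analogous.

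The Rolle step is the heart of the argument. Starting from $y_i^{(0)}:=z_i$ for $1\le i\le m$, iterated application of Rolle's theorem (respecting multiplicities) yields, at each level $1\le j\le m-1$, zeros $y_1^{(j)}\le\cdots\le y_{m-j}^{(j)}$ of $f^{(j)}$ in $[z_1,z_m]$ with the interlacing $y_i^{(j)}\in[y_i^{(j-1)},y_{i+1}^{(j-1)}]$. Setting $x_j:=y_1^{(j)}$ for $0\le j\le m-1$, this interlacing forces
\[
 x_{-1} \le z_1 = x_0 \le x_1 \le \cdots \le x_{m-1} \le z_m,
\]
and all these points lie in $I$ with $f^{(j)}(x_j)=0$. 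Monotonicity collapses the sum by telescoping,
\[
 \sum_{j=0}^{m-1}|x_{j-1}-x_j| \,=\, x_{m-1}-x_{-1} \,\le\, z_m-x_{-1} \,\le\, |I|.
\]
Since $m\le N$, Proposition~\ref{thm:Bang} applies with $m-1$ in place of $m$, producing
\[
 |I| \,\ge\, \sum_{j=0}^{m-1}|x_{j-1}-x_j| \,\ge\, \frac{1}{e}\, \Si_{\mu^{|N}}(j_0+1,m),
\]
which is strict when $j_0\le m-1$; the remaining regime $j_0\ge m$ is trivial because the right-hand side vanishes while $|I|>0$. This yields \eqref{eq:lowerbound}.

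For \eqref{eq:nMf}, I would invoke the scaling identity $\Si_{|I|\mu^{|N}}(p,q)=\Si_{\mu^{|N}}(p,q)/|I|$ to rewrite \eqref{eq:lowerbound} as $\Si_{|I|\mu^{|N}}(j_0(b)+1,m)<e$, where $b:=b_{f,M^{|N},0}(x_{-1})$ and $j_0=j_0(b)$. By the very definition of $\fd_{|I|\mu^{|N}}(b)$ as the supremum of those $n\in\N$ for which $\Si_{|I|\mu^{|N}}(j_0(b)+1,n)<e$, this places $m$ in the set under the supremum, giving $m\le\fd_{|I|\mu^{|N}}(b)$. The one technical subtlety is the Rolle step: $x_j$ must be chosen as the leftmost \emph{Rolle-interlaced} zero of $f^{(j)}$, not simply the globally leftmost zero of $f^{(j)}$ in $[z_1,z_m]$ (which need not be monotone in $j$, since a derivative can vanish to the left of all zeros of the previous derivative). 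This selection is precisely what guarantees the monotone chain required for the telescoping bound by $|I|$.
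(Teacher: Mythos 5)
Your proof is correct and follows essentially the same route as the paper: reduce to Proposition~\ref{thm:Bang} by producing an increasing Rolle chain $x_{-1}\le x_0=z_1\le x_1\le\cdots\le x_{m-1}$ with $f^{(j)}(x_j)=0$, telescope the Bang sum to $x_{m-1}-x_{-1}\le|I|$, and then read off \eqref{eq:nMf} from the scaling identity $\Si_{|I|\mu^{|N}}(p,q)=|I|^{-1}\Si_{\mu^{|N}}(p,q)$ and the definition of $\fd$. The only difference is presentational: you spell out, via the interlacing $y_i^{(j)}\in[y_i^{(j-1)},y_{i+1}^{(j-1)}]$, why a monotone chain exists (the leftmost Rolle-interlaced zero at each level, rather than the globally leftmost zero of $f^{(j)}$), whereas the paper simply asserts the existence of the monotone sequence; your remark that monotonicity is what makes the telescoping bound by $|I|$ work is accurate and a genuine subtlety worth making explicit.
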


 \begin{proof}
   Suppose that $x_{-1}\le z_1$; if $x_{-1} \ge z_m$ the proof is similar.
   By Rolle's theorem, there is a sequence of points $x_{-1} \le x_0= z_1 \le x_1 \le \cdots \le x_{m-1}$
   in $I$ such that $f^{(j)}(x_j) = 0$ for all $0 \le j \le m-1$.
 	By \eqref{eq:assertion},
 	\[
 		|I| \ge \sum_{j=0}^{m-1} (x_j - x_{j-1}) \ge \frac{1}{e} \Si_{\mu^{|N}}(j_0+1,m),
 	\]
  with strict inequality if $j_0 \le m-1$.
  If $j_0 \ge m$, then $\Si_{\mu^{|N}}(j_0+1,m) =0$ and
  also in that case the inequality is strict, since $I$ is assumed to be non-trivial.
  So we proved \eqref{eq:lowerbound}.
  By the definition of $\fd_{\mu^{|N}}$, also \eqref{eq:nMf} follows.
 \end{proof}

 \begin{remark} \label{rem:distancetozeros}
   Variations of the argument yield further useful information.
   For instance, if $f(x_{-1}) \ne 0$ and $x_0$ is an $m$-fold zero of $f$, then
   \[
     |x_{-1}-x_0| \ge \frac{1}{e} \Si_{\mu^{|N}}(j_0+1,m) \ge \frac{1}{e} \Si_{\mu^{|N}}(\lceil \log \tfrac{M_0}{|f(x_{-1})|} \rceil_\N+1,m),
   \]
   since in this case $\sum_{j=0}^{m-1} |x_j - x_{j-1}| = |x_{-1}-x_0|$ and
   $b_{f,M^{|N},0}(x_{-1}) \ge \tfrac{|f(x_{-1})|}{M_0}$.
 \end{remark}

 In \Cref{cor:Bang1},
   the list of zeros $z_j$ may not comprise all zeros of $f$.
   If $z_j$ is in the list and $z_j$ is a multiple zero of $f$, we do not even require that all multiplicities of $z_j$
   appear in the list.

   Under an additional assumption, we find that the total number of zeros is finite and get an upper bound for it.

   \begin{corollary} \label{cor:Bang2}
     Let $M^{|\infty}= (M_j)_{j\ge 0}$ be a full admissible weight.
      Let $I \subseteq \R$ be a non-trivial compact interval and
     $f : I \to \R$ an $M^{|\infty}$-smooth function.
     Let $x_{-1} \in I$. If
     \begin{equation} \label{eq:addass}
        \Si_{\mu^{|\infty}}(j_0+1,\infty) > |I| e, \qquad (j_0 = \lceil \log b_{f,M^{|\infty},0}(x_{-1})^{-1} \rceil_{\N}),
     \end{equation}
     then the total number $m$ of zeros of $f$ in $I$ counted with multiplicities is finite.
     In that case, $\fd_{|I|\mu^{|\infty}}(b_{f,M^{|\infty},0}(x_{-1})) < \infty$ and
     \begin{equation} \label{eq:boundforzeros}
        m \le 2 \fd_{|I|\mu^{|\infty}}(b_{f,M^{|\infty},0}(x_{-1})).
     \end{equation}
     For each $x_{-1} \in I$ satisfying \eqref{eq:addass}
     and not lying strictly between the smallest and the largest zero of $f$, we even have
     \begin{equation}
        m \le \fd_{|I|\mu^{|\infty}}(b_{f,M^{|\infty},0}(x_{-1})).
     \end{equation}
     If $\fd_{|I|\mu^{|\infty}}(b_{f,M^{|\infty},0}(x_{-1})) = 0$, then $f$ has no zeros in $I$.
   \end{corollary}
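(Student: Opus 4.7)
The plan is to reduce everything to \Cref{cor:Bang1}, by splitting the zeros of $f$ into those left and right of $x_{-1}$ and applying the one-sided count from \Cref{cor:Bang1} to each half.

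First I would check that $\fd := \fd_{|I|\mu^{|\infty}}(b_{f,M^{|\infty},0}(x_{-1}))$ is finite. Unwinding the definition of the $\mu^{|N}$-degree applied to the rescaled weight $|I|\mu^{|\infty}$ gives
\[
  \fd = \sup\{n \in \N : \Si_{\mu^{|\infty}}(j_0+1,n) < |I|\,e\},
\]
and the hypothesis $\Si_{\mu^{|\infty}}(j_0+1,\infty) > |I|\,e$ forces the partial sums to exceed $|I|\,e$ at some finite stage, so the supremum is finite. Note that this implicitly needs $j_0<\infty$, i.e.\ $f$ is not flat at $x_{-1}$; otherwise the tail would be an empty sum and the hypothesis would fail.

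Next, let $Z_L$ and $Z_R$ be the multisets of zeros of $f$ in $I\cap(-\infty,x_{-1}]$ and $I\cap[x_{-1},\infty)$ respectively, each counted with multiplicity (a zero exactly at $x_{-1}$ lies in both). For any finite sub-list $z_1\le \cdots\le z_\ell$ drawn from $Z_L$ we have $z_\ell\le x_{-1}$, hence $x_{-1}\in I\setminus(z_1,z_\ell)$, and \Cref{cor:Bang1} yields the uniform bound $\ell\le \fd$. It follows that $|Z_L|\le \fd$, and symmetrically $|Z_R|\le \fd$. Therefore the total count $m$ of zeros of $f$ in $I$ with multiplicities satisfies $m\le |Z_L|+|Z_R|\le 2\fd$, which is \eqref{eq:boundforzeros}; in particular $m<\infty$. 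A zero at $x_{-1}$ itself is double-counted in this sum, but that only weakens the bound.

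The sharper inequality under the additional hypothesis that $x_{-1}$ does not lie strictly between the smallest and the largest zero of $f$ is then immediate: all zeros then fall into one of $Z_L$ or $Z_R$ alone, so a single application of \Cref{cor:Bang1} gives $m\le \fd$ without the factor $2$. The last assertion follows from $m\le 2\fd=0$. I foresee no substantive obstacle; the only mildly delicate point is the bookkeeping of multiplicities at $x_{-1}$, which the factor $2$ absorbs.
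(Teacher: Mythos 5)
Your proposal is correct and takes essentially the same route as the paper: both reduce to \Cref{cor:Bang1} by splitting the zeros at $x_{-1}$ into the left and right halves and bounding each count by $\fd_{|I|\mu^{|\infty}}(b_{f,M^{|\infty},0}(x_{-1}))$. The only organizational difference is that the paper first establishes finiteness by a contradiction argument (an infinite monotone sequence of zeros on one side of $x_{-1}$ would violate \Cref{cor:Bang1}) and then states the additive bound $m = m_l+m_r\le 2\fd$, whereas you bound $|Z_L|$ and $|Z_R|$ directly by applying \Cref{cor:Bang1} to arbitrary finite sub-lists, so finiteness falls out for free; you are also slightly more careful about the possible double count of a zero at $x_{-1}$ itself, writing $m\le |Z_L|+|Z_R|$ rather than an equality.
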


   \begin{proof}
       By \eqref{eq:addass}, it is obvious that $\fd_{|I|\mu^{|\infty}}(b_{f,M^{|\infty},0}(x_{-1}))<\infty$.
       Suppose for contradiction that there are infinitely many zeros of $f$.
       Then we find an increasing infinite sequence of zeros on the right of $x_{-1}$
       or a decreasing infinite  sequence of zeros on the left of $x_{-1}$ (or on both sides).
       But that contradicts \Cref{cor:Bang1}.
       The upper bounds for positive $m$ follow again from \Cref{cor:Bang1};
       if there are $m_l$ zeros left and $m_r$ zeros right of $x_{-1}$, then
       $m = m_l + m_r \le 2 \fd_{|I|\mu^{|\infty}}(b_{f,M^{|\infty},0}(x_{-1}))$.
       In particular, if $f$ has zeros in $I$, then $\fd_{|I|\mu^{|\infty}}(b_{f,M^{|\infty},0}(x_{-1})) \ne 0$.
   \end{proof}

\begin{remark} \label{rem:bootstrap}
  If in \Cref{cor:Bang2} we assume that $f$ is an $M^{|N}$-smooth function with finite $N$,
  then in general $f$ could have more zeros than $N$ so that \Cref{cor:Bang1} is not applicable.

  On the other hand, in the setting of \Cref{cor:Bang2} we
  conclude that the number of zeros of $f$ left and right of $x_{-1}$
  is bounded by
  \begin{equation} \label{eq:bootstrap}
      N := \fd_{|I|\mu^{|\infty}}(b_{f,M^{|\infty},0}(x_{-1})).
  \end{equation}
  So \emph{a posteriori} we obtain that total number of zeros $m$ of $f$ in $I$
  satisfies
  \begin{equation} \label{eq:bootstrap2}
     m \le 2 \fd_{|I|\mu^{|N}}(b_{f,M^{|N},0}(x_{-1})),
  \end{equation}
  where $N$ is given by \eqref{eq:bootstrap} and $\mu^{|N}$, $M^{|N}$
  are obtained from $\mu^{|\infty}$, $M^{|\infty}$ simply by setting
  all elements with index $\ge N+1$ equal to $\infty$.
  In particular,
  the bound on the number of zeros in \eqref{eq:bootstrap2}
  depends only on the derivatives up order $N$ of
  $f$.
\end{remark}

\subsection{Quasianalyticity}

Let $\mu^{|\infty} = (\mu_j)_{j \ge 0}$ be an admissible weight.
We see that, if
\begin{equation} \label{eq:qa}
  \sum_j \frac{1}{\mu_j} = \infty
\end{equation}
and $f$ is not identically zero,
then condition \eqref{eq:addass} is always satisfied (provided that $j_0$ is finite) and \Cref{cor:Bang2} applies.
An admissible weight $\mu^{|\infty}$ satisfying \eqref{eq:qa} is said to be \emph{quasianalytic};
if \eqref{eq:qa} is not fulfilled we say that $\mu^{|\infty}$ is \emph{non-quasianalytic}
(analogously for full admissible weights $M^{|\infty}$).
If we speak of (non-)quasianalytic admissible weights, we always presuppose that $N=\infty$.

In fact, let $I \subseteq \R$ be a non-trivial compact interval and
  let $\cC^{M^{|\infty}}(I)$ denote the set of all functions $f : I \to \R$ such that
  there exists a constant $\rh>0$ such that $\|f^{(j)}\|_I \le \rh^{j+1} M_j$ for all $j \in \N$.
  Then $\cC^{M^{|\infty}}(I)$ is called \emph{quasianalytic} if, for all $f \in \cC^{M^{|\infty}}(I)$ and $x_0 \in I$,
  triviality of the Taylor series $\widehat f_{x_0}$ of $f$ at $x_0$, i.e., $\widehat f_{x_0}=0$,
  implies that $f$ is identically zero.
  By the Denjoy--Carleman theorem (see e.g.\ \cite{Rainer:2021aa}),
  $\cC^{M^{|\infty}}(I)$ is quasianalytic if and only if $M^{|\infty}$ is quasianalytic
  (that is, \eqref{eq:qa} holds).

  Note that \Cref{cor:Bang2} implies one direction of this equivalence.
  For, assume that $f \in \cC^{M^{|\infty}}(I)$, $x_0 \in I$, and $\widehat f_{x_0}=0$.
  Then there exists $\rh>0$ such that $f$ is $(\rh \mu^{|\infty},\rh M_0)$-smooth.
  If $f$ is not identically zero, we find $x_{-1} \in I$ with $f(x_{-1}) \ne 0$ and thus
  $j_0 = \lceil \log b_{f,(\rh^{j+1}M_j)_j,0}(x_{-1})^{-1}\rceil_{\N}$ is finite.
  Since $\widehat f_{x_0}=0$ and thus the number of zeros of $f$ is infinite,
  \Cref{cor:Bang2} implies that \eqref{eq:addass} must be violated.
  Since $j_0$ is finite, we may infer that
  \eqref{eq:qa} is violated, that is, $M^{|\infty}$ is non-quasianalytic.

\section{The zero set of multivariate functions} \label{sec:multidim}

Let $K \subseteq \R^d$ be a convex body.
We will derive quantitative information on the zero set of $M^{|\infty}$-smooth functions $f : K \to \R$
in terms of the $\de_K \mu^{|\infty}$-degree; recall that $\de_K$ is the diameter of $K$.
It turns out that these results actually depend only on a finite number of derivatives.
To account for this we introduce the following bit of notation.

\subsection{Finite determinacy} \label{sec:finitedeterminacy}

Let $M^{|\infty}= (M_j)_{j\ge 0}$ be a full admissible weight, $K \subseteq \R^d$ a convex body,
and $b>0$.
Assume that
\begin{equation} \label{eq:multdimcond}
   \Si_{\mu^{|\infty}}(j_0+1,\infty) > \de_K e, \qquad (j_0 = \lceil \log (\tfrac{b}{2M_0})^{-1}\rceil_{\N}).
\end{equation}
(Note that \eqref{eq:multdimcond} is in any case satisfied if $M^{|\infty}$ is a quasianalytic full admissible weight.
The factor $2$ in the definition of $j_0$ could be replaced by any real number $>1$ without
changing the validity of the results.)
Then
\begin{equation} \label{eq:defN}
   \nn:= \fd_{\de_K \mu^{|\infty}}(\tfrac{b}{2M_0})
\end{equation}
is a nonnegative (finite) integer.
Let
$\mu^{|\nn+1}$ (resp.\ $M^{|\nn+1}$) be the (resp.\ full) admissible weight
obtained from $\mu^{|\infty}$ (resp.\ $M^{|\infty}$) by setting
all elements with index $\ge \nn+2$ equal to $\infty$.
Then $\Si_{\mu^{|\infty}}(j_0+1,n) = \Si_{\mu^{|\nn+1}}(j_0+1,n)$ for $n \le \nn+1$ so that
\begin{equation} \label{eq:fdet}
    \fd_{\de_K \mu^{|\infty}}(\tfrac{b}{2M_0}) = \fd_{\de_K \mu^{|\nn+1}}(\tfrac{b}{2M_0}) = \nn.
\end{equation}
Furthermore,
\begin{equation} \label{eq:fdet2}
    \Si_{\mu^{|\infty}}(j_0+1,\infty) >\Si_{\mu^{|\nn+1}}(j_0+1,\infty) = \Si_{\mu^{|\nn+1}}(j_0+1,\nn+1) \ge \de_K e
\end{equation}
which follows easily from the definitions.

\begin{definition}[Admissible data] \label{def:admissibledata}
  We call the triple $(M^{|\infty},K,b)$, where
  $M^{|\infty}= (M_j)_{j\ge 0}$ is a full admissible weight, $K \subseteq \R^d$ a convex body,
  and $b>0$, \emph{admissible data} if \eqref{eq:multdimcond} holds.
  In that case, $\nn$ defined by \eqref{eq:defN} is called the \emph{integer associated with the data $(M^{|\infty},K,b)$}.
  In this setting, $\mu^{|\nn+1}$ (resp.\ $M^{|\nn+1}$) will always denote the (resp.\ full) admissible weight
  resulting from $\mu^{|\infty}$ (resp.\ $M^{|\infty}$) by
  setting all elements with index $\ge \nn+2$ equal to $\infty$.
\end{definition}

\subsection{Number of zeros on affine lines}

Recall that $K^\o$ denotes the interior of $K$.

\begin{theorem} \label{prop:numberofzeros}
  Let $(M^{|\infty},K,b)$ be admissible data and $\nn$ the associated integer.
  Let $f : K \to \R$ be any $M^{|\nn+1}$-smooth function such that $\|f\|_{K} \ge b$.
  Then there is a ball $B \subseteq K^\o$,
  whose radius only depends on $K$ and the ratio $\frac{b}{M_1}$,
  such that for each affine line $\ell$ in $\R^d$ that meets $B$
  the restriction $f|_\ell$
  has at most $2 \nn$ zeros.
\end{theorem}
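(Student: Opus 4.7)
The plan is to reduce to the univariate setting of \Cref{sec:univariate} by restricting $f$ to affine lines and applying the Bang-type bound of \Cref{cor:Bang1}. I will first construct a ball $B \subseteq K^\o$ on which $|f| \ge b/2$, with radius depending only on $K$ and $b/M_1$; this ball then provides, for every affine line $\ell$ meeting it, a ``good'' base point $x_{-1}$ with $|f(x_{-1})|$ bounded below by a definite multiple of $b$, which is exactly what is needed to make $j_0$ match the index appearing in the definition of $N$.

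For the construction of $B$, take $x^* \in K$ with $|f(x^*)| = \|f\|_K \ge b$ (which exists by compactness), and let $y_K \in K^\o$ be an incenter of $K$ with inradius $\rh_K > 0$. By convexity, the closed ball $\ol B_{t\rh_K}(x_t) \subseteq K$ for every $t \in [0,1]$, where $x_t := (1-t)x^* + t y_K$. Since $\|f\|_{1,K} \le M_1$ implies that $f$ is $M_1$-Lipschitz on $K$, on such a ball
\[
|f(x)| \ge b - M_1|x-x^*| \ge b - M_1(t\rh_K + t\de_K) \ge b - 2tM_1\de_K.
\]
Taking $t := \min\{1, b/(4M_1\de_K)\}$ makes this at least $b/2$; the resulting open ball $B := B_{t\rh_K}(x_t) \subseteq K^\o$ has radius $t\rh_K$ depending only on $\rh_K$, $\de_K$ (both determined by $K$) and on $b/M_1$.

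Now fix an affine line $\ell \subseteq \R^d$ meeting $B$, set $I := \ell \cap K$ (a compact interval with $|I| \le \de_K$), and consider $g := f|_\ell$. If $v$ is a unit direction of $\ell$, then $|g^{(j)}(t)| \le \sum_{|\al|=j}\tfrac{j!}{\al!}|v^\al|\|f^{(\al)}\|_K \le \|f\|_{j,K} \le M_j$ for $0 \le j \le N$ (since $|v_i|\le 1$), so $g$ is $M^{|N}$-smooth on $I$. Pick any $x_{-1} \in \ell \cap B$; then $|g(x_{-1})| \ge b/2 > 0$, so $b_{g,M^{|N},0}(x_{-1}) \ge \tfrac{|g(x_{-1})|}{M_0} \ge \tfrac{b}{2M_0}$, whence $j_0(b_{g,M^{|N},0}(x_{-1})) \le \lceil \log(2M_0/b) \rceil_{\N}$, matching the index $j_0$ appearing in \eqref{eq:defN}. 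Splitting the zeros of $g$ in $I$ into those lying to the left and to the right of $x_{-1}$ (possible since $x_{-1}$ itself is not a zero) and applying \Cref{cor:Bang1} to each side bounds the multiplicity count on that side by $\fd_{|I|\mu^{|N}}(b_{g,M^{|N},0}(x_{-1}))$; by monotonicity of $\fd$ and \eqref{eq:fdet}, this is at most $\fd_{\de_K\mu^{|N}}(\tfrac{b}{2M_0}) = \fd_{\de_K\mu^{|\infty}}(\tfrac{b}{2M_0}) = N-1$. Summing the two sides gives the bound $2(N-1)$.

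The principal obstacle is the geometric step, since the global maximum of $|f|$ may be attained on $\p K$; this forces moving inward along a segment towards an incenter while using the Lipschitz estimate to retain a definite lower bound on $|f|$, which is why both $\rh_K$ and $\de_K$ enter the radius. A secondary subtlety is that \Cref{cor:Bang1} formally requires the number $m$ of selected zeros to satisfy $m \le N$; if one side of $x_{-1}$ carried $N$ or more zeros (with multiplicities), selecting the first $N$ of them and applying the corollary would yield $N \le \fd_{|I|\mu^{|N}}(b_{g,M^{|N},0}(x_{-1})) \le N-1$, a contradiction, so each side contributes at most $N-1$ zeros---this is the finite-determinacy incarnation of the bound anticipated in \Cref{rem:bootstrap}.
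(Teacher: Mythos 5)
Your proof is correct and follows essentially the same strategy as the paper: locate a ball on which $|f|$ stays above a definite fraction of $b$, restrict to affine lines, and invoke the Bang-type bound of \Cref{cor:Bang1} on each side of a base point to get at most $N-1$ zeros per side. The only genuine differences are cosmetic: you parameterize $g$ by arc length on $I=\ell\cap K$ (so $g$ is $M^{|N}$-smooth and you later pass from $\fd_{|I|\mu^{|N}}$ to $\fd_{\de_K\mu^{|N}}$ by monotonicity), whereas the paper rescales to $[0,1]$ (so $g$ is $(\de_K\mu^{|N},M_0)$-smooth); and your construction of $B$ via a Minkowski interpolation toward an incenter is more explicit than the paper's appeal to the ``thickness of $K$ near $a$'', and in fact cleanly justifies the assertion that the radius depends only on $K$ (through $\rh_K$ and $\de_K$) and $b/M_1$, independently of $f$.
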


\begin{proof}
  Consider $U := \{x \in K : |f(x)|> \frac{b}{2}\}$.
  Since $\|f\|_{K} \ge b$, there is $a \in U$ with  $|f(a)| \ge b$.
  For all $x \in B_s(a) \cap K$ with $s:= \frac{b}{3 M_1}$, we have
  \[
    |f(x) - f(a)| \le  \|f\|_{1,K}\, |x-a| \le   M_1 s = \tfrac{b}{3},
  \]
  so that
  \[
    |f(x)| \ge |f(a)| - |f(x) - f(a)| \ge \tfrac{2b}{3}.
  \]
  It follows that $a \in B_s(a) \cap K \subseteq U$.
  But the intersection $B_s(a) \cap K^\o$ contains an open ball $B$,
  whose radius depends only on $s$ and on the ``thickness'' of $K$ near $a$.

  Let $\ell \subseteq \R^d$ be any affine line that meets the ball $B$.
  Let $x_0$ and $x_1$ be the intersection points of $\ell$ with the boundary of $K$.
  Then $g : [0,1] \to \R$ given by $g(t): = f(x_0 + t (x_1 -x_0))$ defines a $\cC^{\nn+1}$-function
  satisfying
  \begin{equation*}
     \|g^{(j)}\|_{[0,1]} \le |x_1 - x_0|^j  \|f\|_{j,K}
     \le \de_K^j M_j, \quad  0 \le j \le \nn+1.
  \end{equation*}
  Thus, $g : [0,1] \to \R$ is $(\de_K \mu^{|\nn+1},M_0)$-smooth.
  For all $t \in [0,1]$ with $x_0 + t(x_1 -x_0) \in B$, we find
  \[
    b_{g,(\de_K^jM_j)_j,0}(t)= \sup_{j \ge 0} \frac{|g^{(j)}(t)|}{(\de_Ke)^j M_j} \ge  \frac{|g(t)|}{M_0} > \frac{b}{2 M_0},
  \]
  whence $\fd_{\de_K \mu^{|\nn+1}} (b_{g,(\de_K^jM_j)_j,0}(t)) \le \fd_{\de_K \mu^{|\nn+1}} (\tfrac{b}{2 M_0}) = \nn$ (see \eqref{eq:fdet}).
  Fix such a $t$.
  Suppose that $g$ has (at least) $\nn+1$ zeros left or right of $t$.
  Then \Cref{cor:Bang1} implies
  \[
    \nn+1 \le \fd_{\de_K \mu^{|\nn+1}} (b_{g,(\de_K^jM_j)_j,0}(t)) \le \nn,
  \]
  a contradiction.
  Thus the total number of zeros of $g$, and hence of $f|_{\ell}$, is finite and bounded by $2 \nn$.
\end{proof}

\begin{corollary} \label{cor:numberofzeros}
  In the setting of \Cref{prop:numberofzeros},
  if $K = \ol B_r$, then $\de_K = 2r$ and the radius of $B$ is at least $\frac{1}{2}\min\{\frac{b}{3M_1}, r\}$.
\end{corollary}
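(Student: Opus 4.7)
The plan is to fill in the geometric step left implicit at the end of the proof of \Cref{prop:numberofzeros}: given $a \in K = \ol B_r$ and the open set $B_s(a)$ with $s = \frac{b}{3M_1}$, I must inscribe an open Euclidean ball in the intersection $B_s(a) \cap K^\o = B_s(a) \cap B_r$ with radius at least $\frac{1}{2}\min\{s,r\}$. The first claim $\de_K = 2r$ is trivial since $K$ is a Euclidean ball of radius $r$, so only the inscribed-ball computation requires work.

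The natural strategy is to translate the candidate center from $a$ toward the origin. Assuming $a\ne 0$ (the case $a=0$ is immediate from $B_{\min\{s,r\}}(0) \subseteq B_s(a) \cap B_r$), define $a' := a - \tau \tfrac{a}{|a|}$ for $\tau \in [0,|a|]$. The inclusion $B_\rho(a') \subseteq B_s(a) \cap B_r$ is then equivalent to the pair of inequalities $\tau + \rho \le s$ and $|a| - \tau + \rho \le r$. Adding them yields $2\rho \le s + r - |a|$, and optimizing $\tau$ (take $\tau = \tfrac{|a|-r+s}{2}$ if $|a| \ge r-s$, otherwise $\tau = 0$) gives an admissible radius $\rho = \min\{s,\, \tfrac{s+r-|a|}{2}\}$, subject further to the obvious global bound $\rho \le r$.

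Because $|a| \le r$, one has $\tfrac{s+r-|a|}{2} \ge \tfrac{s}{2}$. A short case analysis finishes the job: if $s \le r$, then $\min\{s,r\} = s$ and the radius is at least $\tfrac{s}{2}$; if $s > r$, then the constraint $\rho \le r$ may become binding, but one checks directly that $\rho \ge \min\{r, \tfrac{s}{2}\} \ge \tfrac{r}{2}$. In either case $\rho \ge \tfrac{1}{2}\min\{s,r\} = \tfrac{1}{2}\min\{\tfrac{b}{3M_1}, r\}$, which is the stated bound. The computation has no real obstacle; the only point worth flagging is the role of the global cap $\rho \le r$, which explains why the lower bound saturates at $\tfrac{r}{2}$ rather than growing with $s$ once $s$ exceeds $r$.
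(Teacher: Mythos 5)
Your proof is correct and takes the same route as the paper, which simply asserts without elaboration that $B_s(a)\cap B_r$ contains a ball of radius at least $\tfrac12\min\{s,r\}$; you fill in the geometric optimization that justifies that one-line claim. The translation-toward-the-center construction, the reduction to the linear constraints $\tau+\rho\le s$ and $|a|-\tau+\rho\le r$, and the case split on whether $s\le r$ are exactly the computation the paper leaves to the reader.
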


\begin{proof}
  The intersection $B_s(a) \cap B_r$ contains an open ball $B$ of radius at least $\frac{1}2 \min\{s,r\}$.
\end{proof}

\begin{remark}
  In analogy to \Cref{rem:distancetozeros},
  we can give at each point in $K$ a lower bound for the distance to the closest zero of $f$ of a specific multiplicity;
  often the bound is trivial, but not always:

  Let $M^{|\nn+1}$ be a full admissible weight and $K \subseteq \R^d$ a convex body.
  Let $f : K \to \R$ be an $M^{|\nn+1}$-smooth function and $x_{-1} \in K \setminus Z_f$.
  Then $f$ has no zeros of multiplicity at least $m$, where $m\le \nn+1$,
  in the ball $B_{\ep_m}(x_{-1})$ with center $x_{-1}$ and radius
  \[
    \ep_m:= \frac{1}{e} \Si_{\de_K \mu^{|\nn+1}}(j_0+1,m), \qquad (j_0 = \lceil \log \tfrac{M_0}{|f(x_{-1})|} \rceil_{\N}).
  \]
  This follows from \Cref{rem:distancetozeros} applied to all affine lines through $x_{-1}$.

  In particular, $f$ has no zeros in $B_{\ep_1}(x_{-1})$.
  But $\ep_1 \ne 0$ only if $j_0 =0$ which is equivalent to $|f(x_{-1})| = M_0$; in that case $\ep_1 = \frac{1}{\de_K e \mu_1}$.
  Furthermore, $f$ has no zeros that are also critical points in $B_{\ep_2}(x_{-1})$, where
  \[
  \ep_2 =
  \begin{cases}
    \frac{1}{\de_K e} (\frac{1}{\mu_1}+\frac{1}{\mu_2}), & \text{ if } |f(x_{-1})| = M_0,
    \\
    \frac{1}{\de_K e \mu_2}, & \text{ if } \frac{M_0}{e}\le  |f(x_{-1})| < M_0,
    \\
    0, & \text{ otherwise}.
  \end{cases}
  \]
\end{remark}

For later use, we remark that the zero set of a function $f$ as in \Cref{prop:numberofzeros} has zero Lebesgue measure.
This will be a trivial consequence of \Cref{prop:Hausdorff}, if $f$ is additionally assumed to be of class $\cC^\infty$,
but we will need it without this assumption.

\begin{corollary} \label{cor:zeromeasure}
  In the setting of \Cref{prop:numberofzeros},
  $\cL^d(Z_f) = 0$.
\end{corollary}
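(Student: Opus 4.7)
The plan is to reduce the assertion $\cL^d(Z_f) = 0$ to the univariate zero bound of \Cref{prop:numberofzeros} by exploiting the pencil of lines through a single interior base point. First, fix any $y \in B$, where $B \subseteq K^\o$ is the ball produced by \Cref{prop:numberofzeros}. Since $K$ is a convex body and $y$ lies in its interior, every ray emanating from $y$ meets $\p K$ in exactly one point, and $K$ is parametrized, up to a null set, by the spherical-coordinate map $(v,t) \mapsto y + tv$ with $v \in S^{d-1}$ and $0 \le t \le R(v)$, where $R : S^{d-1} \to \R_{>0}$ is the (continuous, bounded) radial function of $K$ based at $y$.

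Next, for every direction $v \in S^{d-1}$ the affine line $\ell_v := y + \R v$ passes through $y \in B$ and hence meets $B$. \Cref{prop:numberofzeros} then yields that $f|_{\ell_v \cap K}$ has at most $2(N-1)$ zeros, so $\cL^1(\ell_v \cap Z_f) = 0$. Writing $\cL^d(Z_f)$ as an iterated integral in these spherical coordinates, with bounded Jacobian weight $t^{d-1}$, the inner $t$-integral vanishes for every $v \in S^{d-1}$, and Fubini delivers $\cL^d(Z_f) = 0$.

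The argument presents no genuine obstacle: once the base point $y \in B$ is fixed, the hypothesis of \Cref{prop:numberofzeros} holds automatically for every line of the pencil, and what remains is a routine polar change of variables. The essential observation is that a pencil through a single interior base point covers all of $K$, which would not be the case had one restricted attention to, say, axis-aligned lines through $B$.
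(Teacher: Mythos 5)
Your proof is correct, and it takes a route that is genuinely different in its geometric decomposition from the paper's. The paper covers $K$ by finitely many open cylinders (each being the union of all lines of a fixed direction meeting $B$, with compactness of $K$ giving the finite subcover), and then applies Cartesian Fubini along the axis of each cylinder. You instead fix a single center $y\in B$ and use polar coordinates about $y$: every line through $y$ automatically meets $B$, so the 1-dimensional zero bound of \Cref{prop:numberofzeros} applies to every radial line, and the polar change of variables does the rest. Your pencil-of-lines approach is arguably slicker, since it eliminates the compactness/covering step entirely; the paper's cylinder decomposition is a bit more flexible (it does not require choosing a base point and extends naturally to situations where one wants to vary the direction field), but for this particular conclusion either works. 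One small point you leave implicit, which the paper states explicitly: one first needs $Z_f$ to be Lebesgue measurable before invoking Fubini or the polar coordinate formula; this is immediate since $f$ is continuous and $Z_f=f^{-1}(0)$ is closed.
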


\begin{proof}
   First of all, $f$ is continuous so that $Z_f$ is closed, hence measurable.
   The union of all affine lines of a fixed direction meeting $B$ form an open cylinder $U$.
   By compactness, $K$ is covered by finitely many such cylinders $U$.
   Now $Z_f \cap U$ is measurable, whence we can apply Fubini's theorem to its characteristic function.
   By \Cref{prop:numberofzeros}, this gives $\cL^d(Z_f \cap U)=0$.
\end{proof}

\subsection{Hausdorff measure of the zero set}

\begin{theorem} \label{prop:Hausdorff}
  Let $(M^{|\infty},K,b)$ be admissible data and $\nn$ the associated integer.
  Let $f : K \to \R$ be any $M^{|\nn+1}$-smooth $\cC^\infty$-function such that $\|f\|_{K} \ge b$.
  Then the zero set $Z_f$ is contained in a countable union of compact $\cC^\infty$-hypersurfaces
  and its $(d-1)$-dimensional Hausdorff measure satisfies
  \begin{equation} \label{eq:Hausdorff}
    \cH^{d-1}(Z_f) \le C\, \nn \, \de_K^{d-1},
  \end{equation}
  where the constant $C>0$ depends only on $d$ and on the ratio $\tfrac{\de_B}{\de_K}$ and where
  $B$ is the ball from \Cref{prop:numberofzeros}.
  In the case $K = \ol B_r$, the constant $C$ depends only on $d$ and $\min\{\frac{b}{6 r M_1},\frac{1}2\}$;
  it blows up as $b \to 0$ or $r \to \infty$.
\end{theorem}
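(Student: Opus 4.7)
The plan is to proceed in two stages, matching assertions (B) and (C) from the introduction.

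\emph{Stage 1 (hypersurface structure).} I first claim that $f$ has no point of infinite flatness in $K$. Given any $p \in K$ and any fixed $b_0 \in B$, the affine line $\ell$ through $p$ and $b_0$ meets $B$, so by \Cref{prop:numberofzeros} the restriction $f|_\ell$ has at most $2(N-1)$ zeros counted with multiplicity; in particular the order of vanishing of $f$ at $p$ along $\ell$ is finite. At each $p \in Z_f$ I then pick orthogonal coordinates $(x',x_d)$ so that $f$ has finite order of vanishing along the $x_d$-axis and apply Malgrange's preparation theorem to obtain a neighborhood $V_p$ on which $f = q \cdot u$, with $q(x',x_d) = x_d^{k_p} + \sum_{j<k_p} a_j(x')\, x_d^j$ a distinguished polynomial of degree $k_p \le 2(N-1)$ and $u$ a non-vanishing $\cC^\infty$ unit. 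Since $Z_f \cap V_p = \{q=0\}$, Yomdin's observation \cite[Lemma~6]{Yomdin:1984dg} (which stratifies the root branches of $q$ into smooth pieces using Sard's theorem) shows that $Z_f \cap V_p$ is contained in a countable union of compact $\cC^\infty$-hypersurfaces. A finite subcover of $K$ by such $V_p$, extracted by compactness, then yields the structural assertion.

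\emph{Stage 2 (Hausdorff bound via Crofton).} Since $Z_f$ is now known to be $(d-1)$-rectifiable, the Cauchy--Crofton formula gives
\[
  \cH^{d-1}(Z_f) \;=\; c_d \int_{S^{d-1}} \int_{v^{\perp}} \#\bigl(Z_f \cap (y + \R v)\bigr)\, dy\, d\sigma(v)
\]
for a dimensional constant $c_d$. For each $v \in S^{d-1}$ I split the $y$-integration over $\pi_v(K)$ into the parts $y \in \pi_v(B)$ and $y \in \pi_v(K) \setminus \pi_v(B)$. The first part contributes at most $2(N-1)\,|\pi_v(B)|$ by \Cref{prop:numberofzeros}. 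The second part I control by covering $\pi_v(K) \setminus \pi_v(B)$ by at most $O((\delta_K/\delta_B)^{d-1})$ translates of $\pi_v(B)$ and transferring the bound $2(N-1)$ to each translate by means of the smooth hypersurface structure from Stage~1 (so that $\#\bigl(Z_f \cap (y+\R v)\bigr)$ varies in a controlled piecewise-constant manner along parallel translates, away from a measure-zero critical set). This yields
\[
  \int_{v^{\perp}} \#\bigl(Z_f \cap (y + \R v)\bigr)\, dy \;\le\; C(d,\delta_B/\delta_K)\,(N-1)\,\delta_K^{d-1},
\]
and integrating over $v$ gives the claimed estimate. For $K = \ol B_r$, the bound $\delta_B \ge \min\{b/(3M_1), r\}$ from \Cref{cor:numberofzeros} forces $\delta_B/\delta_K \ge \min\{b/(6rM_1), 1/2\}$, whence $C$ blows up as $b \to 0$ or $r \to \infty$.

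The main obstacle is Stage~2: \Cref{prop:numberofzeros} controls only lines through $B$, while Crofton averages over all lines meeting $K$. Extending the pointwise bound to lines in $\pi_v(K)\setminus \pi_v(B)$ requires both the smoothness provided by Stage~1 and a geometric covering/translation argument on $v^\perp$; this is exactly what introduces the dependence of $C$ on the ratio $\delta_B/\delta_K$.
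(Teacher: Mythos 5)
Your Stage~1 is essentially the paper's argument: Proposition~\ref{prop:numberofzeros} precludes infinite flatness, Malgrange preparation gives a local distinguished polynomial, and \cite[Lemma~6]{Yomdin:1984dg} yields the countable hypersurface cover. That part is fine.

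Stage~2 has a genuine gap. Proposition~\ref{prop:numberofzeros} bounds $\#(Z_f\cap\ell)$ only for lines $\ell$ that \emph{meet $B$}; it says nothing about parallel translates disjoint from $B$, and there is no a priori bound there (on a line missing $B$ the restricted function $g=f|_\ell$ may have $\|g\|_{[0,1]}$ far below $b/2$, so \Cref{cor:Bang1} gives a weaker, possibly infinite, bound). Your proposed fix --- covering $\pi_v(K)\setminus\pi_v(B)$ by translates of $\pi_v(B)$ and ``transferring'' the bound because $\#(Z_f\cap(y+\R v))$ is piecewise constant in $y$ away from a null set --- does not work: the count \emph{is} locally constant off a measure-zero critical set, but it can jump at every critical value of the projection, and nothing controls the number or size of those jumps as $y$ moves away from $\pi_v(B)$. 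Local constancy is not a conservation law; a compact smooth hypersurface can be met twice by every line through $B$ and arbitrarily many times by nearby parallel lines outside $B$, so the ``transfer'' has no justification.

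The paper avoids this by invoking \cite[Lemma~7]{Yomdin:1984dg}, a Crofton-type estimate that never requires a count on lines missing $B$. The mechanism is different from yours: one does \emph{not} try to bound the full Cauchy--Crofton integral $\int_{S^{d-1}}\int_{v^\perp}\#(Z_f\cap(y+\R v))\,dy\,d\sigma(v)$ by extending the pointwise bound. Instead one integrates the intersection count over the family of lines through a fixed sub-ball of $B$ (parameterized, say, by a base point in $B$ and a direction), where the bound $2(N-1)$ is available by hypothesis; a coarea/solid-angle computation shows this restricted integral already dominates $\cH^{d-1}(Z_f)$ up to a factor controlled by $(\de_B/\de_K)^{d-1}$, because every $z\in Z_f\subseteq K$ sees $B$ under a solid angle bounded below by a constant times $(\de_B/\de_K)^{d-1}$. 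That is where the dependence of $C$ on $\de_B/\de_K$ actually enters --- through the lower bound in the restricted Crofton comparison, not through a count on lines outside $B$. To repair your proof you would need to replace the ``transfer to translates'' step with this restricted integral-geometric argument (or simply cite Yomdin's Lemma~7, as the paper does). Your final remark on $K=\ol B_r$ and the asymptotics of $C$ is correct and matches the paper.
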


\begin{proof}
  The first statement follows from Malgrange's preparation theorem
  and \cite[Lemma 6]{Yomdin:1984dg}.
  Thus,
  we may apply the Crofton-type result \cite[Lemma 7]{Yomdin:1984dg}
  to conclude \eqref{eq:Hausdorff} from \Cref{prop:numberofzeros}.

  If $K = \ol B_r$,
  then we may take $\de_B = \min\{\frac{b}{3 M_1},r\}$, by \Cref{cor:numberofzeros},
  so that $\tfrac{\de_B}{\de_K} = \min\{\frac{b}{6 r M_1},\frac{1}2\}$. This also yields the asymptotics of the constant $C$.
\end{proof}

\begin{remark}
  In \Cref{prop:Hausdorff} (and in the next result),
  we assume that $f$ is of class $\cC^\infty$ in order to apply Malgrange's preparation theorem.
  That $f : K \to \R$ is an $M^{|\nn+1}$-smooth $\cC^\infty$-function means that
  $f$ is of class $\cC^\infty$ and satisfies $\|f\|_{j,K} \le M_j$ for all $0 \le j \le \nn+1$.
\end{remark}

\subsection{Sobolev parameterization of the zero set}

Let $(M^{|\infty},K,b)$ be admissible data and $\nn$ the associated integer.
Let $f : K \to \R$ be any $M^{|\nn+1}$-smooth $\cC^\infty$-function such that $\|f\|_{K} \ge b$.

The collection $\cL$ of all affine lines in $\R^d$ that meet the open ball $B$ from \Cref{prop:numberofzeros}
covers $\R^d$, i.e., $\R^d = \bigcup_{\ell \in \cL} \ell$.
Fix $z_0 \in Z_f$ and any line $\ell \in \cL$ with $z_0 \in \ell$.
By \Cref{prop:numberofzeros}, the multiplicity $n$ of $z_0$ as a zero of $f|_\ell$ is at most $2\nn$.
Let $x_1,\ldots,x_d$ be an orthogonal coordinate system such that $\ell$ coincides with the $x_d$-axis.
By Malgrange's preparation theorem,
there is a rectangular neighborhood $U$ of $z_0$
of the form $U := \{x=(x_1,\ldots,x_d) \in \R^d : \al_j < x_j < \be_j \text{ for all } 1 \le j\le d \}$
such that $f(x) = p(x)u(x)$ for $x \in U$, where $u$ does not vanish on $U$ and
\begin{equation} \label{eq:preparation}
  p(x',x_d) = x_d^n + a_1(x') x_d^{n-1} + \cdots + a_d(x')
\end{equation}
is a polynomial in $x_d$ with $\cC^\infty$-coefficients $a_j$ which are defined on
the projection $U'$ of $U$ onto the first $d-1$ coordinates
$x':=(x_1,\ldots,x_{d-1})$.
Thus,
\[
  Z_f \cap U = Z_p \cap U.
\]

Let $[\ze_1(x'),\ldots,\ze_n(x')]$ be the unordered $n$-tuple of complex roots (with multiplicities)
of the polynomial $p(x',x_d)$ in $x_d$
for any $x' \in U'$.
Note that the number of real roots among the $\ze_j(x')$ may change with varying $x'$.
Let $(\xi_1(x'), \ldots, \xi_n(x'))$ be the $n$-tuple consisting of the real parts of the $\ze_j(x')$,
ordered increasingly such that $\xi_1(x') \le \xi_2(x') \le \cdots \le \xi_n(x')$ for all $x'$.
Since all the real roots among the $\ze_j(x')$ clearly are also among the $\xi_i(x')$
(with correct multiplicities),
it follows that
\begin{align*}
   Z_f \cap U
   &= Z_p \cap U
   \\
   &= \{(x',x_d)\in U : \text{ there is } 1 \le j \le n  \text{ with } x_d = \ze_j(x')\}
   \\
   &\subseteq \{(x',x_d)\in U : \text{ there is } 1 \le i \le n  \text{ with } x_d = \xi_i(x')\}
   \subseteq \bigcup_{i=1}^n \Ga(\xi_i),
\end{align*}
where $\Ga(\xi_i):= \{(x',\xi_i(x')) : x' \in U'\}$ denotes the graph of $\xi_i : U' \to \R$.
The increasing order of the $\xi_i$ implies that each $\xi_i$ is a 
continuous function $U' \to \R$
and so, applying \cite[Remark 9]{ParusinskiRainer15} and \cite[Theorem A.1]{Parusinski:2020aa} (see also \cite{ParusinskiRainerAC}),
we may conclude that each $\xi_i$ is of Sobolev class $\xi_i \in W^{1,p}(U')$, 
for all $1 \le p < \frac{n}{n-1}$,
and $\|\xi_i\|_{W^{1,p}(U')}$ depends uniformly on $f$; see \Cref{rem:uniformity}.
In view of \cite[Theorem 1.2]{MalySwansonZiemer03}, we obtain that
each graph $\Ga(\xi_i)$ is countably $\cH^{d-1}$-rectifiable and
\begin{equation} \label{eq:Hausdorff2}
   \cH^{d-1}(\{(x',\xi_i(x')) : x' \in E\}) = \int_E \sqrt{1 + |\nabla \xi_i(x')|^2} \,dx',
\end{equation}
for all measurable subsets $E \subseteq U'$.
The right-hand side of \eqref{eq:Hausdorff2} depends uniformly
on $f$ (see \Cref{rem:uniformity}).

Now it is easy to conclude

\begin{theorem} \label{prop:Sobolev}
  Let $(M^{|\infty},K,b)$ be admissible data and $\nn$ the associated integer.
  Let $f : K \to \R$ be any $M^{|\nn+1}$-smooth $\cC^\infty$-function such that $\|f\|_{K} \ge b$.
  There is a finite cover of $K$ by rectangular boxes $U$ with corresponding orthogonal coordinates $(x',x_d)$
  such that $Z_f\cap U$ is contained in the graphs $\{x_d = \xi_i(x')\}$ of at most $2\nn$
  continuous functions $\xi_i$
  of Sobolev class $W^{1,p}$,
  for all $1 \le p < \frac{2\nn}{2\nn-1}$,
  whose $W^{1,p}$-norm depends uniformly on $f$ (see \Cref{rem:uniformity}).
  Furthermore, \eqref{eq:Hausdorff2} holds.
\end{theorem}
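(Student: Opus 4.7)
The plan is to upgrade the pointwise construction laid out in the paragraphs preceding the statement into a finite cover of $K$. For each $z_0 \in Z_f$, pick any line $\ell$ through $z_0$ that meets the ball $B$ from \Cref{prop:numberofzeros} (such a line exists because $z_0$ together with any point of $B$ determines one). Rotate coordinates so $\ell$ becomes the $x_d$-axis, and let $n$ be the multiplicity of $z_0$ as a zero of $f|_\ell$; by \Cref{prop:numberofzeros} we get $n \le 2(N-1)$ for free. Malgrange's preparation theorem then furnishes a rectangular neighborhood $U$ of $z_0$ on which $f = p \cdot u$ with $u$ non-vanishing and $p$ monic in $x_d$ of degree $n$, with $\cC^\infty$-coefficients $a_1,\ldots,a_n$ on the projection $U'$. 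The continuous functions $\xi_1 \le \cdots \le \xi_n$ on $U'$ obtained by ordering the real parts of the roots of $p(x',\cdot)$ then satisfy $Z_f \cap U \subseteq \bigcup_i \Ga(\xi_i)$, as already shown.

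For each $z \in K \setminus Z_f$, continuity of $f$ provides a rectangular box $U_z$ around $z$ on which $f$ is non-zero, so $Z_f \cap U_z = \emptyset$ trivially. Compactness of $K$ lets me extract a finite subcover from the union of the $U$'s and $U_z$'s just constructed, which is the cover required by the theorem.

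The key technical step is then the Sobolev regularity of the $\xi_i$. Since the $a_j$ are $\cC^\infty$ on $U'$, \cite[Remark 9]{ParusinskiRainer15} combined with \cite[Theorem A.1]{Parusinski:2020aa} gives $\xi_i \in W^{1,p}(U')$ for every $1 \le p < n/(n-1)$, which yields the claimed range $p < (2N-2)/(2N-3)$ in view of $n \le 2(N-1)$. Those same references show that the $W^{1,p}$-norm of $\xi_i$ is controlled by a finite jet of the coefficients $a_j$. The identity \eqref{eq:Hausdorff2} then follows by applying \cite[Theorem 1.2]{MalySwansonZiemer03} to the graphs of the $\xi_i$, and its right-hand side inherits the same uniform dependence.

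The part I expect to require the most care is the uniformity claim: since Malgrange's preparation theorem is not canonical, the dependence of $p$ and $u$, and hence of $a_j$, on $f$ must be controlled through a finite-jet argument in order to conclude that the $W^{1,p}$-norm of $\xi_i$ and the integral in \eqref{eq:Hausdorff2} depend uniformly on $f$. This bookkeeping is precisely what is recorded in \Cref{rem:uniformity}, so I would invoke that remark at the end rather than redo the tracking of constants through every step.
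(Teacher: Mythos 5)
Your proposal is correct and follows the same route as the paper: restrict $f$ to a line through $z_0$ meeting the ball $B$ from \Cref{prop:numberofzeros} to bound the multiplicity by $2(N-1)$, apply Malgrange's preparation theorem to factor $f=pu$ on a rectangular box, order the real parts of the roots to get continuous $\xi_i$, invoke \cite[Remark 9]{ParusinskiRainer15} and \cite[Theorem A.1]{Parusinski:2020aa} for the $W^{1,p}$ regularity with $p<n/(n-1)$, then \cite[Theorem 1.2]{MalySwansonZiemer03} for \eqref{eq:Hausdorff2}, and delegate the non-canonicity of Malgrange to \Cref{rem:uniformity}. You are slightly more explicit than the paper in adding boxes around points off $Z_f$ and extracting the finite subcover by compactness, but these are the same steps the paper leaves implicit.
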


Note that the Sobolev regularity in the results of \cite{ParusinskiRainer15,Parusinski:2020aa}
used above is optimal.

\begin{remark}
  It is evident from the above arguments
  that the statement of \Cref{prop:Sobolev} remains true for $\cC^\infty$-functions $f : K \to \R$
  that satisfy
  \[
       \|f\|_{j,K} \le \frac{1}{2 \de_K^j} \|f\|_{0,K}, \quad \text{ for some } j \ge 2,
  \]
  where $2\nn$ is replaced by $j-1$.
  Use \cite[Theorem 3(ii)]{Yomdin:1984dg}.
\end{remark}

\begin{remark} \label{rem:uniformity}
  In general,
  the coefficients $a_j$ of the polynomial \eqref{eq:preparation} will no longer be $M^{|\nn+1}$-smooth (cf.\ \cite{Acquistapace:2014wf}).
  But the choice of the functions $a_j$ (and $u$) can be made to depend linearly and continuously (with respect to the
  Whitney $\cC^\infty$ topology) on $f$ (cf.\ \cite{Mather:1968vt}).
  Furthermore, we have uniform bounds for the $W^{1,p}$-norm of the functions $\xi_i$ in terms of
  the $\cC^{2\nn}$-norm of the $a_j$ (cf.\ \cite{ParusinskiRainer15} and \cite{Parusinski:2020aa}).
\end{remark}

\subsection{Further remarks}

Let $(M^{|\infty},K,b)$ be admissible data and $\nn$ the associated integer.

\begin{remark} \label{rem:uniformity2}
    By definition, the number $\nn$ depends only on $(M^{|\infty},K,b)$.
    The above results on $Z_f$ remain unchanged as long as $f$ satisfies all the respective assumptions.
    E.g.,
    the bound on $\cH^{d-1}(Z_f)$ in \eqref{eq:Hausdorff} is the same as long as $f$
    fulfills the assumptions of \Cref{prop:Hausdorff}.

    Let $f : K \to \R$ be an $M^{|\nn+1}$-smooth function with $\|f\|_K\ge b$.
    If we set $V(f) := \sup_{x,y \in K} |f(x)-f(y)|$, then $V(f) \le 2 \|f\|_{K}$ and hence
    \[
      \|f-c\|_{K} \ge \frac{V(f-c)}{2} = \frac{V(f)}{2} \quad \text{ for all } c \in \R.
    \]
    On the other hand,
    \[
      \|f-c\|_{K} \le M_0 \quad \text{ if } |c| \le M_0 - \|f\|_{K}.
    \]
    So, if $2b \le V(f)$,
    we get the same uniform results for the level sets $Z_{f-c} = f^{-1}(c)$ for all $|c| \le M_0 - \|f\|_K$.
\end{remark}

\begin{remark}
  If we assume that
  $f : K \to \R$ is $(\de_K^{-1}\mu^{|\infty}, M_0)$-smooth with $\|f\|_K \ge b$
  and accordingly
  \begin{equation*}
        \Si_{\mu^{|\infty}}(j_0+1,\infty) > e, \qquad (j_0 = \lceil \log (\tfrac{b}{2M_0})^{-1}\rceil_{\N}),
  \end{equation*}
  instead of \eqref{eq:multdimcond},
  then
  we get all the above results with both
  the $\mu^{|\infty}$-degree
  $\fd_{\mu^{|\infty}}(\frac{b}{2M_0})$ and the constant $C$ in \eqref{eq:Hausdorff}
  independent of $\de_K$.
\end{remark}

\begin{remark} \label{rem:sectioning}
    A closer inspection of the proof of \Cref{prop:numberofzeros} reveals that 
    it would suffice to assume that each restriction $f|_\ell : K \cap \ell \to \R$ of $f$ is $M^{|\nn+1}$-smooth, 
    where $\ell$ is any affine line intersecting $K$. 
    But this is not far from the assumption that $f : K \to \R$ is $M^{|\nn+1}$-smooth:
    for simplicity assume that $f : K \to \R$ is of class $\cC^\infty$ (which is needed in \Cref{prop:Hausdorff} and 
    \Cref{prop:Sobolev} anyway). Then the uniformity of the bounds and the polarization inequality (\cite[7.13.1]{KM97}), 
    \[
        \sup_{v \in  \ol B_1} |d^j_v f(x)| \le \|d^jf(x)\|_{L^j(\R^d,\R)} \le (2e)^j \sup_{v \in  \ol B_1} |d^j_v f(x)|,
    \]
    where $d^j_vf(x):= \p_t^j|_{t=0} f(x+tv)$, 
    imply that  $f : K \to \R$ is $M^{|\nn+1}$-smooth, after slight modification of $M^{|\nn+1}$.
    (Note that, by results of Boman \cite{Boman67}, the assumption that the function is $\cC^\infty$ is actually not necessary, at least if 
        the domain of the function is open.)
    
    This is somewhat reminiscent of a result of Bochnak and Siciak \cite{Bochnak70,BochnakSiciak71,Siciak70}
    that a function is real analytic
    if its restrictions to all affine lines are real analytic. 
    For general (even quasianalytic) weights, this is however not true if the bounds are not uniform (i.e. they depend on the 
    affine lines); see \cite{Jaffe16} and \cite{Rainer:2019aa}.
\end{remark}

\section{Remez inequality for functions with controlled derivatives} \label{sec:Remez}

In this section, we prove a Remez-type inequality for $M^{|N}$-smooth (and $\mu^{|N}$-smooth) functions in several variables
and derive several consequences.
Our results are based on a univariate version due to \cite{NazarovSodinVolberg04} which we recall in slightly modified form
in \Cref{thm:NSV}.

\subsection{Definitions and conventions}

Let $\mu^{|\infty}= (\mu_j)_{j\ge 1}$ be an admissible weight.
Let us assume that there is an increasing continuous
function $\tilde \mu : [1,\infty) \to (0,\infty)$ that is (piecewise) $\cC^1$ such that
\[
  \mu_j = \tilde \mu(j), \quad j\ge 1.
\]
This is no real restriction, since we may always take $\tilde \mu$ piecewise affine and work consistently with the left derivative
at points, where $\tilde \mu$ is not differentiable.

Once we have $\tilde \mu$, we define (following \cite{NazarovSodinVolberg04})
\begin{equation*}
   \ga_{\tilde \mu}(n) := \sup_{1 \le s\le n} \frac{s  \tilde \mu'(s)}{\tilde \mu(s)} \quad \text{ and } \quad \Ga_{\tilde \mu}(n):= 4 e^{4+\ga_{\tilde \mu}(n)},
\end{equation*}
for all positive integers $n$.
Note that $\ga_{\tilde \mu}$ and $\Ga_{\tilde \mu}$ depend on the choice of $\tilde \mu$ which is not unique.

In this section, we will again make use of the terminology introduced in \Cref{sec:finitedeterminacy} (see \Cref{def:admissibledata}).
But here it is better to replace $b$ by $2 b$.
So let \emph{$(M^{|\infty},K,2b)$ be admissible data and $\nn$ the associated integer}.
Recall that this means the following:
$M^{|\infty}$ is a full admissible weight, $K \subseteq \R^d$ a convex body, and $b>0$ such that
\begin{equation} \label{eq:multdimcond2b}
   \Si_{\mu^{|\infty}}(j_0+1,\infty) > \de_K e, \qquad (j_0 = \lceil \log (\tfrac{b}{M_0})^{-1}\rceil_{\N}).
\end{equation}
The associated nonnegative integer $\nn$ is given by
\begin{equation*}
   \nn:= \fd_{\de_K \mu^{|\infty}}(\tfrac{b}{M_0}).
\end{equation*}
It satisfies \eqref{eq:fdet} and \eqref{eq:fdet2} with $b$ replaced by $2b$,
in particular,
\begin{equation} \label{eq:fdetR}
  \fd_{\de_K \mu^{|\infty}}(\tfrac{b}{M_0}) = \fd_{\de_K \mu^{|\nn+1}}(\tfrac{b}{M_0}) = \nn.
\end{equation}

In all occurrences of the Remez inequality for functions with controlled derivatives,
the constant $\Ga_{\tilde \mu}(2 \nn)^{2\nn}$ will appear.
If $\fd_{\de_K \mu^{|\infty}}(\frac{b}{M_0}) =\nn =0$ it is undefined.
For ease of notation, we set
\begin{equation}
   \CN:= \Ga_{\tilde \mu}(2 \nn)
\end{equation}
with the interpretation $\CN^{2\nn}=: \infty$ if $\nn=0$; see \Cref{rem:Remezinfinity}.

\subsection{Remez inequality for univariate functions}

We recall and slightly modify \cite[Theorem B]{NazarovSodinVolberg04}.
The statement in \cite{NazarovSodinVolberg04} involves the so-called Bang degree
$\fn_f$ (which depends on $f$)
and it is formulated for a quasianalytic full admissible weight $M^{|\infty}$ with $M_0=1$.
Here we work with $\fd_{\mu^{|\infty}}(\frac{b}{M_0}) = \fd_{\mu^{|\nn+1}}(\frac{b}{M_0})$ (which is independent of $f$) instead of $\fn_f$.

\begin{theorem} \label{thm:NSV}
  Let $(M^{|\infty},[0,1],2b)$ be admissible data and $\nn$ the associated integer.
  Let $f: [0,1] \to \R$ be any $M^{|2\nn}$-smooth function such that $\|f\|_{[0,1]}\ge b$.
  Then for any interval $I \subseteq [0,1]$ and any Lebesgue measurable set $E \subseteq I$ with $|E|>0$ we have
  \begin{equation} \label{eq:NSV}
     \|f\|_I \le \Big(\frac{\CN \, |I|}{|E|}\Big)^{2 \nn} \|f\|_E.
  \end{equation}
\end{theorem}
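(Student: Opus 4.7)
The plan is to deduce the statement from \cite[Theorem B]{NazarovSodinVolberg04} by matching notations and then applying the monotonicities of $\fd_{\mu^{|\infty}}$ and $\Ga_{\tilde\mu}$. The NSV theorem, in its original formulation (with $M_0=1$), yields
\[
\|f\|_I \le \Bigl(\frac{\Ga_{\tilde\mu}(2\fn_f)\,|I|}{|E|}\Bigr)^{2\fn_f}\|f\|_E,
\]
where $\fn_f$ is the Bang degree of $f$. By the discussion in \Cref{sec:degree}, the Bang degree coincides in our language with $\fd_{\mu^{|\infty}}\!\bigl(\|f\|_{[0,1]}/M_0\bigr)$. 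I would normalize by passing to $g := f/M_0$, which is $(\mu^{|2(N-1)},1)$-smooth with $\|g\|_{[0,1]} \ge b/M_0$, placing us in the NSV setting.

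The first modification is to replace $\fn_f$ by the data-dependent quantity $N-1$. Since $\|g\|_{[0,1]} \ge b/M_0$ and $\fd_{\mu^{|\infty}}$ is decreasing in its second argument,
\[
\fn_g = \fd_{\mu^{|\infty}}\bigl(\|g\|_{[0,1]}\bigr) \le \fd_{\mu^{|\infty}}\bigl(\tfrac{b}{M_0}\bigr) = N-1,
\]
using \eqref{eq:fdetR} (here $\de_K=1$). Since $\Ga_{\tilde\mu}$ is increasing, $\Ga_{\tilde\mu}(2\fn_g) \le \Ga_{\tilde\mu}(2(N-1)) = \CN$. The base $\CN\,|I|/|E|$ is at least $\CN \ge 4e^4 > 1$ (as $E\subseteq I$), so enlarging the exponent from $2\fn_g$ to $2(N-1)$ only increases the right-hand side, yielding \eqref{eq:NSV}.

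The main obstacle is the second modification: justifying that the NSV argument still applies under the weaker hypothesis of $M^{|2(N-1)}$-smoothness (rather than $M^{|\infty}$-smoothness). Inspecting the NSV proof, the only place derivatives enter is through Bang's estimate on the number of zeros of $f$ and through a Rolle-type iteration of length at most $2\fn_f$; all these fit comfortably within the finite-order Bang framework of \Cref{sec:univariate}. Concretely, the paper's \Cref{thm:Bang}, \Cref{cor:Bang1}, and \Cref{cor:Bang2} are formulated with a finite cutoff $N$ via the harmless convention $\mu_j = \infty$ (equivalently $1/\mu_j=0$) for $j>N$, which is tailored precisely so that plugging the cutoff $2(N-1)$ into the NSV argument is a matter of bookkeeping rather than new ideas. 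Since $\fn_g \le N-1$, only derivatives of $g$ (and hence of $f$) of order at most $2(N-1)$ ever appear, so the $M^{|2(N-1)}$-smoothness hypothesis is exactly what is needed.
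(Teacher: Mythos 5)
Your plan is to treat \cite[Theorem B]{NazarovSodinVolberg04} as a black box, assert that the Bang degree $\fn_g$ equals $\fd_{\mu^{|\infty}}(\|g\|_{[0,1]})$, and then conclude by monotonicity of $\fd$ and $\Ga_{\tilde\mu}$. The central problem is that this identification of $\fn_g$ with $\fd_{\mu^{|\infty}}(\|g\|_{[0,1]})$ is asserted, not proved. \Cref{sec:degree} only says the quantity $\fd_{\mu^{|N}}(\|f\|_{[0,1]}/M_0)$ is ``like'' the Bang degree, not that they coincide. In fact, the Bang degree in \cite{NazarovSodinVolberg04} is built from the auxiliary Bang functions $b_n$ --- essentially from $\min_{[0,1]} b_0$ --- rather than directly from $\|f\|_{[0,1]}$, and there is no a priori reason why $\fn_g \le N-1$ should hold. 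The paper's proof of \Cref{thm:NSV} identifies this as the one place where the definition of $\fn_f$ enters the NSV argument and replaces it with the estimate \eqref{eq:NSV1}, namely $\min_{[0,1]} b_0 > e^{-N}$, which it then proves from scratch via a Bang-type iteration using \Cref{lem:Bang}(iii), the definition of $j_0$, and the equality $N-1 = \fd_{\mu^{|\infty}}(b/M_0)$. This estimate is the genuine content of the ``slight modification''; your proposal skips it entirely and in so doing skips the only nontrivial step.

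There are two further, related difficulties with the black-box approach. First, NSV Theorem B is formulated for quasianalytic weights ($\sum_j 1/\mu_j = \infty$), whereas \Cref{thm:NSV} is stated for any admissible data --- including non-quasianalytic $M^{|\infty}$ as long as \eqref{eq:multdimcond2b} holds --- so one cannot just cite the theorem. Second, your finite-determinacy argument is circular: you propose to apply NSV Theorem B (which presupposes a $\cC^\infty$ quasianalytic function) to derive $\fn_g \le N-1$, and then use $\fn_g \le N-1$ to conclude that only derivatives of order $\le 2(N-1)$ matter; but the hypothesis of \Cref{thm:NSV} only provides an $M^{|2(N-1)}$-smooth $f$, so the initial application of NSV is not licensed. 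The only way out of both issues is to actually retrace the NSV argument at finite order with $\fn_f$ replaced throughout by $N-1$ --- which is precisely what the paper does, and which forces one to prove \eqref{eq:NSV1}. So while your normalization step $\bar f := f/M_0$ and the observation that the base $\CN|I|/|E| \ge 1$ (so enlarging the exponent is harmless) match the paper, the proof as written has a genuine gap at its crux.
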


\begin{remark}\label{rem:Remezinfinity}
  If $\fd_{\mu^{|\infty}}(\frac{b}{M_0}) = \nn =0$, then, by the interpretation $\CN^{2\nn}= \infty$,
  \eqref{eq:NSV} is trivially true.
\end{remark}

\begin{proof}[Proof of \Cref{thm:NSV}]
  First of all, we may assume that $M_0= 1$, by dividing $f$ by $M_0$.
  In fact, $\bar f:=\frac{1}{M_0} f$ is $(\mu^{|2\nn},1)$-smooth and $\|\bar f\|_{[0,1]} \ge \frac{b}{M_0}=:\bar b>0$
  so that \eqref{eq:multdimcond2b}, \eqref{eq:NSV}, and the integer $\nn$ remain unchanged.

  By \Cref{rem:Remezinfinity}, we may assume that $\nn>0$ and
  follow the proof of \cite{NazarovSodinVolberg04};
  it involves only derivatives up to order $2 \fd_{\mu^{|\infty}}(b) =  2 \fd_{\mu^{|\nn+1}}(b) = 2\nn$ (cf.\
  \eqref{eq:fdetR} with $K=[0,1]$ and $M_0=1$).

  In the only place, where the definition of $\fn_f$ actually plays a role in the argument,
  we use the following estimate (cf.\ \cite[p.\ 72]{NazarovSodinVolberg04}):
  \begin{equation} \label{eq:NSV1}
     \min_{t \in [0,1]} b_0(t) = \min_{t \in [0,1]} \sup_{j \ge 0}\frac{|f^{(j)}(t)|}{e^j M_j}  > e^{- \nn-1}.
  \end{equation}
  Let us justify \eqref{eq:NSV1}. Suppose it is not true, i.e.,
  $\min_{t \in [0,1]} b_0(t) \le  e^{-\nn-1}$.
  On the other hand, $\max_{t \in [0,1]} b_0(t) \ge b \ge e^{-j_0}$, by the definition of $j_0$ (recall that $M_0=1$).
  We have $j_0 < \fd_{\mu^{|\nn+1}}(b) + 1=\nn+1$ (cf.\ \Cref{sec:degree}),
  that is $e^{-\nn-1}< e^{-j_0}$.
  Since $b_0$ is continuous (cf.\ \Cref{lem:Bang}),
  there is a monotonic sequence $x_j \in [0,1]$ such that $b_0(x_j) = e^{-j}$ for all
  $j_0 \le j \le \nn+1$ (cf.\ the proof of \Cref{thm:Bang}).
  By \Cref{lem:Bang}, we have
  \(
    |x_{j}-x_{j-1}|> \frac{1}{e \mu_j}
  \)
  for $j_0+1 \le j\le \nn+1$
  so that
  \[
  1 \ge \sum_{j=j_0 +1}^{\nn+1} |x_{j}-x_{j-1}| > \frac{1}{e} \sum_{j=j_0 +1}^{\nn+1}\frac{1}{\mu_j},
  \]
  and thus $\nn = \fd_{\mu^{|\infty}}(\frac{b}{M_0}) \ge \nn+1$, a contradiction.
  Hence \eqref{eq:NSV1} is proved.
\end{proof}

\begin{remark} \label{eq:olGa}
  As pointed out in \cite{NazarovSodinVolberg04},
  if $\ol \ga_{\tilde \mu}:=\sup_{s\ge 1} \frac{s\tilde \mu'(s)}{\tilde \mu(s)} < \infty$
  (as is the case for $\tilde \mu(s) =s$ or $\tilde \mu(s) = s (\log(s+e))^\de$ if $0<\de\le 1$),
  then in \eqref{eq:NSV} the constant $\CN$ can be replaced by
  $\ol \Ga_{\tilde \mu} = 4 e^{4+\ol \ga_{\tilde \mu}}$, provided that $\nn \ne 0$ (see \Cref{rem:Remezinfinity}).
\end{remark}

\subsection{Remez inequality for multivariate functions}

Next we prove a Remez-type inequality for multivariate functions with controlled derivatives.
The proof is inspired by the technique of \cite{Brudnyi:1973un}.

\begin{theorem} \label{thm:Remez}
  Let $(M^{|\infty},K,2b)$ be admissible data and $\nn$ the associated integer.
  Let $f : K \to \R$ be any $M^{|2\nn}$-smooth function and
  $L \subseteq K$ any convex body
  such that $\|f\|_{L} \ge b$.
  If $E \subseteq L$ is a Lebesgue measurable subset with $|E|>0$, then
  \begin{equation*}
     \|f\|_{L} \le \Big(
     \frac{\CN\, |L|^{1/d}}{|L|^{1/d} - (|L|-|E|)^{1/d}}
     \Big)^{2 \nn} \|f\|_E.
  \end{equation*}
\end{theorem}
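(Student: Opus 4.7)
The plan is to mimic the Brudnyi--Ganzburg reduction to one dimension: restrict $f$ to chords of $L$ through a maximum point, apply the univariate Remez inequality (\Cref{thm:NSV}) on each chord, and aggregate the resulting one-dimensional bounds by means of a standard geometric lemma. If $N-1=0$, then $\CN^{2(N-1)}=\infty$ by convention and the inequality is trivial, so we assume $N\ge 2$.

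First, pick $x_0\in L$ with $|f(x_0)|=\|f\|_L$ (possible by compactness). For each $\omega\in S^{d-1}$, let $I_\omega$ be the chord of $L$ through $x_0$ in direction $\omega$, parametrized affinely by $\ell_\omega:[0,1]\to L$ with $|\dot\ell_\omega|=|I_\omega|\le \de_K$. Setting $h_\omega(s):=f(\ell_\omega(s))$ and expanding $h_\omega^{(j)}$ by the multinomial formula yields $\|h_\omega^{(j)}\|_{[0,1]}\le |I_\omega|^j\|f\|_{j,L}\le \de_K^j M_j$ for $0\le j\le 2(N-1)$, so $h_\omega$ is $(\de_K\mu^{|2(N-1)},M_0)$-smooth with $\|h_\omega\|_{[0,1]}=\|f\|_L\ge b$. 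One verifies that $(\de_K M^{|\infty},[0,1],2b)$ is admissible data with the same associated integer $N$ (since $\Si_{\de_K\mu^{|\infty}}(j_0+1,\infty)>e$ is equivalent to \eqref{eq:multdimcond2b}), and that $\CN$ is invariant under $\mu\mapsto \de_K\mu$, as the logarithmic derivative $s\tilde\mu'(s)/\tilde\mu(s)$ is scale-invariant. Applying \Cref{thm:NSV} to $h_\omega$ on $[0,1]$ with $F_\omega:=\ell_\omega^{-1}(E)$, and using $|F_\omega|=|E\cap I_\omega|/|I_\omega|$ together with $\|h_\omega\|_{F_\omega}\le \|f\|_E$, yields after rearrangement
\[
\frac{|E\cap I_\omega|}{|I_\omega|}\le \CN\,\be, \qquad \be:=\Bigl(\frac{\|f\|_E}{\|f\|_L}\Bigr)^{1/(2(N-1))},
\]
for every chord through $x_0$.

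If $\CN\,\be\ge 1$, then $\|f\|_L\le \CN^{2(N-1)}\|f\|_E$ and the claim follows from the trivial estimate $|L|^{1/d}\ge |L|^{1/d}-(|L|-|E|)^{1/d}$. Otherwise, I would invoke the Brudnyi--Ganzburg geometric lemma \cite{Brudnyi:1973un}: if $L\subseteq \R^d$ is a convex body, $x_0\in L$, $E\subseteq L$ is measurable, and $|E\cap \ell_\omega|\le \eta\,|I_\omega|$ for every line through $x_0$ with $\eta<1$, then $(|L|-|E|)^{1/d}\ge (1-\eta)|L|^{1/d}$. This follows from the bathtub principle applied ray-by-ray against the weight $r^{d-1}$, combined with comparison against the homothetic copy $x_0+(1-\eta)(L-x_0)$ of volume $(1-\eta)^d|L|$. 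Applied with $\eta=\CN\,\be$ it gives
\[
\CN\,\be \ge \frac{|L|^{1/d}-(|L|-|E|)^{1/d}}{|L|^{1/d}},
\]
which after isolating $\|f\|_L/\|f\|_E$ and raising to the $2(N-1)$-th power yields the asserted inequality.

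The main technical ingredient is the geometric lemma; everything else is bookkeeping: matching the associated integer $N$ under the scaling $\mu\mapsto \de_K\mu$, checking the invariance of $\CN$, and handling the trivial regime $\CN\,\be\ge 1$ in which the per-chord bound is void. I do not expect genuine difficulty, since \Cref{thm:NSV} already supplies the one-dimensional mechanism and the Brudnyi--Ganzburg passage from chord-wise to volume bounds is classical.
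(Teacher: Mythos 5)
Your overall strategy matches the paper's: restrict $f$ to one-dimensional sections through a point $x_0$ where $\|f\|_L$ is attained, apply \Cref{thm:NSV} to the restriction, and aggregate into a volume estimate via a Brudnyi--Ganzburg-type geometric lemma. The bookkeeping --- invariance of $\CN$ and of the associated integer $N$ under $\mu\mapsto\de_K\mu$, the trivial case $N-1=0$, and the scale-invariant per-section inequality $|F_\omega|\le\CN\,\be$, which neatly sidesteps the paper's intermediate normalization to $\|f\|_E\le 1$ --- is all in order.

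However, the geometric lemma is misstated, and this is a genuine gap, not a bookkeeping slip. The per-chord hypothesis $|E\cap I_\omega|\le\eta\,|I_\omega|$ over \emph{full chords} through $x_0$ does \emph{not} imply $(|L|-|E|)^{1/d}\ge(1-\eta)|L|^{1/d}$. The bathtub sketch you give actually proves the \emph{ray} version: if $\cL^1(E\cap\ell)\le\eta\,\cL^1(L\cap\ell)$ for every \emph{half-line} $\ell$ emanating from $x_0$, then radial rearrangement of $L\setminus E$ produces a set containing $x_0+(1-\eta)(L-x_0)$. The chord hypothesis is strictly weaker: it couples each ray with its opposite and permits shifting the allotted mass of $E$ onto the longer of the two rays, which for $d\ge2$ breaks the inequality. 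Concretely, take $L$ a disk, $x_0$ near the boundary, and let $L\setminus E$ be the star-shaped volume minimizer subject to the per-chord constraints; on near-diametral chords one finds $s(\omega)^d+s(-\omega)^d<(1-\eta)^d\bigl(R(\omega)^d+R(-\omega)^d\bigr)$ (with $R$ the radial distance to $\p L$ and $s$ the radial function of $L\setminus E$), hence $|L\setminus E|<(1-\eta)^d|L|$. The correct statement is the half-line version, which is what is proved in \cite[Lemma 3 and Remark 2]{Brudnyi:1973un} and what the paper uses. The fix keeps your setup: since \Cref{thm:NSV} applies to any subinterval $I\subseteq[0,1]$, take $I=[t_0,1]$ (resp.\ $[0,t_0]$) where $\ell_\omega(t_0)=x_0$ to get the per-half-line bound $\cL^1(E\cap\ell)/\cL^1(L\cap\ell)\le\CN\,\be$, then invoke the half-line lemma. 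With that replacement the argument is the paper's.
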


\begin{proof}
  Let $\cF(E,L) = \cF_{M^{|2\nn}}(E,L,b)$ denote the set of all $M^{|2\nn}$-smooth functions $f : K \to \R$
  satisfying $\|f\|_{L}  \ge b$ and $\|f\|_E\le 1$.
  For $\la>0$ consider
  \[
    \cR(\la,L) = \cR_{M^{|2\nn}}(\la,L,b) := \sup_{\substack{E \subseteq L \\ |E|\ge \la}} \sup_{f \in \cF(E,L)} \|f\|_{L}.
  \]
  We claim that
  \begin{equation} \label{eq:BG}
    \cR(\la,L) \le
     \Big(
     \frac{\CN\, |L|^{1/d}}{|L|^{1/d} - (|L|-\la)^{1/d}}
     \Big)^{2 \nn}.
  \end{equation}
  Fix a measurable subset $E \subseteq L$ with $|E|\ge \la$ and $f \in \cF(E,L)$.
  There is
  $x_0 \in L$ with $\|f\|_{L} = |f(x_0)|$.
  Let $\ell$ be any half-line emanating from $x_0$
  and let $x_1$ be the (other) intersection point of $\ell$ with $\p L$.
  Define $g : [0,1] \to \R$ by $g(t):= f(x_0 + t(x_1-x_0))$.
  As seen in the proof of \Cref{prop:numberofzeros}, $g : [0,1] \to \R$ is $(\de_K \mu^{|2\nn},M_0)$-smooth
  and $\|g\|_{[0,1]} \ge |g(0)| = |f(x_0)| \ge b$.
  Thus (in view of \eqref{eq:multdimcond2b} and since $\|f\|_E \le 1$) \Cref{thm:NSV} implies
  \[
   \|f\|_{L} = |f(x_0)|=|g(0)| \le \Big(\frac{\CN\, \cL^1(L \cap \ell)}{\cL^1(E \cap \ell)}\Big)^{2 \nn},
  \]
  because of the scaling properties of the $1$-dimensional Lebesgue measure $\cL^1$ (induced on $\ell$);
  note that $\ga_{\de_K \tilde \mu}=\ga_{\tilde \mu}$ and hence $\Ga_{\de_K \tilde \mu}=\Ga_{\tilde \mu}$.
  Taking the essential infimum over all half-lines emanating from $x_0$,
  the supremum over all $f \in \cF(E,L)$, and the
  supremum over all measurable $E \subseteq L$ with $|E|\ge \la$, we find
  \[
   \cR(\la,L) \le
    \Big(\CN \sup_{\substack{E \subseteq L \\ |E|\ge \la}}
    \essinf_{\ell} \frac{\cL^1(L \cap \ell)}{\cL^1(E \cap \ell)}\Big)^{2 \nn},
  \]
  and, using \cite[Lemma 3 and Remark 2]{Brudnyi:1973un}, we conclude \eqref{eq:BG}.

  Now we may prove the statement of the theorem.
  Let $f$, $L$, and $E$ be as in the assumptions of the theorem.
  Since $|E|>0$, we have $\|f\|_{E} > 0$, by \Cref{cor:zeromeasure}.
  Then $F:= \frac{f}{\|f\|_E}$ is $\frac{1}{\|f\|_E} M^{|2\nn}$-smooth and satisfies
  $\|F\|_{L} \ge \frac{b}{\|f\|_E}$ and $\|F\|_E=1$.
  Thus $F \in \cF_{\frac{1}{\|f\|_E}M^{|2\nn}}(E,L,\frac{b}{\|f\|_E})$; note that \eqref{eq:multdimcond2b}, $\nn$, and $\CN$
  remain unchanged.
  Then \eqref{eq:BG} implies
  \begin{equation*}
     \|F\|_{L} \le \Big(
     \frac{\CN\, |L|^{1/d}}{|L|^{1/d} - (|L|-|E|)^{1/d}}
     \Big)^{2\nn}.
  \end{equation*}
  Since $\|F\|_{L} = \frac{\|f\|_L}{\|f\|_E}$, this completes the proof.
\end{proof}

\subsection{An important consequence} \label{sec:homogeneous}

The next result is a simple but important consequence of \Cref{thm:Remez}.

For its formulation, it is convenient to adapt our terminology.
We say that the pair \emph{$(\mu^{|\infty},K)$ is admissible data with associated integer $\nn$} if
$\mu^{|\infty} = (\mu_j)_{j\ge 1}$ is an admissible weight and $K \subseteq \R^d$ a convex body such that
\begin{equation} \label{eq:multdimcond1}
   \Si_{\mu^{|\infty}}(1,\infty) > \de_K e
\end{equation}
and
\begin{equation*}
   \nn := \fd_{\de_K \mu^{|\infty}}(1).
\end{equation*}
As before, $\CN:= \Ga_{\tilde \mu}(2 \nn)$ and  $\CN^{2\nn}=: \infty$ if $\nn=0$.

We will specialize \Cref{thm:Remez} to the case $b = M_0 = \|f\|_K$, that is to
$(\mu^{|2\nn},\|f\|_K)$-smooth functions $f : K \to \R$. 
Recall that, by the definition in \Cref{ssec:Msmooth}, 
$f : K \to \R$ is called 
$(\mu^{|2\nn},\|f\|_K)$-smooth if $\|f\|_K >0$ and
\begin{equation*}
    \|f\|_{j,K} \le \|f\|_K \cdot \mu_1 \mu_2 \cdots \mu_{j}, \quad 1 \le j \le 2\nn.
\end{equation*}

\begin{theorem} \label{cor:Remez2}
  Let $(\mu^{|\infty},K)$ be admissible data and $\nn$ the associated integer.
  Let $f : K \to \R$ be any $(\mu^{|2\nn},\|f\|_K)$-smooth function.
  If $E \subseteq K$ is a Lebesgue measurable subset with $|E|>0$, then
  \begin{equation} \label{eq:Remezinequality1}
     \|f\|_{K} \le \Big(
     \frac{\CN\, |K|^{1/d}}{|K|^{1/d} - (|K|-|E|)^{1/d}}
     \Big)^{2 \nn} \|f\|_E.
  \end{equation}
\end{theorem}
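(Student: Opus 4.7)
The plan is to derive \Cref{cor:Remez2} as an essentially immediate specialization of \Cref{thm:Remez}, obtained by choosing $M_0 = b = \|f\|_K$ and $L = K$. The work is entirely a matter of bookkeeping: one must check that, under this choice, the admissibility condition \eqref{eq:multdimcond1} coincides with \eqref{eq:multdimcond2b}, and that the associated integer $N$ of \Cref{thm:Remez} matches the one in the statement.

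First I would dispense with the trivial case $N-1 = 0$: by the convention $C_N^{2(N-1)} =: \infty$ the right-hand side of \eqref{eq:Remezinequality1} is then infinite, so the inequality holds. Assume henceforth $N-1 \ge 1$. Since $(\mu^{|2(N-1)},\|f\|_K)$-smoothness presupposes $\|f\|_K > 0$, I may set $M_0 := \|f\|_K > 0$ and let $M^{|\infty}$ denote the full admissible weight determined by the pair $(\mu^{|\infty},M_0)$, i.e.\ $M_j = \|f\|_K \cdot \mu_1 \cdots \mu_j$. By the definition in \Cref{ssec:Msmooth}, the hypothesis on $f$ is exactly that $f$ is $M^{|2(N-1)}$-smooth (the bound $\|f\|_K \le M_0$ holds with equality).

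Next I would set $b := \|f\|_K = M_0$, so that $b/M_0 = 1$ and
\[
j_0 = \lceil \log (b/M_0)^{-1} \rceil_{\N} = \lceil 0 \rceil_{\N} = 0.
\]
Consequently, the admissibility condition \eqref{eq:multdimcond2b} for the triple $(M^{|\infty},K,2b)$ becomes $\Si_{\mu^{|\infty}}(1,\infty) > \de_K e$, which is precisely \eqref{eq:multdimcond1}; and the integer associated with $(M^{|\infty},K,2b)$ is
\[
\fd_{\de_K \mu^{|\infty}}(b/M_0) + 1 = \fd_{\de_K \mu^{|\infty}}(1) + 1 = N,
\]
matching the $N$ in the statement. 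Hence $(M^{|\infty},K,2b)$ is admissible data with associated integer $N$ in the sense required by \Cref{thm:Remez}.

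Finally, taking $L := K$, the condition $\|f\|_L \ge b$ reduces to $\|f\|_K \ge \|f\|_K$, which is trivially satisfied. \Cref{thm:Remez} then yields, for any measurable $E \subseteq K$ with $|E|>0$,
\[
\|f\|_K \le \Big(\frac{\CN\, |K|^{1/d}}{|K|^{1/d} - (|K|-|E|)^{1/d}}\Big)^{2(N-1)} \|f\|_E,
\]
which is \eqref{eq:Remezinequality1}. No genuine obstacle arises; the only delicate point is the verification that the normalization $M_0 = b = \|f\|_K$ aligns the two notions of ``admissible data'' and their associated integers, which it does by the computation $j_0 = 0$ above.
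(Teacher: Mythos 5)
Your proof is correct and takes exactly the same approach as the paper, which simply says ``Set $L = K$ and $b = M_0 := \|f\|_K$ in \Cref{thm:Remez}.'' You have just spelled out the bookkeeping (the check that $j_0 = 0$, that the two admissibility conditions coincide, and that the associated integers agree) that the paper leaves implicit.
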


\begin{proof}
  Set $L = K$ and $b=M_0 := \|f\|_{K}$ in \Cref{thm:Remez}.
\end{proof}

There is no \emph{a priori} condition on $\|f\|_K$, except $\|f\|_{K}>0$, in \Cref{cor:Remez2}.
Visibly, \eqref{eq:Remezinequality1} is invariant under
the action of $\R^*$ on $f$.

It can happen that $\fd_{\de_K \mu^{|\infty}}(1) = \nn= 0$, in which case \Cref{cor:Remez2} contains no information
(cf.\ \Cref{rem:Remezinfinity}).
In fact, this occurs precisely if $\de_K \mu_1 \le 1/e$.

\subsection{Volume of sublevel sets}

\Cref{cor:Remez2} has several interesting corollaries.
We begin with a bound on the growth of the volume of sublevel sets.

\begin{corollary} \label{cor:Remez3}
Let $(\mu^{|\infty},K)$ be admissible data and $\nn$ the associated integer.
Let $f : K \to \R$ be any $(\mu^{|2\nn},\|f\|_K)$-smooth function.
Then the sublevel set $S_t := \{x \in K : |f(x)|\le t\}$ satisfies
\begin{equation} \label{eq:sublevel}
     |S_t| \le \CN d\, |K|\,
     \Big(\frac{t}{\|f\|_{K}}\Big)^{\frac{1}{2 \nn}}, \quad t>0,
  \end{equation}
with the understanding that the right-hand side is $\infty$ if $\nn=0$.
\end{corollary}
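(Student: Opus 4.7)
The natural approach is to apply the Remez inequality (\Cref{cor:Remez2}) to the set $E := S_t$ itself, on which $|f|$ is by construction small.

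First I would dispose of the trivial cases. If $N-1 = 0$, the right-hand side of \eqref{eq:sublevel} is $\infty$ by convention, so there is nothing to prove. If $\|f\|_K = 0$ then $f \equiv 0$ and the hypothesis that $f$ is $(\mu^{|2(N-1)},\|f\|_K)$-smooth is vacuous (indeed the sharpness statement forces $S_t = K$ and $\|f\|_K > 0$ on both sides, so we may assume $\|f\|_K > 0$). If $|S_t| = 0$ the bound is immediate. Otherwise, by definition $\|f\|_{S_t} \le t$, and \Cref{cor:Remez2} applied with $E = S_t$ yields
\[
 \|f\|_K \le \Big(\frac{\CN \, |K|^{1/d}}{|K|^{1/d} - (|K| - |S_t|)^{1/d}}\Big)^{2(N-1)} t.
\]

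Next I would rearrange this. Setting $\theta := \CN \bigl(t/\|f\|_K\bigr)^{1/(2(N-1))}$, the above inequality is equivalent to
\[
  |K|^{1/d} - (|K|-|S_t|)^{1/d} \le \theta \, |K|^{1/d}.
\]
If $\theta \ge 1$, then $d \,|K|\, \theta \ge |K| \ge |S_t|$, so \eqref{eq:sublevel} holds trivially. Hence we may assume $\theta < 1$, in which case both sides above are non-negative and we obtain
\[
  (|K|-|S_t|)^{1/d} \ge (1-\theta)\, |K|^{1/d}, \qquad \text{i.e.,} \qquad |S_t| \le \bigl(1 - (1-\theta)^d\bigr)|K|.
\]

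The final step is to use the elementary Bernoulli-type inequality $(1-\theta)^d \ge 1 - d\theta$ valid for $\theta \in [0,1]$ and $d \ge 1$, which gives $1 - (1-\theta)^d \le d\theta$. Substituting this and unwinding the definition of $\theta$ produces exactly
\[
  |S_t| \le d\, \theta\, |K| = \CN \, d\, |K|\, \Big(\frac{t}{\|f\|_K}\Big)^{1/(2(N-1))},
\]
as claimed. No step is a real obstacle here; the argument is mechanical once one recognizes the right substitution $E = S_t$, the only mild point being the case analysis in $\theta$ so that one does not lose track of the statement when the Remez estimate becomes vacuous.
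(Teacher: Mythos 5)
Your proposal is correct and takes essentially the same route as the paper: apply \Cref{cor:Remez2} with $E = S_t$ and then invoke a Bernoulli-type elementary inequality to unwind the $d$-th roots (your $(1-\theta)^d \ge 1-d\theta$ is algebraically the same step as the paper's $\theta/d \le 1-(1-\theta)^{1/d}$, just written with respect to a differently-defined $\theta$). Your explicit case split on $\theta \ge 1$ and your handling of $|S_t|=0$ are slightly more careful than the paper's brief remark that $|S_t|\neq 0$ for all $t>0$ (which is not literally true when $\min_K|f|>t$), but these are trivial cases and the substance of the argument is identical.
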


\begin{proof}
    We may assume that $\nn \ne 0$ and that $|S_t| >0$ (the inequality being trivial otherwise).
   Apply \Cref{cor:Remez2} to $E = S_t$.
   Then, putting $\th:= \frac{|S_t|}{|K|}$,
    \[
    \|f\|_{K} \le \Big(
    \frac{\CN}{1 - (1-\th)^{1/d}}
    \Big)^{2 \nn} t,
    \]
    and consequently,
    \[
    \frac{\th}{d}  \le 1 - (1-\th)^{1/d} \le
    \CN\,
    \Big(\frac{t}{\|f\|_{K}}\Big)^{\frac{1}{2 \nn}}.
    \]
  The statement follows.
\end{proof}

\Cref{cor:Remez3} implies useful estimates for the distribution function and the decreasing rearrangement of $f$
(more generally, of $f|_E$ for a measurable subset $E \subseteq K$). 
Recall that the distribution function of $f: K \to \R$ is defined by
\[
    d_f(t):= |\{x \in K : |f(x)|>t\}| = |K|-|S_t|
\]
and the decreasing rearrangement of $f$ by 
\[
    f^*(y) := \inf\{t>0 : d_f(t) \le y\}.
\]

\begin{corollary} \label{cor:distribution}
Let $(\mu^{|\infty},K)$ be admissible data and $\nn$ the associated integer.
Let $f : K \to \R$ be any $(\mu^{|2 \nn},\|f\|_K)$-smooth function.
Let $E\subseteq K$ be a Lebesgue measurable subset with $|E|>0$.
Then 
\begin{equation}
    \label{eq:Remez4keyE}
       (f|_E)^*(|E|\, \la) \ge \|f\|_K \Big(\frac{|E|}{|K|} \cdot \frac{1-\la}{\CN d} \Big)^{2\nn}, \quad \la \in (0,1),
\end{equation}
where the right-hand side is identically zero if $\nn=0$.
\end{corollary}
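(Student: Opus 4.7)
The plan is to invert the sublevel set estimate of Corollary \ref{cor:Remez3}: while that corollary bounds the measure of sublevel sets of $f$ on all of $K$, the decreasing rearrangement of $f|_E$ is controlled by the measure of the intersection $E \cap S_t$, which is trivially bounded by $|S_t|$.

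First, assume $N-1 \ne 0$; otherwise the right-hand side of \eqref{eq:Remez4keyE} is zero by convention and there is nothing to prove. Denote the right-hand side by
\[
   T := \|f\|_K \Big(\frac{|E|}{|K|} \cdot \frac{1-\la}{\CN d}\Big)^{2(N-1)}.
\]
I would fix an arbitrary $t$ with $0 < t < T$ and apply Corollary \ref{cor:Remez3} to get
\[
   |S_t| \le \CN d\, |K| \Big(\frac{t}{\|f\|_K}\Big)^{\frac{1}{2(N-1)}} < \CN d\, |K| \Big(\frac{T}{\|f\|_K}\Big)^{\frac{1}{2(N-1)}} = |E|(1-\la),
\]
where the final equality is just the definition of $T$ solved for $T^{1/(2(N-1))}$.

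Next I would translate this into a bound on the distribution function of $f|_E$. Since $\{x \in E : |f(x)| \le t\} \subseteq S_t$, the above estimate gives
\[
   |E| - d_{f|_E}(t) = |\{x \in E : |f(x)|\le t\}| \le |S_t| < |E|(1-\la),
\]
and hence $d_{f|_E}(t) > |E| \la$. By the very definition of the decreasing rearrangement,
\[
   (f|_E)^*(|E|\la) = \inf\{s > 0 : d_{f|_E}(s) \le |E|\la\} \ge t.
\]
Letting $t \nearrow T$ then yields $(f|_E)^*(|E|\la) \ge T$, which is \eqref{eq:Remez4keyE}.

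The argument is largely mechanical once Corollary \ref{cor:Remez3} is in hand; the only mild subtlety is keeping strict versus non-strict inequalities aligned with the definition of $(f|_E)^*$ (the infimum in the definition forces us to verify $d_{f|_E}(t) > |E|\la$ rather than $\ge$, which is why we work with $t < T$ and pass to the limit at the end), and noting that the case $N - 1 = 0$ is covered by the convention declared just after Corollary \ref{cor:Remez3}.
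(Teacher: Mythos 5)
Your proof is correct and follows essentially the same route as the paper: fix $t$ strictly below the claimed bound, apply Corollary~\ref{cor:Remez3} to get $|S_t| < |E|(1-\lambda)$, deduce $d_{f|_E}(t) > |E|\lambda$ from $\{x\in E: |f(x)|\le t\}\subseteq S_t$, and conclude via the definition of the decreasing rearrangement. The only cosmetic difference is that the paper frames the conclusion as an implication holding for all $t$ in $(0,s_\lambda)$ rather than as a limit $t\nearrow T$, but the content is identical.
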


\begin{proof}
    Assume that $\nn\ne 0$.
    We have 
    \[
        d_{f|_E}(t) := |\{x \in E : |f(x)|> t\}| = |E| - |S_t \cap E|= |E|\, \la_t,
    \]
    where $\la_t := 1- |S_t \cap E|/|E|$.
    Let $s_\la$ denote the right-hand side of \eqref{eq:Remez4keyE}.
    By \Cref{cor:Remez3},
    \begin{equation}\label{eq:distribution}
    d_{f|_E}(t)=|E|\, \la_t > |E|\, \la, \quad\text{ if } t \in (0,s_\la).
    \end{equation}
    Indeed, by \eqref{eq:sublevel},
   \begin{align*}
       |E|(1- \la_t) =  |S_t \cap E| \le |S_t| &\le \CN d\, |K|\,
      \Big(\frac{t}{\|f\|_{K}}\Big)^{\frac{1}{2 \nn}}
      \\
                                               &< \CN d\, |K|\,
      \Big(\frac{s_\la}{\|f\|_{K}}\Big)^{\frac{1}{2 \nn}}
      =|E|(1- \la).
   \end{align*} 
    Now \eqref{eq:distribution} implies $(f|_E)^*(|E|\, \la) \ge s_\la$,
    and \eqref{eq:Remez4keyE} is proved.
\end{proof}

\subsection{Comparison of $L^p$-norms}

Let us write
\begin{align*}
  \|f\|_{L^p(E)}^\sharp &:= \Big( \frac{1}{|E|} \int_E |f(x)|^p \, dx \Big)^{1/p}, \quad 0 < p <\infty,
  \\
  \|f\|_{L^\infty(E)}^\sharp &:= \esssup_{E}|f|,
\end{align*}
for the normalized $L^p$-norms (respectively, quasinorms if $0<p<1$) of $f$ on a measurable set $E$ with $0<|E|<\infty$.
Then, as a consequence of H\"older's inequality,
\[
  \|f\|_{L^q(E)}^\sharp \le \|f\|_{L^p(E)}^\sharp, \quad \text{ if } 0< q \le p \le \infty.
\]
Indeed, since $p/q\ge 1$, 
\begin{align*}
    \int_E |f|^q \, dx \le |E|^{1-q/p} \Big( \int_E |f|^{p}\, dx\Big)^{q/p}. 
\end{align*}

For functions with controlled derivatives also suitable opposite inequalities hold.

\begin{corollary}
   \label{cor:Remez4}
  Let $(\mu^{|\infty},K)$ be admissible data and $\nn$ the associated integer.
  Let $f : K \to \R$ be any $(\mu^{|2\nn},\|f\|_K)$-smooth function.
Let $E \subseteq K$ be a Lebesgue measurable subset with $|E|>0$.
  Then, for all $0< q< p \le \infty$,
  \begin{equation} \label{eq:LpLqmaster}
        \|f\|_{L^p(K)}^\sharp \le  \Big(\frac{\CN d\, |K|}{|E|}\Big)^{2 \nn(1-\frac{q}{p})}
        (2q\, \nn+1)^{\frac{1}{q}-\frac{1}{p}}  
        (\|f\|_{L^q(K)}^\sharp)^{\frac{q}p} (\|f\|_{L^q(E)}^\sharp)^{1-\frac{q}{p}}.
    \end{equation}
  In particular, 
  \begin{equation} \label{eq:LpLqK}
         \|f\|_{L^p(K)}^\sharp \le  (\CN d)^{2 \nn(1-\frac{q}{p})}
        (2q\, \nn+1)^{\frac{1}{q}-\frac{1}{p}}  
        \|f\|_{L^q(K)}^\sharp
  \end{equation}
  and
 \begin{equation} \label{eq:LpLqE}
       \|f\|_{L^p(K)}^\sharp \le  \Big(\frac{ \CN d\, |K|}{|E|}\Big)^{2\nn} \,
       (2q \nn+1)^{\frac{1}{q}}\,  \|f\|_{L^q(E)}^\sharp.
    \end{equation}
\end{corollary}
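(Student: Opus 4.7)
The plan is to derive all three inequalities from a single bound of $\|f\|_K$ by $\|f\|_{L^q(E)}^\sharp$, obtained by integrating the decreasing-rearrangement estimate of \Cref{cor:distribution}. Throughout we may assume $N-1\ne 0$ (otherwise $\CN^{2(N-1)}=\infty$ and all three bounds are vacuous) and $\|f\|_K>0$ (otherwise $f\equiv 0$).

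First, express the $L^q$-norm on $E$ via the decreasing rearrangement and apply \Cref{cor:distribution}:
\[
\|f\|_{L^q(E)}^q = |E|\int_0^1 \bigl((f|_E)^*(|E|\la)\bigr)^q d\la \ge |E|\,\|f\|_K^q\Bigl(\frac{|E|}{|K|\,\CN d}\Bigr)^{2q(N-1)}\!\!\int_0^1 (1-\la)^{2q(N-1)} d\la.
\]
The last integral equals $\tfrac{1}{2q(N-1)+1}$, so dividing by $|E|$ and taking $q$-th roots yields the key estimate
\[
\|f\|_K \le (2q(N-1)+1)^{1/q}\Bigl(\frac{\CN d\,|K|}{|E|}\Bigr)^{2(N-1)}\|f\|_{L^q(E)}^\sharp. \qquad (\star)
\]
Since $\|f\|_{L^p(K)}^\sharp \le \|f\|_K$ trivially for every $0<p\le \infty$, inequality \eqref{eq:LpLqE} follows from $(\star)$ immediately.

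For \eqref{eq:LpLqmaster}, combine $(\star)$ with the interpolation arising from the pointwise bound $|f|^{p-q}\le \|f\|_K^{p-q}$ on $K$, which after integration gives
\[
\|f\|_{L^p(K)}^\sharp \le \|f\|_K^{1-q/p}\bigl(\|f\|_{L^q(K)}^\sharp\bigr)^{q/p}.
\]
Substituting $(\star)$ into the factor $\|f\|_K^{1-q/p}$ and collecting exponents produces exactly \eqref{eq:LpLqmaster}. Inequality \eqref{eq:LpLqK} is then the specialization $E=K$: the ratio $|K|/|E|$ becomes $1$ and the product $(\|f\|_{L^q(K)}^\sharp)^{q/p}(\|f\|_{L^q(E)}^\sharp)^{1-q/p}$ collapses to $\|f\|_{L^q(K)}^\sharp$.

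The argument is essentially bookkeeping once \Cref{cor:distribution} is in hand; no serious obstacle is expected. The only mild subtlety is the case $p=\infty$, where $q/p$ is read as $0$ and $(\|f\|_{L^q(K)}^\sharp)^{q/p}$ as $1$, so that \eqref{eq:LpLqmaster} at $p=\infty$ recovers $(\star)$ itself and remains consistent.
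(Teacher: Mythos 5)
Your proposal is correct and follows essentially the same route as the paper: the key bound $(\star)$ is exactly the paper's inequality (5.14) obtained by integrating the rearrangement estimate from \Cref{cor:distribution}, and the interpolation via $|f|^{p-q}\le\|f\|_K^{p-q}$ is the same device. The only cosmetic difference is that you derive \eqref{eq:LpLqE} directly from $(\star)$ together with $\|f\|_{L^p(K)}^\sharp\le\|f\|_K$, whereas the paper obtains it from \eqref{eq:LpLqmaster} by absorbing $\|f\|_{L^q(K)}^\sharp\le\|f\|_{L^p(K)}^\sharp$; both give the identical constant.
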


\begin{proof}
    We may assume that $\nn\ne 0$; otherwise the statements are trivially true.
    Note that both \eqref{eq:LpLqK} and \eqref{eq:LpLqE} follow from \eqref{eq:LpLqmaster}:
    it suffices to specialize to $E=K$ 
    and to use $\|f\|_{L^q(K)}^\sharp \le \|f\|_{L^p(K)}^\sharp$, respectively.

    Let us prove \eqref{eq:LpLqmaster}.
    We begin with the case $p=\infty$. 
    Using \eqref{eq:Remez4keyE}, we get
    \begin{align*}
        \frac{1}{|E|} \int_{E} |f(x)|^q \, dx &= \frac{1}{|E|} \int_0^{|E|} \big((f|_E)^*(y)\big)^q \, dy
       =  \int_0^1 \big((f|_E)^*(|E|\, \la)\big)^q \, d\la
       \\
                               &\ge    \Big(\frac{|E|}{\CN d\, |K|}\Big)^{2q \nn}\int_0^{1} (1-\la)^{2q \nn} \, d\la \,  \|f\|_K^q
       \\
                               &= \Big(\frac{|E|}{\CN d\, |K|}\Big)^{2q \nn} \frac{1}{2q \nn+1}\, \|f\|_{K}^q,
    \end{align*}
    whence 
    \begin{equation} \label{eq:LinftyLqmaster}
       \|f\|_K \le  \Big(\frac{\CN d\, |K|}{|E|}\Big)^{2 \nn}
        (2q\, \nn+1)^{\frac{1}{q}}  
        \|f\|_{L^q(E)}^\sharp
    \end{equation}
    which is \eqref{eq:LpLqmaster} for $p=\infty$.

    If $0< q < p < \infty$, then \eqref{eq:LinftyLqmaster} implies
     \begin{align*}
     \MoveEqLeft 
     \frac{1}{|K|} \int_{K} |f(x)|^p \,dx
        \le  \|f\|_{K}^{p-q} \frac{1}{|K|}\int_{K} |f(x)|^q \,dx = \|f\|_{K}^{p-q}\, (\|f\|_{L^q(K)}^\sharp)^q
        \\
        &\le \Big(\Big(\frac{\CN d\, |K|}{|E|}\Big)^{2 \nn}
        (2q\, \nn+1)^{\frac{1}{q}}\Big)^{p-q} \, (\|f\|_{L^q(K)}^\sharp)^q (\|f\|_{L^q(E)}^\sharp)^{p-q}
     \end{align*}
     from which \eqref{eq:LpLqmaster} follows easily.
     The proof is complete.
\end{proof}

\begin{remark}
  Bourgain proved
  in \cite{Bourgain:1991te} the following inequality for polynomials:
  {\it
  Let $K \subseteq \R^d$ be a convex body of volume $1$ and $p \in \R[x_1,\ldots,x_d]$ a polynomial of degree $n$.
  Then, for each $q>0$,
  \begin{equation*}
     \|p\|_{L^q(K)} \le C(n,q)\, \|p\|_{L^1(K)}.
  \end{equation*}
  More precisely,
  \begin{equation*}
     \|p\|_{L^\psi(K)} \le C_1\, \|p\|_{L^1(K)},
  \end{equation*}
  where $L^\psi$ it the Orlicz space with Orlicz function $\psi(t) = \exp(t^{C_2/n})-1$,
  where the constants $C_1$ and $C_2$ are absolute.
  }

  These inequalities have been generalized by Brudnyi \cite{Brudnyi_1999} to analytic functions,
  where the role of $n$ is played by the \emph{analytic degree}; see \Cref{sec:analyticdegree}.

  In contrast to \eqref{eq:LpLqmaster}, \eqref{eq:LpLqK}, and \eqref{eq:LpLqE},
  here the constants do not depend on the dimension $d$.
  Following Bourgain's approach,
  it should be possible to obtain versions of \eqref{eq:LpLqmaster}, \eqref{eq:LpLqK}, 
  and \eqref{eq:LpLqE} that are independent of the dimension:
  use \Cref{cor:Remez2} for $d=1$ and $K=[0,1]$ to get a replacement for \cite[Lemma 3.1]{Bourgain:1991te}.
\end{remark}

\subsection{A bound for the mean oscillation of $\log |f|$}

Let $K \subseteq \R^d$ be a convex body.
Recall that the \emph{mean oscillation} of a locally integrable function $g : K \to \R$ over a ball $B \subseteq K$ is
\[
  \on{mo}_B(g) := \frac{1}{|B|} \int_B |g(x)-g_B|\, dx,
\]
where
\[
  g_B := \frac{1}{|B|} \int_B g(x)\, dx.
\]
We have the following bound for the mean oscillation of $\log |f|$ if
$f$ has suitably controlled derivatives.

\begin{corollary} \label{cor:Remez6}
  Let $(\mu^{|\infty},K)$ be admissible data and $\nn$ the associated integer.
  Let $f : K \to \R$ be any $(\mu^{|2\nn},\|f\|_K)$-smooth function.
  Then, for each ball $B \subseteq K$,
  \begin{equation} \label{eq:mo}
     \on{mo}_B(\log |f|) \le 4 \nn  \Big(\log \Big(\frac{\CN d\, |K|}{|B|}\Big) +1 \Big)
  \end{equation}
  where the right-hand side is interpreted as $\infty$ if $\nn = 0$.
\end{corollary}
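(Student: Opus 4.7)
The plan is to combine a standard inequality controlling the mean oscillation by an $L^1$-deviation from any constant with the sublevel set estimate of \Cref{cor:Remez3} via the layer-cake formula. Throughout I may assume $N-1 \ne 0$, since otherwise the right-hand side of \eqref{eq:mo} is $\infty$ by convention and the bound is trivial.

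First I would invoke the elementary inequality $\on{mo}_B(g) \le \frac{2}{|B|} \int_B |g(x)-c|\, dx$, valid for every constant $c \in \R$, applied to $g = \log |f|$ and to the particular choice $c = \log \|f\|_B$. Since $|f(x)| \le \|f\|_B$ on $B$, this reduces matters to estimating
\[
   I := \frac{1}{|B|} \int_B \log \frac{\|f\|_B}{|f(x)|}\, dx,
\]
whose integrand is nonnegative and a.e.\ finite (the zero set has measure zero by \Cref{cor:zeromeasure}, and integrability will follow from the distribution-tail bound below).

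Next I would rewrite $I$ by the layer-cake formula as
\[
   I = \frac{1}{|B|} \int_0^\infty \big|\{x \in B : |f(x)| < \|f\|_B e^{-s}\}\big|\, ds,
\]
and bound the integrand in two ways: trivially by $|B|$, and, applying \Cref{cor:Remez3} together with the crude estimate $\|f\|_B \le \|f\|_K$, by $\CN d\, |K|\, e^{-s/(2(N-1))}$. Splitting the integral at the crossover value $s_0 := 2(N-1)\log(\CN d\, |K|/|B|)$ (which is nonnegative since $|B| \le |K|$ and $\CN d \ge 1$), a short computation gives
\[
   I \le 2(N-1)\Big(\log\frac{\CN d\, |K|}{|B|} + 1\Big),
\]
and multiplying by the leading factor $2$ from the first step yields \eqref{eq:mo}.

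The one point to watch is that \Cref{cor:Remez3} is phrased in terms of $\|f\|_K$ rather than $\|f\|_B$; the trivial inequality $\|f\|_B \le \|f\|_K$ absorbs the unknown ratio $\|f\|_B/\|f\|_K \le 1$ into the estimate and, together with $|B| \le |K|$, makes the split at $s_0$ well-posed. Everything else is routine calculus, so I expect no further obstacles.
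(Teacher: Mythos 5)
Your proof is correct, and it takes a mildly different route from the paper's. The paper picks $c=\log\|f\|_K$ and converts the integral over $B$ into an integral over $\lambda \in (0,1)$ via the decreasing rearrangement, then invokes the rearrangement bound \Cref{cor:distribution} directly. You pick $c = \log\|f\|_B$, pass to the layer-cake representation, and feed the sublevel-set estimate \Cref{cor:Remez3} into it, splitting at the crossover point $s_0 = 2(N-1)\log(\CN d\,|K|/|B|)$. These are of course dual expressions of the same estimate — \Cref{cor:distribution} is itself derived from \Cref{cor:Remez3} — so the two computations are essentially a change of variables apart. Your version has the minor advantage that it bypasses \Cref{cor:distribution} entirely and that the choice $c=\log\|f\|_B$ makes nonnegativity of the integrand immediate; the paper's version is slightly shorter once \Cref{cor:distribution} is in hand. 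One small point worth making explicit if you write this up: the crossover $s_0$ is nonnegative because $\CN = 4e^{4+\gamma_{\tilde\mu}(\cdot)} \ge 4e^4 > 1$ (as $\gamma_{\tilde\mu}\ge 0$ for increasing $\tilde\mu$), $d\ge 1$, and $|B|\le|K|$; you asserted this but did not justify $\CN\ge 1$.
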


\begin{proof}
Observe that
\[
  \on{mo}_B(\log |f|) \le  \frac{2}{|B|} \int_B \Big|\log \frac{|f(x)|}{\|f\|_K}\Big| \, dx.
\]
Indeed,
\begin{align*}
   \Big| \log |f| - \frac{1}{|B|} \int_B \log |f|\, dx \Big| &\le \Big| \log |f| - \log \|f\|_K \Big| + \Big| \log \|f\|_K  - \frac{1}{|B|} \int_B \log |f|\, dx \Big|
   \\
   &= \Big| \log \frac{|f|}{\|f\|_K} \Big| + \Big| \frac{1}{|B|} \int_B \log \|f\|_K  -  \log |f|\, dx \Big|
   \\
   &\le \Big| \log \frac{|f|}{\|f\|_K} \Big| + \frac{1}{|B|} \int_B \Big| \log \frac{|f|}{\|f\|_K} \Big| \, dx.
\end{align*}
  We may assume that $\nn\ne 0$.
Now, by \eqref{eq:Remez4keyE},
\begin{align*}
   \frac{1}{|B|}\int_B \Big|\log \frac{|f(x)|}{\|f\|_K}\Big| \, dx
   &= \frac{1}{|B|} \int_0^{|B|} \Big|\log \frac{(f|_B)^*(y)}{\|f\|_K}\Big| \, dy
    \\
    &=  \int_0^{1} \Big|\log \frac{(f|_B)^*(|B|\, \la)}{\|f\|_K}\Big| \, d\la
    \\
    &\le  2\nn  \int_0^{1} -\log \Big(\frac{|B|}{|K|}\cdot  \frac{1-\la}{\CN d} \Big)    \, d\la
    \\
    &= 2\nn  \Big(\log \Big(\frac{\CN d\, |K|}{|B|}\Big) +1 \Big),
\end{align*}
and \eqref{eq:mo} follows.
\end{proof}

\Cref{cor:Remez6} has similarity with the log-BMO property of analytic functions 
\cite[Corollary 1.10]{Brudnyi_1999}.
But
from \eqref{eq:mo} it seems not possible to deduce that $\log |f|$ has bounded mean oscillation,
since the right-hand side tends to infinity if the radius of $B$ tends to zero.
Also using \eqref{eq:mo} in the case $K = B$ does not help, because then $\nn$ becomes eventually zero
if $B$ gets small enough.

This is related to the fact that we do not have good enough control 
away from points in $K$, where $\|f\|_K$ is attained.
Indeed, if $x \in K$ is such that $|f(x)| = \|f\|_K$, then
\[
    \sup_{\substack{\text{balls }B \subseteq K \\ x \in B}}  \on{mo}_B(\log |f|)  \le 4 \nn\, \big(\log (\CN d)+1\big)
\]
as follows from the next corollary.

\begin{corollary} \label{cor:BMO}
    Let $(\mu^{|\infty},K)$ be admissible data and $\nn$ the associated integer.
  Let $f : K \to \R$ be any $(\mu^{|2\nn},\|f\|_K)$-smooth function.
  Then, for each convex body $L \subseteq K$ such that $\|f\|_L = \|f\|_K$,
  \begin{equation} \label{eq:moL}
     \on{mo}_L(\log |f|) \le 4 \nn\,  \big(\log(\CN d) +1\big)
  \end{equation}
  where the right-hand side is interpreted as $\infty$ if $\nn = 0$.
\end{corollary}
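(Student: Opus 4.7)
The plan is to repeat the argument of \Cref{cor:Remez6} but with the ambient body $K$ replaced by the smaller convex body $L$. The crucial point, enabled by the hypothesis $\|f\|_L=\|f\|_K$, is that $f|_L$ retains the full smoothness of $f$ \emph{as a function on $L$ itself}, so that no factor $|K|/|L|$ ever appears.

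First I would verify the two prerequisites needed to redo the proof of \Cref{cor:Remez6} on $L$ in place of $K$. Since $\de_L\le \de_K$ and $\Si_{\mu^{|\infty}}(1,\infty)>\de_K e\ge \de_L e$, the pair $(\mu^{|\infty},L)$ is admissible data; let $N':=\fd_{\de_L\mu^{|\infty}}(1)+1$ denote its associated integer, so that $N'\le N$ by monotonicity of $\fd$ in its first argument. Second, because $\|f\|_L=\|f\|_K$, for every $0\le j\le 2(N'-1)$ we have
\[
    \|f\|_{j,L}\le \|f\|_{j,K}\le \|f\|_K\,\mu_1\mu_2\cdots\mu_j=\|f\|_L\,\mu_1\mu_2\cdots\mu_j,
\]
so $f|_L$ is $(\mu^{|2(N'-1)},\|f\|_L)$-smooth on $L$.

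With these prerequisites in place, I would apply \Cref{cor:distribution} to $f|_L$ on $L$ with $E=L$, obtaining
\[
   (f|_L)^*(|L|\la)\ge \|f\|_L\Big(\frac{1-\la}{C_{N'}d}\Big)^{2(N'-1)},\quad \la\in(0,1),
\]
and then repeat the computation from the proof of \Cref{cor:Remez6} verbatim, with both $K$ and $B$ replaced by $L$ (so that $\log(|K|/|B|)$ collapses to $0$); this yields $\on{mo}_L(\log|f|)\le 4(N'-1)(\log(C_{N'}d)+1)$. Since $\ga_{\tilde \mu}$ is non-decreasing, $C_{N'}=\Ga_{\tilde \mu}(2(N'-1))\le \Ga_{\tilde \mu}(2(N-1))=\CN$, and therefore $4(N'-1)(\log(C_{N'}d)+1)\le 4(N-1)(\log(\CN d)+1)$. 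The convention that the right-hand side is $\infty$ when $N-1=0$ trivializes that case.

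The main obstacle I foresee is the borderline situation in which $N'-1=0$ but $N-1>0$ (which occurs precisely when $\de_L\mu_1\le 1/e<\de_K\mu_1$); there \Cref{cor:distribution} applied on $L$ degenerates to the trivial estimate $(f|_L)^*\ge 0$. In that case I would argue directly: the bound $\|f\|_{1,L}\le \|f\|_L\mu_1\le \|f\|_L/(e\de_L)$ combined with the fact that $\|f\|_L=\|f\|_K$ is attained on $L$ forces $|f|\ge (1-1/e)\|f\|_L$ throughout $L$, so that $\on{mo}_L(\log|f|)\le 2\log(e/(e-1))$, which is easily absorbed in the claimed upper bound whenever the latter is finite (note that once $N\ge 2$, the bound $4(N-1)(\log(\CN d)+1)$ already exceeds $4(\log(4e^4)+1)$).
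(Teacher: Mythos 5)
Your proof is correct, but it takes a genuinely different route from the paper's. The paper keeps the original admissible data $(M^{|\infty},K,2b)$ with $b=M_0=\|f\|_K$ and exploits the flexibility built into \Cref{thm:Remez}, which allows $L\subsetneq K$ as the level sub-body: because $\|f\|_L=\|f\|_K=b$, the hypothesis $\|f\|_L\ge b$ is met, and \Cref{thm:Remez} yields a Remez inequality on $L$ with the \emph{original} $N$ and $\CN$ but the measure ratio $|L|/|E|$. Feeding this into the sublevel set, rearrangement, and mean oscillation computations of \Cref{cor:Remez3}, \Cref{cor:distribution}, \Cref{cor:Remez6} with $K$ and $B$ both replaced by $L$ then gives \eqref{eq:moL} directly, and the degenerate case is covered by the standing convention that $\CN^{2(N-1)}=\infty$ when $N=1$. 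You instead re-run the whole machinery with $L$ as a fresh ambient body carrying its own admissible data $(\mu^{|\infty},L)$ and associated integer $N'\le N$, then pass from $(N',C_{N'})$ to $(N,\CN)$ by monotonicity of $n\mapsto\Ga_{\tilde\mu}(n)$. This works and is slightly sharper before the relaxation, but it forces you to treat separately the borderline case $N'=1<N$, where your application of \Cref{cor:distribution} on $L$ degenerates. Your direct handling of that case is correct: with $j_0(1)=0$ the condition $N'-1=0$ is indeed $\de_L\mu_1\le 1/e$, the Lipschitz bound $\|f\|_{1,L}\le\|f\|_L\mu_1\le\|f\|_L/(e\de_L)$ together with the attainment of $\|f\|_L$ on $L$ gives $|f|\ge(1-1/e)\|f\|_L$ on all of $L$, hence $\on{mo}_L(\log|f|)\le\log\tfrac{e}{e-1}<1$, which is far below $4(N-1)(\log(\CN d)+1)\ge 4(\log(4e^4)+1)$ once $N\ge 2$. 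The net effect is the same inequality; the paper's route is simply cleaner because it sidesteps the degenerate case by never changing the admissible data.
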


\begin{proof}
   If $L \subseteq K$ is a convex body such that $\|f\|_L = \|f\|_K$ 
   and $E$ is a measurable subset of $L$ with $|E|>0$, 
   then we get 
   \[
   \|f\|_{L} \le \Big(
     \frac{\CN\, |L|^{1/d}}{|L|^{1/d} - (|L|-|E|)^{1/d}}
     \Big)^{2 \nn} \|f\|_E.
   \]
   from \Cref{thm:Remez} (in analogy to \Cref{cor:Remez2}). 
For $E := |\{x \in L : |f(x)| \le t\}|$ we may conclude
\[
    |E| \le 
    \CN d\, |L|\,
     \Big(\frac{t}{\|f\|_{L}}\Big)^{\frac{1}{2 \nn}}, \quad t>0,
\]
(in analogy to \Cref{cor:Remez3})
and thus 
\[
 (f|_L)^*(|L|\, \la) \ge \|f\|_L \Big(\frac{1-\la}{\CN d}\Big)^{2\nn}, \quad \la \in (0,1),    
\]
(in analogy to \Cref{cor:distribution}).  Using this estimate in the computations of the proof of \Cref{cor:Remez6},
yields \eqref{eq:moL}.
\end{proof}

\section{Complementary results}

In this section, we compare the $\mu^{|\infty}$-degree to the polynomial and the analytic degree, respectively.
Moreover, we discuss the existence of non-quasianalytic bump functions and
show how the technique from \Cref{sec:univariate} can be used to extract information on critical points.

\subsection{The $\mu^{|\infty}$-degree vs.\ the polynomial degree} \label{sec:polynomialdegree}

 What is the relation between the $\mu^{|\infty}$-degree  and the usual degree of a polynomial?

 Since most of our results are based on restriction to one-dimensional sections,
 let us assume that $d=1$.
  Let $p \in \R[t]$ be a univariate polynomial of degree $n$.
  We want to find upper and lower bounds in $n$ for the $2 \mu^{|\infty}$-degree $\fd_{2 \mu^{|\infty}}(1)$
  for any suitable full admissible weight $(\mu^{|\infty},M_0)$ such that $p : [-1,1]\to \R$ 
  is $(\mu^{|\infty},M_0)$-smooth.
  (It is natural to take $N=\infty$, since all derivatives of high order of $p$ are identically zero anyway.
  The factor $2$ appears, since $\de_{[-1,1]}=2$.)

  By Markov's inequality (\cite{Markov:1889aa}, \cite{MarkovGrossmann1916}),
  \begin{equation} \label{eq:Markov}
    \|p^{(k)}\|_{[-1,1]} \le \frac{n^2 (n^2 -1^2)(n^2 -2^2)  \cdots (n^2-(k-1)^2)}{1 \cdot 3 \cdot 5  \cdots  (2k-1)} \|p\|_{[-1,1]},
    \quad 1 \le k \le n.
  \end{equation}
  Thus a most natural full admissible weight $(\mu^{|\infty},M_0)$ such that $p : [-1,1] \to \R$ is $(\mu^{|\infty},M_0)$-smooth is
  \begin{equation} \label{eq:vspoly}
     M_0 := \|p\|_{[-1,1]}, \quad \mu_j := n^2, \quad j \ge 1.
  \end{equation}
  In fact, $(\mu_j)_j$ has to be increasing so that any admissible choice must satisfy $\mu_j \ge n^2$ for all $j \ge 1$. 
  Making $(\mu_j)_j$ bigger increases also $\fd_{2\mu^{|\infty}}(1)$ and in that way we
  could make $\fd_{2\mu^{|\infty}}(1)$ as large as we please.
  It is also natural to take the second parameter equal to $1$, since $\frac{b}{M_0}=1$ for the choice $b=\|p\|_{[-1,1]}$
  and $b \mapsto \fd_{2 \mu^{|\infty}}(b)$ is decreasing.

  For the choice \eqref{eq:vspoly} we have
  \[
    \Si_{\mu^{|\infty}}(1,m) = \sum_{j=1}^m \frac{1}{\mu_j} =  \frac{m}{n^2}
  \]
  and, consequently,
  \[
    \fd_{\mu^{|\infty}}(1) = \lfloor e n^2 \rfloor.
  \]
  Note that in this case 
  \begin{equation} \label{eq:2fd}    
      2 \fd_{\mu^{|\infty}}(1) \le \fd_{2\mu^{|\infty}}(1)   \le 2 \fd_{\mu^{|\infty}}(1)+1;
  \end{equation}    
  the first inequality is always true, because $(\mu_j)_j$ is increasing.

  We get a better result for complex polynomials. Let $p \in \C[z]$ be a univariate polynomial of degree $n$.
  By Bernstein's inequality (\cite{Bernstein:1926aa}, \cite{Riesz:1914aa}),
  \begin{equation*}
      \|p^{(k)}\|_{\ol D_1} \le \frac{n!}{(n-k)!} \|p\|_{\ol D_1}, \quad k \ge 1,
  \end{equation*}
  where $\ol D_1 := \{z \in \C : |z|\le 1\}$ is the closed unit disk.
  Thus $p|_{\ol D_1}$ is $(\mu^{|\infty},M_0)$-smooth for 
  \[
      M_0 := \|p\|_{\ol D_1}, \quad \mu_j = C n, \quad j\ge 1,
  \]
  where the constant $C>0$ accounts for the conversion of the bounds for complex derivatives to real partial derivatives.
  In this case, we find 
\[
    \fd_{\mu^{|\infty}}(1) = \lfloor C e n\rfloor,
\]
and \eqref{eq:2fd} is still valid.

\begin{remark}
    Strictly speaking a $(\mu^{|\infty}, M_0)$-smooth function is real valued by definition (see \Cref{ssec:Msmooth}).
    Its definition clearly makes sense for complex valued functions as well, 
    but crucial results of the paper are based on Rolle's theorem.
\end{remark}

\subsection{The $\mu^{|\infty}$-degree vs.\ the analytic degree} \label{sec:analyticdegree}

The \emph{analytic degree} $d_f(2\ep)$ of a holomorphic function $f$ on the disk $D_{1+2\ep}:= \{z \in \C : |z|< 1+2\ep\}$, for some $\ep> 0$,
is defined in \cite{Brudnyi_1999} as the best constant $d$ in the inequality
\[
  \|f\|_I \le \Big(\frac{4\, |I|}{|E|}\Big)^d \|f\|_E,
\]
where $I$ is any interval in the intersection of a real affine line in $\C$ with the unit disk $D_1$ and $E \subseteq I$ is any
measurable subset with $|E|>0$.

By the Cauchy estimates,
\[
  \|f^{(k)}\|_{\ol D_{1}} \le \frac{k!\, \|f\|_{D_{1+\ep}}}{\ep^{k+1}}, \quad k \ge 1.
\]
So $f$ on $\ol D_1$ is $(\mu^{|\infty},M_0)$-smooth for
\[
  M_0 := \frac{\|f\|_{D_{1+\ep}}}{\ep}, \quad \mu_j :=  \frac{C j}{\ep}, \quad j \ge 1,
\]
where the constant $C>0$ accounts for the conversion of the bounds for complex derivatives to real partial derivatives.
For $\tilde \mu(s)=s$ we have $\ol \ga_{\tilde \mu}=1$ and $\ol \Ga_{\tilde \mu}=4 e^5$; see \Cref{eq:olGa}.
By \Cref{thm:NSV}, we find (as in the proof of \Cref{thm:Remez})
\[
  \|f\|_I \le \Big(\frac{4e^5 |I|}{|E|}\Big)^{2\fd_{2\mu^{|\infty}}(\ep)} \|f\|_E.
\]
Since $(4 e^5)^{2\fd_{2\mu^{|\infty}}(\ep)} \le 4^{10\, \fd_{2\mu^{|\infty}}(\ep)}$, we conclude that
\begin{equation*}
   d_f(2\ep) \le 10\, \fd_{2\mu^{|\infty}}(\ep).
\end{equation*}
We do not know if there is a lower bound for $d_f(2\ep)$ in terms of $\fd_{2\mu^{|\infty}}(\ep)$.

\begin{remark}
  For $\mu^{|\infty} =(j)_{j\ge 1}$,
    $\Si_{\mu^{|\infty}}(1,n)$ is the partial harmonic series $\sum_{j=1}^n \frac{1}{j} =: H_n$,
    and, more generally, $\Si_{\mu^{|\infty}}(m,n) = H_n - H_{m-1}$ if $1 < m \le n$.
    Let $x \ge 1$ and define the positive integer $n(x)$ by
    \begin{equation} \label{eq:Comtet}
      H_{n(x)} \le x < H_{n(x)+1}.
    \end{equation}
    Comtet \cite{Comtet67} (see also Boas and Wrench \cite{BoasWrench71}) showed that, for $x\ge 2$,
    \begin{equation*}
      \Big\lfloor e^{x-\ga} - \frac{1}{2} - \frac{3}{2} \frac{1}{e^{x-1}-1} \Big\rfloor
       \le n(x) \le \Big\lfloor e^{x-\ga} - \frac{1}{2} + \frac{1}{12} \frac{1}{e^{x-1}-1} \Big\rfloor,
    \end{equation*}
    where $\ga$ is the Euler--Mascheroni constant,
    which determines one or two possible values of $n(x)$ for any $x\ge 2$.
    With this formula it is not difficult to find explicit estimates of $\fd_{a \mu^{|\infty}}(b)$, for $a,b>0$.
    Note that the possible equality $H_{n(x)} = x$ in \eqref{eq:Comtet}, in contrast to the strict inequality in the definition of $\fd_{a \mu^{|\infty}}(1)$,
    might effect a deviation of at most $1$ between $n(ae)$ and $\fd_{a \mu^{|\infty}}(1)$.
\end{remark}

\subsection{Conditions for non-quasianalytic bump functions}

Let $M^{|\infty}$ be a non-quasianalytic full admissible weight.
We may infer from \Cref{prop:numberofzeros}
a quantitative necessary condition for
the existence of $M^{|\infty}$-smooth functions $f$ with compact support contained in the interior $K^\o$ of the convex body $K$.
Of course, this is most informative if $K$ has equal width in all directions, e.g., if $K$ is a ball.

\begin{corollary} \label{cor:cpsupp}
  Let $M^{|\infty}$ be a non-quasianalytic full admissible weight, $K \subseteq \R^d$ a convex body, and $b>0$.
  Suppose that $f : K \to \R$ is an $M^{|\infty}$-smooth function compactly supported in $K^\o$ and $\|f\|_K\ge b$.
  Then
  \begin{equation*}
     \Si_{\mu^{|\infty}} (j_0+1,\infty) \le \de_K e, \qquad (j_0 = \lceil \log (\tfrac{b}{2M_0})^{-1} \rceil_{\N}).
  \end{equation*}
\end{corollary}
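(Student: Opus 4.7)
The plan is to argue by contraposition. Suppose, for contradiction, that
\[
   \Si_{\mu^{|\infty}}(j_0+1,\infty) > \de_K e, \qquad (j_0 = \lceil \log (\tfrac{b}{2M_0})^{-1} \rceil_{\N}).
\]
Then the triple $(M^{|\infty},K,b)$ satisfies condition \eqref{eq:multdimcond} and is therefore admissible data in the sense of \Cref{def:admissibledata}, with a well-defined finite associated integer $N = \fd_{\de_K \mu^{|\infty}}(\tfrac{b}{2M_0})+1$. Since $f$ is $M^{|\infty}$-smooth, it is in particular $M^{|N}$-smooth, and by hypothesis $\|f\|_K \ge b$, so the assumptions of \Cref{prop:numberofzeros} are fulfilled.

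Applying \Cref{prop:numberofzeros}, we obtain a ball $B \subseteq K^\o$ such that for every affine line $\ell \subseteq \R^d$ meeting $B$, the restriction $f|_\ell$ has at most $2(N-1)$ zeros (counted with multiplicities). The plan is now to produce a line on which $f|_\ell$ has infinitely many zeros, yielding the desired contradiction.

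This is where the non-quasianalytic bump hypothesis enters. Since $\supp f$ is a compact subset of $K^\o$, the complement $K \setminus \supp f$ contains an open neighborhood of $\p K$ in $K$; in particular, $f$ vanishes identically on a non-empty open subset $V$ of $K$. Pick any point $p \in V$ and any affine line $\ell$ through $p$ that meets the ball $B$ supplied by \Cref{prop:numberofzeros}; such a line exists because $B$ has non-empty interior in $\R^d$. Then $\ell \cap V$ is a non-empty open subset of $\ell$ on which $f|_\ell \equiv 0$, so $f|_\ell$ has infinitely many zeros. This contradicts the bound $2(N-1) < \infty$ and completes the proof.

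The argument is essentially a one-line application of \Cref{prop:numberofzeros} once the admissibility hypothesis \eqref{eq:multdimcond} is assumed; the only subtlety is recognizing that compact support in $K^\o$ forces $f$ to vanish on an open set intersecting every affine line through a small neighborhood of $\p K$, and that in particular one can choose such a line also passing through the ball $B \subseteq K^\o$. No delicate estimates are needed, so there is no real technical obstacle beyond the correct setup.
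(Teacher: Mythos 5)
Your proposal is correct and matches the paper's approach: the paper's proof is the single sentence ``\Cref{prop:numberofzeros} shows that \eqref{eq:multdimcond} must be violated,'' and your argument is exactly the correct unpacking of this, supplying the detail that compact support in $K^\o$ forces $f$ to vanish on a non-empty open set, and that a line joining a point of that open set to a point of the ball $B$ would carry infinitely many zeros of $f$, contradicting the bound $2(N-1)$ from \Cref{prop:numberofzeros}. (One small terminological quibble: what actually enters at the step you flag is the compact-support hypothesis rather than non-quasianalyticity per se; non-quasianalyticity is only needed for such an $f$ to exist at all and is in fact implied by the conclusion.)
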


\begin{proof}
   \Cref{prop:numberofzeros} shows that \eqref{eq:multdimcond} must be violated.
\end{proof}

\begin{remark} \label{rem:complement}
  For completeness, we sketch a proof of the following fact:
  \emph{Let $Z$ be any non-empty closed subset of $\R^d$ and $M^{|\infty}=(M_j)_{j\ge 0}$ a non-quasianalytic full admissible weight.
  There exist $f \in \cC^\infty(\R^d)$ and $A>0$
  such that $\|f^{(\al)}\|_{\R^d}  \le A^{|\al|+1} M_{|\al|}$, for all  $\al \in \N^d$, and
  $Z_f=Z \subseteq Z(f^{(\al)})$ for all $\al \ne 0$.}

  By \cite[Lemma 2.3 and Corollary 3.5]{Rainer:2021aa},
  there exists a non-quasianalytic full admissible weight $L^{|\infty}=(L_k)_{k \ge 0}$  such that
  \begin{equation} \label{eq:LM}
    \Big(\frac{M_j}{L_j}\Big)^{1/j} \to \infty.
  \end{equation}
  By \cite[Theorem 1.4.2]{Hoermander83I} or \cite[Proposition 3.11]{Rainer:2021aa},
  there is a $\cC^\infty$-function $0\le \vh \le 1$ with $\vh(0)=1$ and support contained in the unit ball $B_1$ and $A>0$ such that
  $\|\vh^{(\al)}\|_{\R^d}  \le A^{|\al|+1} L_{|\al|}$ for all $\al$.
  For every $x\in \R^d \setminus Z$, let $d(x) := \frac{1}{2} \inf_{z \in Z} |x-z|$ and
  $\vh_x(y):= \vh(\frac{y-x}{d(x)})$. Then
  $\vh_x(x)=1$, $U_x := \{y : \vh_x(y)\ne 0\}\subseteq \R^d \setminus Z$, and
  \[
    \|\vh_x^{(\al)}\|_{\R^d} = \frac{1}{d(x)^{|\al|}} \|\vh^{(\al)}\|_{\R^d} \le \frac{A^{|\al|+1}}{d(x)^{|\al|}} L_{|\al|}.
  \]
  The family $\{U_x\}$ forms an
  open cover of $\R^d \setminus Z$
  which admits a countable subcover $\{U_n:=U_{x_n}\}$.
  Let $\vh_n := \vh_{x_n}$, $d_n := 1/d(x_n)$, and
  choose constants $s_n>0$ such that
  \[
    d_n^j s_n \le \frac{M_j}{L_j} \quad \text{ for all } n,j \ge 1.
  \]
  This is possible by \eqref{eq:LM}, since for each $d_n$ we find $j_n$ such that $d_n \le \big(\frac{M_j}{L_j}\big)^{1/j}$ for all $j > j_n$;
  so we may take
  \[
    s_n := \min \Big\{\min_{j \le j_n} \frac{M_j}{d_n^j L_j},1\Big\}.
  \]
  Then $f := \sum_{n\ge 1} \frac{s_n}{2^n} \vh_n$ converges uniformly in all derivatives,
  \begin{align*}
     \Big|\sum_{n\ge 1} \frac{s_n}{2^n} \vh_n^{(\al)}(x)\Big|
     &\le
     \sum_{n\ge 1} \frac{s_n}{2^n}\cdot A^{|\al|+1} d_n^{|\al|} L_{|\al|}
     \le
     A^{|\al|+1} M_{|\al|},
  \end{align*}
  so that $f$ defines a $\cC^\infty$-function on $\R^d$ with $f^{(\al)} = \sum_{n\ge 1} \frac{s_n}{2^n} \vh_n^{(\al)}$
  and $\|f^{(\al)}\|_{\R^d}  \le A^{|\al|+1} M_{|\al|}$ for all $\al$.
  Since the $U_n$ cover $\R^d \setminus Z$,
  $f$ is strictly positive on $\R^d \setminus Z$ and
  vanishes, together with all partial derivatives $f^{(\al)}$, on $Z$.
\end{remark}

\subsection{Critical points}

Let $M^{|\infty}$ be a full admissible weight and
$f : K \to \R$ an $M^{|\infty}$-smooth function, where $K \subseteq \R^d$ is a convex body.
In order to get quantitative information on the critical points of $f$,
one can consider the function $g:=|\nabla f|^2 = \sum_{j=1}^d (\p_j f)^2$
and apply the results of \Cref{sec:multidim} to $g$.
Note that $g$ is $\hat M^{|\infty}$-smooth for a full admissible weight
$\hat M^{|\infty}$ which can be computed from $M^{|\infty}$
in view of the Fa\`a di Bruno formula.

But the one-dimensional analysis from \Cref{sec:univariate}
allows us to extract information in a more direct way.

\begin{proposition} \label{prop:critical}
  Let $M^{|\infty}= (M_j)_{j\ge 0}$ be a full admissible weight, $K \subseteq \R^d$ a convex body, and $b>0$
  such that
  \begin{equation} \label{eq:multdimcondcrit}
     \Si_{\mu^{|\infty}}(j_0+1,\infty) > 2\de_K e, \qquad (j_0 = \lceil \log (\tfrac{b}{2M_0})^{-1}\rceil_{\N}).
  \end{equation}
  Then $\nn:= \fd_{2\de_K \mu^{|\infty}}(\tfrac{b}{2M_0})$ is a nonnegative integer.
  Let $f : K \to \R$ be any $M^{|\nn+1}$-smooth function such that $\|f\|_{K} \ge b$.
  Let $\ell$ be any affine line that meets $B$ (i.e.\ the ball contained in $\{x \in K : |f(x)|>\frac{b}{2}\}$ from \Cref{prop:numberofzeros})
  such that either $\ell \cap Z_f \ne \emptyset$ or
  $V(f|_\ell)\ge 2b$.
  Then $f$ has at most $2\nn$ critical points on $\ell$.
  Thus at all but possibly $2\nn$ points of $\ell$ the level sets of $f$ are $\cC^{\nn+1}$-submanifolds.
\end{proposition}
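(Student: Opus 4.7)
The plan is to reduce to a univariate question and apply Bang's theory from \Cref{sec:univariate}. Parametrize $\ell \cap K$ by $t \mapsto x_0 + t(x_1 - x_0)$, $t \in [0,1]$, where $x_0, x_1$ are the intersection points of $\ell$ with $\p K$, and set $g(t) := f(x_0 + t(x_1-x_0))$. As in the proof of \Cref{prop:numberofzeros}, $g$ is $(\de_K \mu^{|N},M_0)$-smooth. A critical point of $f$ on $\ell$ forces $g'$ to vanish there, so it suffices to bound the number of zeros of $h := g'$ in $[0,1]$ by $2(N-1)$. Differentiation yields $\|h^{(j)}\|_{[0,1]} \le \de_K^{j+1} M_{j+1}$ for $0 \le j \le N-1$, so $h$ is $\hat M^{|N-1}$-smooth, where $\hat M_0 := \de_K M_1$ and $\hat \mu_j := \de_K \mu_{j+1}$.

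Next, the hypotheses yield a point $\tau$ where $|h(\tau)|$ is comparatively large. Since $\ell$ meets $B \subseteq \{|f| \ge 2b/3\}$ (cf.\ the proof of \Cref{prop:numberofzeros}), there is $t_* \in [0,1]$ with $|g(t_*)| \ge 2b/3$. In Case~1, where $\ell \cap Z_f \ne \emptyset$, the mean value theorem applied between $t_*$ and a zero of $g$ gives $\tau$ with $|h(\tau)| \ge 2b/3$; in Case~2, where $V(f|_\ell) \ge 2b$, the mean value theorem directly produces $\tau$ with $|h(\tau)| \ge 2b$. Setting $x_{-1} := \tau$, in either case
\[
  v := b_{h,\hat M^{|N-1},0}(\tau) \ge \frac{|h(\tau)|}{\hat M_0} \ge \frac{2b}{3\de_K M_1}.
\]

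Now apply the Rolle-based chain of \Cref{cor:Bang1} to $h$ on each side of $\tau$, exactly as in the proof of \Cref{cor:Bang2}: the total number of zeros of $h$ in $[0,1]$ is bounded by $2 \fd_{\hat \mu^{|N-1}}(v)$. The key is to show $\fd_{\hat \mu^{|N-1}}(v) \le N-1$; using $\hat \mu_j = \de_K \mu_{j+1}$, this unwinds to
\[
  \Si_{\mu^{|\infty}}(\hat j_0+2,N) \ge \de_K e, \qquad \hat j_0 := \lceil \log v^{-1}\rceil_\N.
\]
The defining property of $N$ together with hypothesis \eqref{eq:multdimcondcrit} gives the stronger inequality $\Si_{\mu^{|\infty}}(j_0+1,N) \ge 2\de_K e$, and the surplus factor $2$ is precisely what is needed: the shift from $g$ to $g'$ permits $\hat j_0 > j_0$, but only when $\mu_1$ is small; quantitatively, either $\mu_{j_0+1} < 1/(\de_K e)$, in which case $\mu_1 \le \mu_{j_0+1}$ is also small and the lower bound on $v$ forces $\hat j_0 \le j_0 - 1$ so that $\Si_{\mu^{|\infty}}(\hat j_0+2,N) \ge 2\de_K e$, or $\mu_{j_0+1} \ge 1/(\de_K e)$, in which case the finitely many summands $1/\mu_i$ lost in passing from the index $j_0+1$ to $\hat j_0+2$ are controlled by $\de_K e$ and can be absorbed into the surplus. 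Checking this case split is the main technical obstacle.

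The last assertion of the proposition is then immediate: at any non-critical point of $f$ on $\ell$, $\nabla f \ne 0$ and $f \in \cC^N$, so the implicit function theorem shows that the level set $f^{-1}(c)$ is locally a $\cC^N$-submanifold.
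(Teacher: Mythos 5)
Your route is genuinely different from the paper's. The paper keeps $g$ itself: after reducing (via subtracting $c := \min_{\ell\cap K} f$, where the hypothesis $V(f|_\ell)\ge 2b$ is used) to the case that $g$ has a zero $s_0$, it builds the Rolle chain through $g, g', g'', \ldots$ starting from $s_0$ and inserts the reference point $s_{-1}$ with $|g(s_{-1})|>b/2$; the factor $2$ in $\fd_{2\de_K\mu^{|\infty}}$ is there because $s_{-1}$ may lie to the right of $s_0$, so the telescoping sum $\sum_j|s_j-s_{j-1}|$ is bounded by $2$ rather than $1$. You instead pass to $h := g'$, obtain a point $\tau$ with $|h(\tau)|$ large by the mean value theorem, and apply Bang to $h$ with the shifted weight $\hat M_0 = \de_K M_1$, $\hat\mu_j = \de_K\mu_{j+1}$; in your argument the factor $2$ has to absorb the resulting index shift $j_0\to\hat j_0$. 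Both routes are viable; yours trades the ``backtracking'' issue for a weight shift.

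There is, however, a gap in Case (b) of your dichotomy, which you yourself flag as unchecked, and the case split is pegged to the wrong variable. What governs the shift is $\de_K\mu_1 = \hat M_0/M_0$, not $\mu_{j_0+1}$; the only link to $\mu_{j_0+1}$ is the inequality $\mu_1\le\mu_{j_0+1}$. More seriously, in Case (b) the observation that ``each lost summand $1/\mu_i$ is at most $\de_K e$'' does not bound the lost sum $L := \Si_{\mu^{|N}}(j_0+1,\hat j_0+1)$, because the number of lost terms is of order $\log(\de_K\mu_1)$ and may well exceed $1$ while each term stays comparable to $\de_K e$. The estimate can be salvaged: set $A := \de_K\mu_1$. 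If $A\le\tfrac{4}{3e}$, then $\log v^{-1}\le\log\tfrac{2M_0}{b}+\log\tfrac{3A}{4}\le j_0-1$, so $\hat j_0\le j_0-1$ and $L=0$. If $A>\tfrac{4}{3e}$, then $\hat j_0+1-j_0\le\log\tfrac{3A}{4}+2$ and each $1/\mu_j\le 1/\mu_1=\de_K/A$, hence $L\le(\log\tfrac{3A}{4}+2)\,\de_K/A<\de_K e$, using the elementary inequality $\log\tfrac{3A}{4}+2<eA$ on that range. With this repair your argument closes (note that $\Si_{\mu^{|N}}(\hat j_0+2,N)\ge\de_K e$ actually forces $\fd_{\hat\mu^{|N-1}}(v)\le N-2$, since $\hat\mu_j=\infty$ for $j\ge N$, so you even get the marginally sharper bound $2(N-2)$).
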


Notice the factor $2$ in \eqref{eq:multdimcondcrit} and in the definition of $\nn$.

\begin{proof}
   Let $\ell$ be an affine line that meets $B$.
   If no zero of $f$ lies on $\ell$, we may assume without loss of generality that
   $f$ is positive on $\ell$ and that
   $c:= \min_{\ell \cap K} f > 0$.
   The assumption
   $V(f|_\ell) \ge 2b$ guarantees that we can
   replace $f$ by $f-c$; clearly the set of critical points of $f$ and $f-c$ is the same.
   Indeed, cf.\ \Cref{rem:uniformity2},
   \[
   b\le \frac{V(f|_\ell)}{2} \le \|f-c\|_{\ell \cap K} = \max_{\ell \cap K} f-c \le M_0 -c \le M_0,
   \]
   and $\|f-c\|_{j,K} = \|f\|_{j,K}$ for all $j\ge 1$.
   Since $f-c$ has a zero on $\ell$, we may assume
   from now on that $f$ has a zero on $\ell$.

   Now we restrict $f$ to $\ell$ and work with the $(\de_K \mu^{|\nn+1},M_0)$-function $g : [0,1]\to \R$
   as in the proof of
   \Cref{prop:numberofzeros}.
   Since $\ell$ meets $B$, we have $\|g\|_{[0,1]}> \frac{b}{2}$.
   And $g$ has at least one zero $s_0$ in $[0,1]$, because $f$ vanishes on $\ell$.
   As in \Cref{prop:numberofzeros},
   we see that the number of critical points of $g$ left and right of $s_0$ is finite and bounded by $\nn$.
   Since each critical point of $f$ on $\ell$ is a critical point of $g$, the proof is complete.

   Let us explain in more detail why we need the factor $2$ in the definition of $\nn$.
   To this end, assume that $t_1 \le t_2 \le \cdots \le t_{m_r}$ is an increasing enumeration of the critical points of $g$ that lie to the right of $s_0$.
   By Rolle's theorem, we find $t_1 = s_1 \le s_2 \le \cdots \le s_{m_r-1}$ such that $g^{(j)}(s_j)=0$ for $0\le j \le m_r-1$.
   Choose $s_{-1} \in [0,1]$ such that $|g(s_{-1})| > \frac{b}{2}$.
   If $s_{-1}< s_0$, we find, as in the proof of \Cref{cor:Bang1},
   \[
     1 \ge \sum_{j= 0}^{m_r-1} |s_j - s_{j-1}| > \frac{1}{e} \Si_{\de_K \mu^{|\infty}}(j_0+1,m_r),
   \]
   whence $m_r \le \fd_{\de_K \mu^{|\infty}}(\frac{b}{2M_0})$.
   If $s_{-1}> s_0$, then
   \[
    \sum_{j= 0}^{m_r-1} |s_j - s_{j-1}| = (s_{-1}-s_0) + \sum_{j=1}^{m_r-1} (s_j - s_{j-1}) \le 2
   \]
   so that we may conclude that $m_r \le \fd_{2 \de_K \mu^{|\infty}}(\frac{b}{2M_0})$.
   In any case $m_r \le \nn$.
\end{proof}

\begin{remark}
  If $M$ has additional regularity properties so that the implicit function theorem holds in the
  associated Denjoy--Carleman class, then the submanifolds are of the respective Denjoy--Carleman class; see e.g.\
  \cite{RainerSchindl14}.
\end{remark}

A comprehensive quantitative study of the critical and near-critical values of differentiable mappings
can be found in \cite{Yomdin:1983wg}.

\subsection*{Acknowledgements}
Part of the work on this paper has been done at the Fields Institute in Toronto, Canada, during the
\emph{Thematic Program on Tame Geometry, Transseries and Applications to Analysis and Geometry (January 1 -- June 30, 2022)}.
I am grateful for the kind hospitality and the excellent working conditions.
Also I thank A. Debrouwere and B. Prangoski for pointing out a mistake in an earlier version of the paper.
The author was supported by the Austrian Science Fund (FWF), Project P 32905-N.


\def\cprime{$'$}
\providecommand{\bysame}{\leavevmode\hbox to3em{\hrulefill}\thinspace}
\providecommand{\MR}{\relax\ifhmode\unskip\space\fi MR }
\providecommand{\MRhref}[2]{%
  \href{http://www.ams.org/mathscinet-getitem?mr=#1}{#2}
}
\providecommand{\href}[2]{#2}

\end{document}